\newtheorem{theorem}{Theorem}
\newtheorem{definition}[theorem]{Definition}
\newtheorem{lemma}[theorem]{Lemma}
\newcommand{\V}{\widetilde{H}^{s}\left(\Omega\right)}
\newcommand{\mat}[1]{\boldsymbol{#1}}
\newcommand{\dd}{\; d}
\newcommand{\abs}[1]{\left|#1\right|}
\newcommand{\norm}[1]{\left|\!\left|#1\right|\!\right|}
\newcommand{\restr}[2]{{
    \left.\kern-\nulldelimiterspace 
      #1 
      \vphantom{\big|} 
    \right|_{#2} 
  }}
\newcommand{\grad}{\nabla}
\newcommand{\dist}[1]{\operatorname{dist}\left(#1\right)}
\newcommand{\diam}[1]{\operatorname{diam}\left(#1\right)}
\begin{document}

\begin{frontmatter}
  \title{Aspects of an adaptive finite element method for the fractional Laplacian: a priori and a posteriori error estimates, efficient implementation and multigrid solver\tnoteref{dedication}\tnoteref{tn}}

  \tnotetext[dedication]{This paper is dedicated to Professor John Tinsley Oden on the occasion of his \nth{80} birthday}
  \tnotetext[tn]{This work was supported by the MURI/ARO on ``Fractional PDEs for Conservation Laws and Beyond: Theory, Numerics and Applications'' (W911NF-15-1-0562).}

  \author[brown,oakridge]{Mark Ainsworth}
  \ead{mark\_ainsworth@brown.edu}
  \author[brown]{Christian Glusa}
  \ead{christian\_glusa@brown.edu}

  \address[brown]{Division of Applied Mathematics, Brown University, 182 George St, Providence, RI 02912, USA}
  \address[oakridge]{Computer Science and Mathematics Division, Oak Ridge National Laboratory, Oak Ridge, TN 37831, USA}

\begin{abstract}
  We develop all of the components needed to construct an adaptive finite element code that can be used to approximate fractional partial differential equations, on non-trivial domains in \(d\geq 1\) dimensions.
  Our main approach consists of taking tools that have been shown to be effective for adaptive boundary element methods and, where necessary, modifying them so that they can be applied to the fractional PDE case.
  Improved a priori error estimates are derived for the case of quasi-uniform meshes which are seen to deliver sub-optimal rates of convergence owing to the presence of singularities.
  Attention is then turned to the development of an a posteriori error estimate and error indicators which are suitable for driving an adaptive refinement procedure.
  We assume that the resulting refined meshes are locally quasi-uniform and develop efficient methods for the assembly of the resulting linear algebraic systems and their solution using iterative methods, including the multigrid method.
  The storage of the dense matrices along with efficient techniques for computing the dense matrix vector products needed for the iterative solution is also considered.
  The performance and efficiency of the resulting algorithm is illustrated for a variety of examples.
\end{abstract}

  \begin{keyword}
    non-local equations \sep fractional Laplacian \sep adaptive refinement

    \MSC 65N22, 65N30, 65N38, 65N55
  \end{keyword}
\end{frontmatter}

\section{Introduction}
\label{sec:introduction}

Although the earliest works of J.T. Oden on finite element analysis date back to the 1960s, they feature the application of what was, at the time, a relatively poorly understood numerical method to problems that are regarded as challenging even by today's standards including: large deformation elasticity~\cite{OdenSato1967_FiniteStrainsDisplacementsElastic,Oden1966_AnalysisLargeDeformationsElastic}, pneumatic structures~\cite{OdenKubitza1967_NumericalAnalysisNonlinearPneumaticStructures}, thermoelasticity~\cite{Oden1969_FiniteElementAnalysisNonlinear}, fluid flow~\cite{Oden1970_FiniteElementAnalogueNavierStokesEquation}  and incompressible elasticity~\cite{OdenKey1970_NumericalAnalysisFiniteAxisymmetric}.
The analysis and application of the finite element method has come a long way in the intervening 50 years, but \emph{plus \c{c}a change, plus c'est la m\^eme chose} and one still finds the name of J.T. Oden at the cutting edge of finite element analysis with applications to tumour growth, atomistic modelling of solids and problems with multiple scales.

Oden and coworkers \cite{DemkowiczOdenEtAl1989_TowardUniversalHpAdaptive1,OdenDemkowiczEtAl1989_TowardUniversalHpAdaptive2} also promoted the use of adaptive finite element methods incorporating a posteriori error estimation and error control~\cite{AinsworthOden2011_PosterioriErrorEstimationFiniteElementAnalysis}, automatic mesh refinement and adaptivity~\cite{DemkowiczOdenEtAl1989_TowardUniversalHpAdaptive1} and efficient solvers for the result algebraic systems~\cite{ToselliWidlund2005_DomainDecompositionMethods}.
The efficiency and flexibility of adaptive finite element methods, and other current day computational techniques, opens up the possibility for the practical utilisation of ever more sophisticated mathematical models.

Recent years have witnessed a rapid increase in the use of \emph{non-local} and \emph{fractional order} models in which classical pointwise integer-order derivatives are replaced with fractional order, non-local, derivatives.
Fractional equations are used to describe phenomena in anomalous diffusion, material science, image processing, finance and electromagnetic fluids \cite{West2016_FractionalCalculusViewComplexity}.
Partial differential equations involving fractional order operators also arise naturally as the limit of discrete diffusion governed by stochastic processes \cite{MeerschaertSikorskii2012_StochasticModelsFractionalCalculus}, in the same way as standard diffusion equations arise from Brownian random walks.

Computational methods available for the numerical resolution of models involving fractional derivatives in two or more dimensions are relatively scarce, and the efficient solution of fractional equations posed on complex domains is a problem of considerable practical interest.

Our objective in the present work is to develop all of the components needed to construct an adaptive finite element code that can be used to approximate fractional partial differential equations, or, more precisely the integral fractional Laplacian, on non-trivial domains in \(d\geq 1\) dimensions.
There are a number of important differences in applying the finite element method to fractional equations as opposed to equations involving only integer-order derivatives owing to the fact that operators are non-local and involve singular integrals.
This means that (a) the computation of the entries in the stiffness matrix is non-trivial and (b) the stiffness matrix will be dense in addition to suffering from the same kind of ill-conditioning seen in the integer-order case.
Moreover, the solutions of fractional equations are inherently singular.
The solution has singularities even in the case when the domain is smooth (e.g. a disc) and the data is constant.
The non-smooth nature of the solutions mandates the use of adaptive solution algorithms.
Adaptive mesh refinement procedures require appropriate a posteriori error estimators which is also rather less straightforward than for the integer-order case.
The present work tackles each of these issues including the efficient solution of the resulting linear algebraic equations.
Fortunately, our task is alleviated considerably by realising that many of the issues described above which apply to the fractional order PDEs are similar to issues that arise in the use adaptive boundary element methods.
Our main approach consists of taking tools that have been shown to be effective for adaptive boundary element methods and, where necessary, modifying them so that they can be applied to the fractional PDE case.

In the present work, we refer to the integral fractional Laplacian (introduced in \Cref{sec:notation}) simply as the fractional Laplacian.
It is noteworthy though that this is not the only possible definition of a fractional order Laplacian on a bounded domain.
The reader is referred to \cite{NochettoOtarolaEtAl2015_PdeApproachToFractional} for an introduction to the alternative spectral definition and a finite element discretization of the corresponding extension problem in \(d+1\) spatial dimensions.
It has been shown in \cite{ServadeiValdinoci2014_SpectrumTwoDifferentFractionalOperators} that the two fractional operators are in fact not the same.

The remainder of this article is structured as follows: In \Cref{sec:notation}, we introduce the required Sobolev spaces and the weak form of the fractional Laplacian.
Regularity of solutions to the fractional Poisson problem and finite element discretization are discussed in \Cref{sec:regul-conv-theory}.
We improve upon previous a priori error estimates given in \cite{AcostaBorthagaray2015_FractionalLaplaceEquation} and determine the rate of convergence on quasi-uniform meshes.
As expected, the resulting rates are sub-optimal owing to the presence of singularities and we therefore turn our attention in \Cref{sec:adapt-mesh-refin} to the development of suitable a posteriori error estimate and error indicators.
This leads us to consider the case of non-quasi-uniform, locally refined meshes that arise from an adaptive mesh refinement procedure.
The assembly of the resulting linear algebraic systems and their solution using iterative methods, including conjugate gradient and the multigrid method, is discussed in \Cref{sec:solut-syst-involv}.
We show that, similar to the integer-order case, multigrid outperforms conjugate gradient as a solver for the fractional Poisson problem, but, under some conditions on the size of the time-step, CG can be used effectively for time-dependent problems.
In \Cref{sec:cluster-method}, we discuss the computation of the matrix entries using variable-order Gauss-type quadrature rules and the storage of the dense matrices along with efficient techniques for computing the dense matrix vector products needed for the iterative solution.
We outline the use of a cluster panelling method applied to the discretized fractional operator, and describe the efficient evaluation of the proposed residual-based error indicators.
Finally, in \Cref{sec:numerical-examples}, we give a variety of numerical examples in one and two dimensions to illustrate that assembly of the stiffness matrix, solution of the system and computation of the error indicators can be achieved in quasi-optimal complexity.
The optimal rates of convergence are shown to be attained in a series of examples involving a variety of domains and as well as right-hand sides with varying regularity.
We illustrate how symmetries or periodicity in the shape of the domain and right-hand side can be exploited to reduce the size of the linear system, and demonstrate the effect of non-locality for a domain consisting of two disconnected components.

The current work extends our previous work presented in \cite{AinsworthGlusa2017_TowardsEfficientFiniteElement} in which we considered the case of uniformly refined meshes.

\section{Definition of the fractional Laplacian and Notation}
\label{sec:notation}

The \emph{fractional Laplacian} of a function \(u: \mathbb{R}^{d}\mapsto \mathbb{R}\) is defined as
\begin{align*}
  \left(-\Delta\right)^{s} u\left(\vec{x}\right) = C(d,s) \operatorname{p.v.} \int_{\mathbb{R}^{d}} \dd \vec{y} ~ \frac{u(\vec{x})-u(\vec{y})}{\abs{\vec{x}-\vec{y}}^{d+2s}}
\end{align*}
where
\begin{align*}
  C(d,s) = \frac{2^{2s}s\Gamma\left(s+\frac{d}{2}\right)}{\pi^{d/2}\Gamma\left(1-s\right)}
\end{align*}
is a normalisation constant and \(\operatorname{p.v.}\) denotes the Cauchy principal value of the integral \cite[Chapter 5]{Mclean2000_StronglyEllipticSystemsBoundaryIntegralEquations}.
The expression for the fractional Laplacian can directly be derived from the Fourier representation \(\left(-\Delta\right)^{s}=\mathcal{F}^{-1}\abs{\vec{\xi}}^{2s}\mathcal{F}\).
In the case where \(s=1\) the operator coincides with the usual Laplacian.
If \(\Omega\subset\mathbb{R}^{d}\) is a bounded Lipschitz domain, we define the \emph{integral fractional Laplacian} \(\left(-\Delta\right)^{s}\) to be the restriction of the full-space operator to functions with compact support in \(\Omega\).
The \emph{fractional Poisson problem} is then given by
\begin{align}
  \left(-\Delta\right)^{s}u = f &\text{ in }\Omega,\label{eq:fracPoisson}\\
  u=0&\text{ in }\Omega^{c} \nonumber
\end{align}
This generalises the homogeneous integer order Poisson problem from the case \(s=1\) to the case \(s\in(0,1)\).

A related operator on \(\Omega\) is the \emph{regional fractional Laplacian} \cite{BogdanBurdzyEtAl2003_CensoredStableProcesses,ChenKim2002_GreenFunctionEstimateCensoredStableProcesses}
\begin{align*}
  \left(-\Delta\right)_{R}^{s} u\left(\vec{x}\right) = C(d,s) \operatorname{p.v.} \int_{\Omega} \dd \vec{y} ~ \frac{u(\vec{x})-u(\vec{y})}{\abs{\vec{x}-\vec{y}}^{d+2s}},
\end{align*}
which generalises the Laplacian with homogeneous Neumann boundary condition to the fractional order case.
It will be seen that the techniques developed below for the integral fractional Laplacian also apply for the regional fractional Laplacian.

In practice, we are usually interested in dynamic behaviour rather than steady state problems.
The archetype of a time-dependent problem is the \emph{fractional heat equation}
\begin{align}
  \partial_{t}u+\left(-\Delta\right)^{s}u=f & \text{ in }[0,T]\times\Omega, \label{eq:fractionalHeat}\\
  u=u_{0} & \text{ in }\left\{0\right\}\times\Omega.\nonumber
\end{align}
Equally well, we might consider the fractional heat equation with the regional fractional Laplacian instead of the integral one, if a Neumann boundary condition is more appropriate.

Define the usual fractional Sobolev space \(H^{s}\left(\mathbb{R}^{d}\right)\) via the Fourier transform.
If \(\Omega\) is a sub-domain as above, then we define the Sobolev space \(H^{s}\left(\Omega\right)\) to be \cite{Mclean2000_StronglyEllipticSystemsBoundaryIntegralEquations}
\begin{align*}
   H^{s}\left(\Omega\right)&:=\left\{u\in L^{2}\left(\Omega\right) \mid \norm{u}_{H^{s}\left(\Omega\right)} < \infty\right\},
\end{align*}
equipped with the norm
\begin{align*}
  \norm{u}_{H^{s}\left(\Omega\right)}^{2}&= \norm{u}_{L^{2}\left(\Omega\right)}^{2} + \int_{\Omega}\dd \vec{x} \int_{\Omega}\dd \vec{y} \frac{\left(u(\vec{x})-u(\vec{y})\right)^{2}}{\abs{\vec{x}-\vec{y}}^{d+2s}}.
\end{align*}
The space
\begin{align*}
  \V&:=\left\{u\in H^{s}\left(\mathbb{R}^{d}\right) \mid u=0 \text{ in } \Omega^{c}\right\}
\end{align*}
can be equipped with the energy norm
\begin{align*}
  \norm{u}_{\V} := a\left(u,u\right)^{1/2}=\sqrt{\frac{C(d,s)}{2}}\abs{u}_{H^{s}\left(\mathbb{R}^{d}\right)},
\end{align*}
which coincides with the usual fractional Sobolev norm apart from the non-standard factor \(\sqrt{C(d,s)/2}\).
For \(s>1/2\), \(\V\) coincides with the space \(H_{0}^{s}\left(\Omega\right)\) which is the closure of \(C_{0}^{\infty}\left(\Omega\right)\) with respect to the \(H^{s}\left(\Omega\right)\)-norm, whilst for \(s<1/2\), \(\V\) is identical to \(H^{s}\left(\Omega\right)\).
In the critical case \(s=1/2\), \(\V\subset H^{s}_{0}\left(\Omega\right)\), and the inclusion is strict.
(See for example \cite[Chapter 3]{Mclean2000_StronglyEllipticSystemsBoundaryIntegralEquations}.)

The fractional Poisson problem takes the variational form \cite{AinsworthGlusa2017_TowardsEfficientFiniteElement}
\begin{align}
  \text{Find } u\in\V : \quad a\left(u,v\right)=\left\langle f,v\right\rangle \quad \forall v\in\V, \label{eq:fracPoissonVariational}
\end{align}
where
\begin{align*}
  a(u,v)
  &=\frac{C(d,s)}{2} \int_{\Omega} \dd\vec{x} \int_{\Omega}\dd\vec{y}  \frac{\left(u\left(\vec{x}\right)-u\left(\vec{y}\right)\right)\left(v\left(\vec{x}\right)-v\left(\vec{y}\right)\right)}{\abs{\vec{x}-\vec{y}}^{d+2s}} \\
  &\quad+ \frac{C(d,s)}{2s} \int_{\Omega} \dd \vec{x} \int_{\partial\Omega} \dd \vec{y} \frac{u\left(\vec{x}\right) v\left(\vec{x}\right) ~ \vec{n}_{\vec{y}}\cdot\left(\vec{x}-\vec{y}\right)}{\abs{\vec{x}-\vec{y}}^{d+2s}}
\end{align*}
and where \(\vec{n}_{y}\) is the \emph{inward} normal to \(\partial\Omega\) at \(\vec{y}\).
The bilinear form associated with the regional fractional Laplacian only differs from \(a\left(\cdot,\cdot\right)\) in that the boundary term is missing.

The approximation of the integral fractional Laplacian using finite elements was considered by D'Elia and Gunzburger \cite{DEliaGunzburger2013_FractionalLaplacianOperatorBounded}.
The important work of \citet{Grubb2015_FractionalLaplaciansDomainsDevelopment} gave regularity results for the analytic solution of the fractional Poisson problem and \citet{AcostaBorthagaray2015_FractionalLaplaceEquation} obtained convergence rates for the finite element approximation supported by numerical examples computed using techniques described in \cite{AcostaBersetcheEtAl2016_ShortFeImplementation2d}.

\section{Regularity and Finite Element Discretization}
\label{sec:regul-conv-theory}

The existence of a unique solution to the fractional Poisson problem \cref{eq:fracPoissonVariational} (and its subsequent finite element approximation) follows from the Lax-Milgram Lemma.
%
The regularity of the solution was recorded in \cite{AcostaBersetcheEtAl2016_ShortFeImplementation2d} as a special case of a result by \citet{Grubb2015_FractionalLaplaciansDomainsDevelopment}:

\begin{theorem}[\cite{Grubb2015_FractionalLaplaciansDomainsDevelopment,AcostaBersetcheEtAl2016_ShortFeImplementation2d}]\label{thm:regularity}
  Let \(\partial\Omega\in C^{\infty}\), \(f\in H^{r}\left(\Omega\right)\) for \(r\geq-s\) and \(u\in\V\) be the solution of the fractional Poisson problem \eqref{eq:fracPoissonVariational}.
  Then the following regularity estimate holds:
  \begin{align*}
    u\in
    \begin{cases}
      H^{2s+r}\left(\Omega\right) & \text{if }0<s+r<1/2, \\
      H^{s+1/2-\varepsilon}\left(\Omega\right)~ \forall\varepsilon>0 & \text{if }1/2\leq s+r. \\
    \end{cases}
  \end{align*}
\end{theorem}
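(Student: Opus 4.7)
The plan is to appeal to the $\mu$-transmission pseudo-differential calculus of Grubb with $\mu = s$, viewing $\left(-\Delta\right)^{s}$ as a classical order-$2s$ pseudo-differential operator on $\mathbb{R}^{d}$ with symbol $\abs{\vec{\xi}}^{2s}$. The goal is to describe the Dirichlet realization on the smooth domain $\Omega$ precisely enough that the Sobolev regularity of $u$ can be read off from that of $f$. Well-posedness in $\V$ is already guaranteed by Lax--Milgram (the hypothesis $r \geq -s$ ensures $f \in H^{r}\left(\Omega\right) \subset \V^{*}$), so the issue is purely one of interior/boundary regularity.

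First I would reduce to a half-space model problem. Using a smooth partition of unity and a local straightening of $\partial\Omega$ (which is permissible since $\partial\Omega \in C^{\infty}$), localize the equation near a boundary point so that it becomes a problem on $\mathbb{R}^{d}_{+} = \left\{x_{d} > 0\right\}$ with zero extension to $x_{d} \leq 0$. Tangential Sobolev regularity is inherited from $f$ by commuting $\left(-\Delta\right)^{s}$ with tangential translations, so the essential difficulty is the behavior in the normal variable $x_{d}$ near the boundary.

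Next I would invoke the transmission calculus to obtain a boundary expansion
\begin{align*}
  u\left(\vec{x}\right) = \delta\left(\vec{x}\right)^{s} v\left(\vec{x}\right) + w\left(\vec{x}\right),
\end{align*}
where $\delta\left(\vec{x}\right)$ is the distance to $\partial\Omega$, $v$ is smooth up to the boundary whenever $f$ is smooth, and $w$ is $2s$ orders smoother than $f$ in the appropriate transmission-space scale. This is proved by factorizing the principal symbol in the normal variable into ``plus'' and ``minus'' factors and solving the resulting model equation on $\mathbb{R}_{+}$ explicitly. Once the expansion is in hand, a direct Fourier computation shows that $x_{d}^{s}\chi\left(\vec{x}\right)$ with $\chi \in C_{c}^{\infty}\left(\mathbb{R}^{d}\right)$ lies in $H^{\sigma}\left(\mathbb{R}^{d}\right)$ for every $\sigma < s + 1/2$ but fails to lie in $H^{s+1/2}\left(\mathbb{R}^{d}\right)$. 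Combining the two contributions yields the stated dichotomy: when $s + r < 1/2$ we have $2s + r < s + 1/2$ and the singular prefactor $\delta^{s}$ is automatically in $H^{2s+r}$, so the natural gain of $2s$ orders is realized; when $s + r \geq 1/2$ the $\delta^{s}$ term saturates the regularity at $H^{s+1/2-\varepsilon}$ for every $\varepsilon > 0$.

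The main obstacle is the boundary expansion in the second step. The non-locality of $\left(-\Delta\right)^{s}$ prevents using a classical local trace operator to encode the Dirichlet condition on $\Omega^{c}$, so the boundary behavior must be built into the symbol calculus itself via the $\mu$-transmission framework. Verifying that $v$ is genuinely smooth up to the boundary, rather than merely Hölder continuous as in weaker Ros-Oton--Serra type estimates, requires the full machinery of Hörmander's transmission spaces adapted to non-integer order; this is where the smoothness hypothesis $\partial\Omega \in C^{\infty}$ is essential.
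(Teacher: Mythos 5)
The paper does not give a proof of \Cref{thm:regularity}: the result is stated as a citation to Grubb's $\mu$-transmission analysis (as specialised in Acosta--Bersetche et al.), and the text moves on. Your sketch is a faithful high-level account of precisely that cited argument — localisation and boundary straightening, Wiener--Hopf factorisation of the normal symbol, the boundary expansion $u=\delta^{s}v+w$ with $v$ smooth up to $\partial\Omega$, and the elementary computation that $\delta^{s}$ sits in $H^{\sigma}$ exactly for $\sigma<s+1/2$, which gives the dichotomy at $s+r=1/2$ — so you are reproducing the same route the paper defers to rather than proposing an alternative. The one point worth tightening is the phrase ``$w$ is $2s$ orders smoother than $f$'': in Grubb's framework the precise statement is $u\in H^{s(2s+r)}(\overline{\Omega})$ in the $\mu$-transmission Sobolev scale, and the embedding of $H^{s(\sigma)}(\overline{\Omega})$ into the ordinary $H^{\sigma}(\Omega)$ holds only for $\sigma<s+1/2$; it is this embedding threshold, not the $\delta^{s}$ factor alone, that produces the two cases in the theorem, so the bookkeeping should be phrased in terms of the transmission spaces rather than an informal ``$2s$ orders smoother.''
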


The result shows that increasing the regularity of the right-hand side only results in a corresponding increase of the regularity of the solution, as long as \(r<1/2-s\).
We contrast the result with the standard ``lifting property'' of the integer order Laplacian \cite[Theorem 3.10]{ErnGuermond2004_TheoryPracticeFiniteElements}, whereby  on smooth domains the solution \(u\in H^{r+2}\left(\Omega\right)\) whenever \(f\in H^{r}\left(\Omega\right)\) for all \(r\).
Examining a simple case reveals why the lifting property does not extend to fractional order.
Take \(\Omega=B(0,1)\subset\mathbb{R}^{2}\), with constant right-hand side \(f=2^{2s}\Gamma\left(1+s\right)^{2}\), so that the true solution is \(u\left(\vec{x}\right)=\left(1-\abs{\vec{x}}^{2}\right)_{+}^{s}\), where \(y_{+}=\max\left(y,0\right)\) \cite{Getoor1961_FirstPassageTimesSymmetric,DydaKuznetsovEtAl2016_FractionalLaplaceOperatorMeijerGFunction}.
Observe that \(u\) is non-smooth in the neighbourhood of the boundary, owing to the presence of a term corresponding to the fractional power of the distance to the boundary.
(See \Cref{fig:solutions}.)
\begin{figure}
  \centering
  \includegraphics{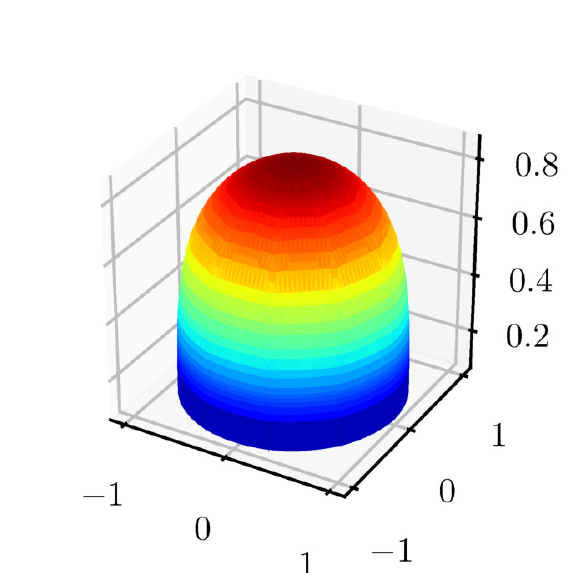}
  \includegraphics{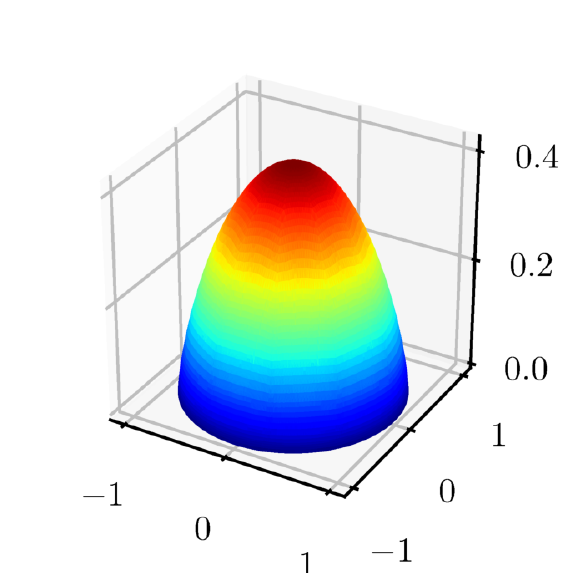}
  \caption{Solutions \(u\) corresponding to the constant right-hand side for  \(s=0.25\) and for \(s=0.75\).}
  \label{fig:solutions}
\end{figure}
Solutions of the fractional Poisson problem typically contain a term of the form \(\delta\left(\vec{x}\right)^{s}\in H^{s+1/2-\varepsilon}\), where \(\delta\left(\vec{x}\right)\) is the distance from a point \(\vec{x}\) to the boundary \(\partial\Omega\), which is not smooth when \(s\) is fractional.
Of course, for polygonal domains, the infinite lifting property does not hold in the integer order case either \cite[Theorem 3.11]{ErnGuermond2004_TheoryPracticeFiniteElements}.

Henceforth, let \(\Omega\) be a polygon, and let \(\mathcal{P}_{h}\) be a family of shape-regular and locally quasi-uniform triangulations of \(\Omega\) \cite{ErnGuermond2004_TheoryPracticeFiniteElements}, and let
\begin{align*}
  \partial\mathcal{P}_{h}=\left\{\text{edge }e\mid \exists K \in \mathcal{P}_{h}: e\subset \partial K\cap\partial\Omega \right\}
\end{align*}
be the ``trace'' of the interior mesh.
Let \(\mathcal{N}_{h}\) be the set of vertices of \(\mathcal{P}_{h}\), \(h_{K}\) be the diameter of the element \(K\in\mathcal{P}_{h}\), and \(h_{e}\) be the diameter of \(e\in\partial\mathcal{P}_{h}\).
Moreover, let
\begin{align*}
  h:=\max_{K\in\mathcal{P}_{h}}h_{K}, \\
  h_{\min}:=\min_{K\in\mathcal{P}_{h}}h_{K}, \\
  h_{\partial}:=\max_{e\in\partial\mathcal{P}_{h}}h_{e}.
\end{align*}
Let \(\phi_{i}\) be the usual piecewise linear Lagrange basis function associated with a node \(\vec{z}_{i}\in\mathcal{N}_{h}\), satisfying \(\phi_{i}\left(\vec{z}_{j}\right)=\delta_{ij}\) for \(\vec{z}_{j}\in\mathcal{N}_{h}\), and let \(X_{h}:=\operatorname{span}\left\{\phi_{i}\mid \vec{z}_{i}\in\mathcal{N}_{h}\right\}\).
The finite element subspace \(V_{h}\subset\V\) is given by \(V_{h}=X_{h}\) when \(s<1/2\) and by
\begin{align*}
  V_{h} = \left\{v_{h}\in X_{h}\mid v_{h}=0 \text{ on }\partial\Omega\right\} = \operatorname{span}\left\{\phi_{i}\mid \vec{z}_{i}\not\in \partial\Omega\right\}
\end{align*}
when \(s\geq 1/2\).
The corresponding set of degrees of freedom \(\mathcal{I}_{h}\) for \(V_{h}\) is given by \(\mathcal{I}_{h}=\mathcal{N}_{h}\) when \(s<1/2\) and otherwise consists of nodes in the interior of \(\Omega\).
In both cases we denote the cardinality of \(\mathcal{I}_{h}\) by \(n\).
The set of degrees of freedom on an element \(K\in\mathcal{P}_{h}\) is denoted by \(\mathcal{I}_{K}\).

The stiffness matrix associated with the fractional Laplacian is defined to be \(\mat{A}^{s}=\left\{a\left(\phi_{i}, \phi_{j}\right)\right\}_{i,j}\), where
\begin{align*}
  a\left(\phi_{i},\phi_{j}\right)
  &= \frac{C(d,s)}{2} \int_{\Omega} \dd \vec{x} \int_{\Omega} \dd \vec{y} \frac{\left(\phi_{i}\left(\vec{x}\right)-\phi_{i}\left(\vec{y}\right)\right)\left(\phi_{j}\left(\vec{x}\right)-\phi_{j}\left(\vec{y}\right)\right)}{\abs{\vec{x}-\vec{y}}^{d+2s}} \\
  &\quad+ \frac{C(d,s)}{2s} \int_{\Omega} \dd \vec{x} \int_{\partial\Omega} \dd \vec{y} \frac{\phi_{i}\left(\vec{x}\right) \phi_{j}\left(\vec{x}\right) ~ \vec{n}_{\vec{y}}\cdot\left(\vec{x}-\vec{y}\right)}{\abs{\vec{x}-\vec{y}}^{d+2s}}.
\end{align*}
Let \(u_{h}\in V_{h}\) be the finite element solution.
Using C\'ea's Lemma and the compactness of the embedding \(\V \hookrightarrow H^{s}\left(\Omega\right)\) (see \cite{AcostaBorthagaray2015_FractionalLaplaceEquation}), we deduce that
\begin{align}
  \norm{u-u_{h}}_{\V}&\leq C\inf_{v_{h}\in V_{h}}\norm{u-v_{h}}_{\V} \leq C\inf_{v_{h}\in V_{h}}\norm{u-v_{h}}_{H^{s}\left(\Omega\right)}, \label{eq:11}
\end{align}
which can be used to obtain an a priori error estimate by making a suitable choice of \(v_{h}\in V_{h}\).
To this end, we consider the Scott-Zhang interpolation operator with respect to the fractional \(\norm{\cdot}_{H^{s}\left(\Omega\right)}\)-norm.
For \(u\in H^{\ell}\left(\Omega\right)\), \(\ell>1/2\), the Scott-Zhang interpolation operator \cite{ScottZhang1990_FiniteElementInterpolationNonsmooth} is defined by
\begin{align*}
  \Pi_{h}u &= \sum_{i}\left(\int_{K_{i}}\psi_{i}\left(\vec{y}\right)u\left(\vec{y}\right)\dd \vec{y}\right)\phi_{i}.
\end{align*}
Here, \(K_{i}\) is either a simplex or sub-simplex containing the node \(\vec{z}_{i}\), and \(\left\{\psi_{i}\right\}_{i}\) is an \(L^{2}\left(K_{i}\right)\)-dual basis chosen such that \(\int_{K_{i}}\psi_{i}\left(\vec{y}\right)\phi_{j}\left(\vec{y}\right)\dd \vec{y}=\delta_{ij}\).
(For more details on the choice of \(K_{i}\) and properties of \(\Pi_{h}\), see \cite{ScottZhang1990_FiniteElementInterpolationNonsmooth}.)

\citet{AcostaBorthagaray2015_FractionalLaplaceEquation} showed that if \(u\in H^{\ell}\left(S_{K}\right)\), where \(S_{K}=\bigcup_{\tilde{K}\text{s.t. } \overline{K}\cap\overline{\tilde{K}}\not=\emptyset}\tilde{K}\) is the patch of an element \(K\in\mathcal{P}_{h}\), then the local approximation property
\begin{align*}
  \int_{K}\dd \vec{x} \int_{S_{K}} \dd \vec{y} \frac{\abs{\left(u-\Pi_{h}u\right)\left(\vec{x}\right) - \left(u-\Pi_{h}u\right)\left(\vec{y}\right)}^{2}}{\abs{\vec{x}-\vec{y}}^{d+2s}}
    &\leq C h_{K}^{2\ell-2s}\abs{u}_{H^{\ell}\left(S_{K}\right)}^{2}
\end{align*}
holds for \(0<s<\ell<1\) and for \(1/2<s<1\) and \(1<\ell<2\).
Below, we extend this result to include the case of \(0<s<1/2\) and \(1\leq\ell\leq2\) as well.
These extra cases are needed to take advantage of higher regularity of the solution in the interior of the domain.

\begin{lemma}\label{lem:approxProperty}
  If \(u\in H^{\ell}\left(S_{K}\right)\), for \(\ell\in (1/2,2]\) and \(0<s\leq\ell\), then
  \begin{align}
    \int_{K}\dd \vec{x} \int_{S_{K}} \dd \vec{y} \frac{\abs{\Pi_{h}u\left(\vec{x}\right) - \Pi_{h}u\left(\vec{y}\right)}^{2}}{\abs{\vec{x}-\vec{y}}^{d+2s}}
    &\leq C\left[h_{K}^{-2s}\norm{u}_{L^{2}\left(S_{K}\right)}^{2} \right.\nonumber\\
    &\left.\quad+ h_{K}^{2t-2s}\abs{u}_{H^{t}\left(S_{K}\right)}^{2}\right], \label{eq:5}\\
    \int_{K}\dd \vec{x} \int_{S_{K}} \dd \vec{y} \frac{\abs{\left(u-\Pi_{h}u\right)\left(\vec{x}\right) - \left(u-\Pi_{h}u\right)\left(\vec{y}\right)}^{2}}{\abs{\vec{x}-\vec{y}}^{d+2s}} \label{eq:6}
    &\leq C h_{K}^{2\ell-2s}\abs{u}_{H^{\ell}\left(S_{K}\right)}^{2}.
  \end{align}
\end{lemma}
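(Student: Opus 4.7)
The plan is to reduce both bounds to the reference configuration by affine scaling and then to exploit (i) local $L^{2}$-stability and polynomial preservation of the Scott--Zhang operator $\Pi_{h}$, (ii) equivalence of norms on the fixed finite-dimensional space of continuous piecewise linears over a reference patch, and (iii) a Bramble--Hilbert/Deny--Lions step on that patch. Shape-regularity and local quasi-uniformity imply that each patch $S_{K}$ is affinely equivalent to one of finitely many reference patches $\hat{S}_{K}$ of bounded diameter, and under this scaling the Gagliardo-type integral appearing in the lemma picks up a factor $h_{K}^{d-2s}$, while $\norm{w}_{L^{2}(S_{K})}^{2}$ and $\abs{w}_{H^{t}(S_{K})}^{2}$ scale as $h_{K}^{d}$ and $h_{K}^{d-2t}$ respectively. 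Hence both right-hand sides are dimensionally consistent and it suffices to prove scale-free versions on $\hat{S}_{K}$.

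For \eqref{eq:5}, since $\hat{\Pi}_{h}\hat{u}$ restricted to $\hat{S}_{K}$ lies in the fixed finite-dimensional space of continuous piecewise linears, all norms on this space are equivalent with constants depending only on the finitely many reference shapes. The Gagliardo-type integral on $\hat{K}\times\hat{S}_{K}$ is therefore controlled by $\norm{\hat{\Pi}_{h}\hat{u}}_{L^{2}(\hat{S}_{K})}^{2}$, and local $L^{2}$-stability of Scott--Zhang bounds the latter by $\norm{\hat{u}}_{L^{2}(\hat{S}_{K})}^{2}$. Scaling back gives the first term $h_{K}^{-2s}\norm{u}_{L^{2}(S_{K})}^{2}$. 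The additional $h_{K}^{2t-2s}\abs{u}_{H^{t}(S_{K})}^{2}$ term is the analogous estimate obtained from $H^{t}$-stability of Scott--Zhang (available for $t>1/2$ since the operator requires well-defined traces on sub-simplices); keeping both terms provides the flexibility needed when \eqref{eq:5} is later applied to $u-p$ for $p\in\mathbb{P}_{1}$.

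For \eqref{eq:6}, I would exploit polynomial preservation: $\Pi_{h}p=p$ for any $p\in\mathbb{P}_{1}(S_{K})$, so the integrand is unchanged on replacing $u$ by $u-p$. Dominating the integral on $K\times S_{K}$ by the symmetric integral on $S_{K}\times S_{K}$ and applying the triangle inequality yields
\begin{align*}
\text{LHS of \eqref{eq:6}} \;\leq\; 2\abs{u-p}_{H^{s}(S_{K})}^{2} + 2\int_{K}\dd\vec{x}\int_{S_{K}}\dd\vec{y}\,\frac{\abs{\Pi_{h}(u-p)(\vec{x})-\Pi_{h}(u-p)(\vec{y})}^{2}}{\abs{\vec{x}-\vec{y}}^{d+2s}}.
\end{align*}
The second term is controlled by \eqref{eq:5} applied to $u-p$, producing $C\bigl[h_{K}^{-2s}\norm{u-p}_{L^{2}(S_{K})}^{2}+h_{K}^{2t-2s}\abs{u-p}_{H^{t}(S_{K})}^{2}\bigr]$, while the first is handled by scaling together with a direct reference estimate for the symmetric seminorm. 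The main obstacle is the Bramble--Hilbert step on $\hat{S}_{K}$: choosing $\hat{p}\in\mathbb{P}_{1}$ such that $\norm{\hat{u}-\hat{p}}_{L^{2}(\hat{S}_{K})}$, $\abs{\hat{u}-\hat{p}}_{H^{t}(\hat{S}_{K})}$ and $\abs{\hat{u}-\hat{p}}_{H^{s}(\hat{S}_{K})}$ are \emph{simultaneously} bounded by $C\abs{\hat{u}}_{H^{\ell}(\hat{S}_{K})}$ for $\ell\in(1/2,2]$ and some $t\in(1/2,\ell]$. The Deny--Lions lemma on the star-shaped reference patch (e.g.\ via an averaged Taylor polynomial) delivers such a $\hat{p}$ with constants depending only on the reference geometry, so uniformity across the finitely many patch shapes is automatic. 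Scaling back collapses all terms to $Ch_{K}^{2\ell-2s}\abs{u}_{H^{\ell}(S_{K})}^{2}$, which is \eqref{eq:6}.
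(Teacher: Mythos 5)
Your overall strategy (Bramble--Hilbert, $\Pi_{h}p=p$, stability of $\Pi_{h}$) is the same as the paper's, but you route it through reference-patch scaling and norm equivalence where the paper stays on the physical patch and uses a sharp nodal decomposition of the Gagliardo seminorm of $\Pi_{h}u$ (from Ciarlet 2013) into factors $\norm{\psi_{i}}^2_{L^{\infty}(K_{i})}\norm{u}^2_{L^{1}(K_{i})}\abs{\phi_{i}}^2_{H^{s}(S_{K})}$, each of which is then estimated by an explicit power of $h_{K}$. That difference is cosmetic; the real issue is the stability claim.

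The gap: you invoke ``local $L^{2}$-stability of Scott--Zhang'' to get the first term of \eqref{eq:5}, then present the $H^{t}$-term as an extra obtained ``additionally'' from $H^{t}$-stability, kept only for later flexibility. This misrepresents the structure. The Scott--Zhang operator is \emph{not} locally $L^{2}$-stable. Its defining dual functionals on boundary nodes are integrals over sub-simplices (faces), i.e.\ trace evaluations, and these are not bounded on $L^{2}$; they are only bounded on $H^{t}$ for $t>1/2$. Consequently the correct local stability is the single coupled estimate of the form $\norm{\Pi_{h}u}_{L^{2}(K)}\leq C\bigl(\norm{u}_{L^{2}(S_{K})}+h_{K}^{t}\abs{u}_{H^{t}(S_{K})}\bigr)$, and the $\abs{u}_{H^{t}}$ term in \eqref{eq:5} is mandatory, not optional. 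A pure $L^{2}$ bound is false, so ``scaling back gives the first term $h_{K}^{-2s}\norm{u}_{L^{2}(S_{K})}^{2}$'' does not survive scrutiny. The paper handles this precisely by distinguishing the cases $K_{i}$ a full simplex (where \eqref{eq:elemL1K} gives an $L^{2}$ bound) from $K_{i}$ a sub-simplex (where \eqref{eq:elemL1e} necessarily produces both the $L^{2}$ and the $H^{t}$ contributions). If you restate the stability as a single coupled $L^{2}+H^{t}$ estimate rather than two separate stabilities, the rest of your argument for \eqref{eq:5} goes through. A lesser remark: shape-regular locally quasi-uniform meshes do not reduce patches $S_{K}$ to ``finitely many'' reference shapes (the geometry varies continuously); what scaling actually buys is uniform bounds on the constants, not a finite list of references. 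Your \eqref{eq:6} argument is otherwise aligned with the paper: $\Pi_{h}p=p$, triangle inequality, \eqref{eq:5} applied to $u-p$, and Bramble--Hilbert/Deny--Lions to pick $p$; the paper additionally splits into the cases $\ell<1$ (take $t=\ell$, constant $p$) and $\ell\in[1,2]$ (take $t=1$, affine $p$) and uses interpolation of operators to control $\abs{u-p}_{H^{s}}$, which you compress into a single Deny--Lions statement — that is fine provided one checks the needed $H^{s}$, $L^{2}$ and $H^{t}$ controls all hold for the chosen $t$.
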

\begin{proof}
  Similar to the proof in \cite{Ciarlet2013_AnalysisScott}, for \(t\in (1/2,\min\left\{1,\ell\right\}]\), we obtain
  \begin{align}
    \int_{K}\dd \vec{x} \int_{S_{K}} \dd \vec{y} \frac{\abs{\Pi_{h}u\left(\vec{x}\right) - \Pi_{h}u\left(\vec{y}\right)}^{2}}{\abs{\vec{x}-\vec{y}}^{d+2s}}
    &\leq C\sum_{i\in\mathcal{I}_{K}}\norm{\psi_{i}}_{L^{\infty}\left(K_{i}\right)}^{2}\norm{u}_{L^{1}(K_{i})}^{2}\abs{\phi_{i}}_{H^{s}\left(S_{K}\right)}^{2}. \label{eq:elemStab}
  \end{align}
  From \cite[Theorem 4.8]{AinsworthMcleanEtAl1999_ConditioningBoundaryElementEquations}, we obtain
  \begin{align}
    \abs{\phi_{i}}_{H^{s}\left(S_{K}\right)}^{2}\leq C h_{K}^{d-2s},\label{eq:elemHs}
  \end{align}
  while using \cite[Lemma 3.1]{ScottZhang1990_FiniteElementInterpolationNonsmooth} we have that
  \begin{align}
    \norm{\psi_{i}}_{L^{\infty}(K_{i})}^{2} \leq C h_{K}^{-2\dim K_{i}}.\label{eq:elemLinf}
  \end{align}
  In the case of \(K_{i}\) being a simplex of \(\mathcal{P}_{h}\), we find
  \begin{align}
    \norm{u}_{L^{1}(K_{i})}^{2}\leq h_{K}^{d}\norm{u}_{L^{2}(S_{K})}^{2}.\label{eq:elemL1K}
  \end{align}
  The case of \(K_{i}\) being a sub-simplex is more involved and covered in \cite{Ciarlet2013_AnalysisScott,ScottZhang1990_FiniteElementInterpolationNonsmooth}.
  The resulting estimate in this case is
  \begin{align}
    \norm{u}_{L^{1}(K_{i})}^{2} \leq C \left(h_{K}^{d-2}\norm{u}_{L^{2}(S_{K})}^{2} + h_{K}^{d-2+2t}\abs{u}_{H^{t}\left(S_{K}\right)}^{2}\right).\label{eq:elemL1e}
  \end{align}
  Inserting the estimates in \cref{eq:elemHs,eq:elemLinf,eq:elemL1K,eq:elemL1e} into \eqref{eq:elemStab} gives the stability estimate \eqref{eq:5}.

  Now, let \(p\) be an affine function on \(S_{K}\).
  Then, by \(\Pi_{h}p=p\) and the above stability result we have
  \begin{align*}
    &\int_{K}\dd \vec{x} \int_{S_{K}} \dd \vec{y} \frac{\abs{\left(u-\Pi_{h}u\right)\left(\vec{x}\right) - \left(u-\Pi_{h}u\right)\left(\vec{y}\right)}^{2}}{\abs{\vec{x}-\vec{y}}^{d+2s}}\\
    \leq& C\left[ \int_{K}\dd \vec{x} \int_{S_{K}} \dd \vec{y} \frac{\abs{\left(u-p\right)\left(\vec{x}\right) - \left(u-p\right)\left(\vec{y}\right)}^{2}}{\abs{\vec{x}-\vec{y}}^{d+2s}}\right.\\
    &\quad \left.+ \int_{K}\dd \vec{x} \int_{S_{K}} \dd \vec{y} \frac{\abs{\left(\Pi_{h}p-\Pi_{h}u\right)\left(\vec{x}\right) - \left(\Pi_{h}p-\Pi_{h}u\right)\left(\vec{y}\right)}^{2}}{\abs{\vec{x}-\vec{y}}^{d+2s}}\right]\\
    \leq& C\left[\abs{u-p}_{H^{s}\left(S_{K}\right)}^{2} + h_{K}^{-2s}\norm{u-p}_{L^{2}\left(S_{K}\right)}^{2} + h_{K}^{2t-2s}\abs{u-p}_{H^{t}\left(S_{K}\right)}^{2}\right].
  \end{align*}
  We distinguish two cases.
  In the first case, assume that \(\ell<1\) and take \(t=\ell\).
  By the Bramble-Hilbert Lemma, there exists a constant \(p\) such that \(\norm{u-p}_{L^{2}\left(S_{K}\right)}\leq Ch_{K}^{\ell}\abs{u}_{H^{\ell}\left(S_{K}\right)}\).
  Moreover \(\abs{u-p}_{H^{\ell}\left(S_{K}\right)}=\abs{u}_{H^{\ell}\left(S_{K}\right)}\) and hence \(\norm{u-p}_{H^{\ell}\left(S_{K}\right)}\leq C\abs{u}_{H^{\ell}\left(S_{K}\right)}\).
  Let us define an operator \(T\), mapping \(u\in H^{\ell}\left(S_{K}\right)\) to \(u-p\).
  The above estimates ensure that
  \begin{align*}
    \norm{T}_{H^{\ell}\left(S_{K}\right)\rightarrow L^{2}\left(S_{K}\right)} &\leq C h_{K}^{\ell}\abs{u}_{H^{\ell}\left(S_{K}\right)},
                                                                               \intertext{and}
    \norm{T}_{H^{\ell}\left(S_{K}\right)\rightarrow H^{\ell}\left(S_{K}\right)} &\leq C\abs{u}_{H^{\ell}\left(S_{K}\right)}
  \end{align*}
  and we obtain by interpolation of operators \cite[Proposition 14.1.5]{BrennerScott2002_MathematicalTheoryFiniteElementMethods} that \(\norm{T}_{H^{\ell}\left(S_{K}\right)\rightarrow H^{s}\left(S_{K}\right)} \leq C h_{K}^{\ell-s}\abs{u}_{H^{\ell}\left(S_{K}\right)}\).
  This means that \(\abs{u-p}_{H^{s}\left(S_{K}\right)}\leq Ch_{K}^{\ell-s}\abs{u}_{H^{\ell}\left(S_{K}\right)}\).
  Therefore, we have obtain that for \(\ell<1\) it holds that
  \begin{align}
    \int_{K}\dd \vec{x} \int_{S_{K}} \dd \vec{y} \frac{\abs{\left(u-\Pi_{h}u\right)\left(\vec{x}\right) - \left(u-\Pi_{h}u\right)\left(\vec{y}\right)}^{2}}{\abs{\vec{x}-\vec{y}}^{d+2s}}
    &\leq C h_{K}^{2\ell-2s}\abs{u}_{H^{\ell}\left(S_{K}\right)}^{2}. \label{eq:localApprox}
  \end{align}

  In the second case, assume that \(\ell\in[1,2]\), and take \(t=1\).
  Again, by the Bramble-Hilbert Lemma, a polynomial \(p\) of degree one can be found such that \(\norm{u-p}_{L^{2}\left(S_{K}\right)}\leq Ch_{K}^{\ell}\abs{u}_{H^{\ell}\left(S_{K}\right)}\) and \(\abs{u-p}_{H^{1}\left(S_{K}\right)}\leq Ch_{K}^{\ell-1}\abs{u}_{H^{1}\left(S_{K}\right)}\).
  By interpolation of operators, its is obtained that \(\abs{u-p}_{H^{s}\left(S_{K}\right)}\leq Ch_{K}^{\ell-s}\abs{u}_{H^{s}\left(S_{K}\right)}\), and therefore the local approximability result \eqref{eq:localApprox} holds in this case as well.
\end{proof}

The proof of the rate of convergence of the finite element solution is now an immediate consequence of \Cref{lem:approxProperty}.
\begin{lemma}
  If \(u\in H^{t}\left(\Omega\right)\cap H_{\text{loc}}^{\ell}\left(\Omega\right)\), for \(t,\ell\in(1/2,2]\) and \(0<s\leq t\leq\ell\), i.e. if \(u\) has Sobolev regularity \(t\) and interior regularity \(\ell\), then
  \begin{align}
    \norm{u-u_{h}}_{\V}
    &\leq C \left(h^{\ell-s}\abs{u}_{H_{\text{loc}}^{\ell}\left(\Omega\right)}+h_{\partial}^{t-s}\abs{u}_{H^{t}\left(\Omega\right)}\right), \label{eq:adaptiveRate}
  \end{align}
  where \(h_{\partial}\) is the maximum size of all elements \(K\) whose patch \(S_{K}\) touches the boundary.
  In particular, if the family of triangulations \(\mathcal{P}_{h}\) is globally quasi-uniform, and \(u\in H^{t}\left(\Omega\right)\), for \(t\in (1/2,2]\) and \(0<s\leq t\), then
  \begin{align}
    \norm{u-u_{h}}_{\V}
    &\leq C h^{t-s}\abs{u}_{H^{t}\left(\Omega\right)}.\label{eq:uniformRate}
  \end{align}
\end{lemma}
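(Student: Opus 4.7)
\medskip
\noindent\textbf{Proof plan.}
The plan is to apply C\'ea's inequality \eqref{eq:11} and then estimate the interpolation error using the Scott--Zhang operator $\Pi_h$. Splitting the full-domain Sobolev norm as
\begin{align*}
  \norm{u-\Pi_{h}u}_{H^{s}(\Omega)}^{2}
  = \norm{u-\Pi_{h}u}_{L^{2}(\Omega)}^{2}
  + \sum_{K\in\mathcal{P}_{h}} \int_{K}\dd\vec{x}\int_{\Omega}\dd\vec{y}\,
     \frac{\abs{(u-\Pi_{h}u)(\vec{x})-(u-\Pi_{h}u)(\vec{y})}^{2}}{\abs{\vec{x}-\vec{y}}^{d+2s}},
\end{align*}
I would further split the inner integral over $\Omega$ into the patch $S_{K}$ and its complement $\Omega\setminus S_{K}$. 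The patch piece is controlled directly by \Cref{lem:approxProperty}. The $L^{2}(\Omega)$-piece is handled by summing local Scott--Zhang $L^{2}$-estimates $\norm{u-\Pi_{h}u}_{L^{2}(K)}\leq Ch_{K}^{t}\abs{u}_{H^{t}(S_{K})}$ (or $h_{K}^{\ell}\abs{u}_{H^{\ell}(S_{K})}$ in the interior) and then using $t>s$ to recover an $h^{t}$ rate which is of higher order than what is needed.

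For the tail $\int_{K}\int_{\Omega\setminus S_{K}}$, I would use the triangle inequality $(a-b)^{2}\leq 2a^{2}+2b^{2}$ together with the elementary kernel bound
\begin{align*}
  \int_{\abs{\vec{z}}\geq ch_{K}}\frac{\dd\vec{z}}{\abs{\vec{z}}^{d+2s}}\leq C h_{K}^{-2s},
\end{align*}
where the lower bound on $|\vec{x}-\vec{y}|$ for $\vec{x}\in K$, $\vec{y}\notin S_{K}$ comes from shape-regularity and local quasi-uniformity. The term involving $|(u-\Pi_{h}u)(\vec{x})|^{2}$ directly yields $\sum_{K} h_{K}^{-2s}\norm{u-\Pi_{h}u}_{L^{2}(K)}^{2}$. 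The symmetric term involving $|(u-\Pi_{h}u)(\vec{y})|^{2}$ requires swapping the order of summation and integration: for fixed $\vec{y}\in K'$, the contributions come from elements $K$ with $K\not\subset S_{K'}$, so by local quasi-uniformity the $K$-integral sums to $C h_{K'}^{-2s}$, producing the same type of bound $\sum_{K'} h_{K'}^{-2s}\norm{u-\Pi_{h}u}_{L^{2}(K')}^{2}$.

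At this point the estimate reads, schematically,
\begin{align*}
  \norm{u-u_{h}}_{\V}^{2}
  \leq C\sum_{K}\left[h_{K}^{-2s}\norm{u-\Pi_{h}u}_{L^{2}(K)}^{2} + (\text{local }H^{s}\text{ contribution from }\Cref{lem:approxProperty})\right],
\end{align*}
and I would now partition the sum according to whether $S_{K}$ touches $\partial\Omega$ or lies in the interior. For interior patches the full regularity $\ell$ is available, producing contributions of the form $h_{K}^{2\ell-2s}\abs{u}_{H^{\ell}(S_{K})}^{2}$ which sum to $Ch^{2\ell-2s}\abs{u}_{H^{\ell}_{\text{loc}}(\Omega)}^{2}$ by the finite overlap of patches. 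For patches meeting the boundary only the global regularity $t$ may be used, giving $h_{K}^{2t-2s}\abs{u}_{H^{t}(S_{K})}^{2}\leq h_{\partial}^{2t-2s}\abs{u}_{H^{t}(S_{K})}^{2}$, which sums to $Ch_{\partial}^{2t-2s}\abs{u}_{H^{t}(\Omega)}^{2}$. Taking square roots delivers \eqref{eq:adaptiveRate}, and specialising to a globally quasi-uniform mesh with $h=h_{\partial}$ (so that only the $t$-regularity is needed anywhere) yields \eqref{eq:uniformRate}.

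\medskip
\noindent\emph{Main obstacle.} The routine part is the patch piece, already essentially packaged in \Cref{lem:approxProperty}. The delicate point is handling the ``far-field'' tail $\int_{K}\int_{\Omega\setminus S_{K}}$: one must pass from a pointwise kernel bound to the correct $h_{K}^{-2s}$ factor while simultaneously controlling the $\vec{y}$-term by a symmetrisation/summation-swapping argument that leverages local quasi-uniformity, and then match the resulting $L^{2}$-estimates locally so that interior elements benefit from regularity $\ell$ while only boundary-touching elements are penalised by the weaker regularity $t$ and the scale $h_{\partial}$.
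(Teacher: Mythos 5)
Your proof is correct, and the high-level structure (C\'ea's inequality, Scott--Zhang interpolation, then Lemma~\ref{lem:approxProperty} applied patchwise with the sum split according to whether $S_K$ touches $\partial\Omega$) matches the paper. The one genuine difference is in how the global $H^s(\Omega)$-seminorm gets localized to patches. The paper invokes Faermann's localization result \cite{Faermann2000_LocalizationAronszajnSlobodeckijNorm,Faermann2002_LocalizationAronszajnSlobodeckijNorm} as a black box, which states exactly
\begin{align*}
  \norm{v}_{H^{s}(\Omega)}^{2}\;\lesssim\;\sum_{K}\left[\int_{K}\int_{S_{K}}\frac{\abs{v(\vec{x})-v(\vec{y})}^{2}}{\abs{\vec{x}-\vec{y}}^{d+2s}}\dd\vec{y}\,\dd\vec{x}+h_{K}^{-2s}\norm{v}_{L^{2}(K)}^{2}\right],
\end{align*}
whereas you propose to re-derive this bound from scratch: split the double integral into the patch piece and the tail $\int_{K}\int_{\Omega\setminus S_{K}}$, bound the tail via $(a-b)^{2}\leq 2a^{2}+2b^{2}$ and the kernel decay $\int_{\abs{\vec{z}}\geq ch_{K}}\abs{\vec{z}}^{-d-2s}\dd\vec{z}\leq Ch_{K}^{-2s}$, and control the symmetric $\abs{v(\vec{y})}^{2}$-term by swapping the sum and integral (observing that for $\vec{y}\in K'$, $\vec{y}\notin S_{K}$ iff $K\cap S_{K'}=\emptyset$, so the $K$-sum reassembles to $\int_{\Omega\setminus S_{K'}}$, again bounded by $Ch_{K'}^{-2s}$ under local quasi-uniformity). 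This is a valid and more self-contained route; what it buys you is independence from the cited Faermann lemma at the cost of one page of elementary tail estimates, and it makes explicit where local quasi-uniformity actually enters. The remaining steps — absorbing $h_{K}^{-2s}\norm{u-\Pi_{h}u}_{L^{2}(K)}^{2}$ into $h_{K}^{2\ell-2s}\abs{u}_{H^{\ell}(S_{K})}^{2}$ (resp.\ $h_{K}^{2t-2s}\abs{u}_{H^{t}(S_{K})}^{2}$) via the Scott--Zhang $L^{2}$-estimate, summing with finite patch overlap, and specializing $h_{\partial}=h$ for \eqref{eq:uniformRate} — are identical to the paper's.
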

\begin{proof}
  Assume that \(u\in H^{t}\left(\Omega\right)\cap H_{\text{loc}}^{\ell}\left(\Omega\right)\), for \(t,\ell\in(1/2,2]\) and \(0<s\leq t\leq\ell\).
  By a localisation result of Faermann \cite{Faermann2000_LocalizationAronszajnSlobodeckijNorm,Faermann2002_LocalizationAronszajnSlobodeckijNorm}, we can estimate
  \begin{align}
    & \norm{u-\Pi_{h}u}_{H^{s}\left(\Omega\right)}^{2} \nonumber\\
    \leq& C \sum_{K}\left[\int_{K}\dd \vec{x} \int_{S_{K}} \dd \vec{y} \frac{\abs{\left(u-\Pi_{h}u\right)\left(\vec{x}\right) - \left(u-\Pi_{h}u\right)\left(\vec{y}\right)}^{2}}{\abs{\vec{x}-\vec{y}}^{d+2s}} \right.\nonumber \\
    &\left.\qquad+ h_{K}^{-2s}\norm{u-\Pi_{h}u}_{L^{2}\left(K\right)}^{2}\right] \label{eq:localisation}.
  \end{align}
  The local approximability of the Scott-Zhang Interpolation \eqref{eq:6} then gives
  \begin{align*}
    \norm{u-\Pi_{h}u}_{H^{s}\left(\Omega\right)}^{2}
    &\leq C \left\{\sum_{\substack{K\\S_{K}\cap\partial\Omega=\emptyset}} h_{K}^{2\ell-2s}\abs{u}_{H^{\ell}\left(S_{K}\right)}^{2} + \sum_{\substack{K\\S_{K}\cap\partial\Omega\not=\emptyset}} h_{K}^{2t-2s}\abs{u}_{H^{t}\left(S_{K}\right)}^{2}\right\} \\
    &\leq C \left(h^{2\ell-2s}\abs{u}_{H_{\text{loc}}^{\ell}\left(\Omega\right)}^{2} + h_{\partial}^{2t-2s}\abs{u}_{H^{t}\left(\Omega\right)}^{2}\right),
\end{align*}
  so that we can conclude using \cref{eq:11}.
\end{proof}

Estimate \eqref{eq:uniformRate} implies that if \(u\in H^{2}\left(\Omega\right)\), then the expected rate of convergence on a globally quasi-uniform mesh is \(h^{2-s}=\mathcal{O}\left(n^{(s-2)/d}\right)\).
However, the solutions of \eqref{eq:fracPoisson} generally have limited regularity as described in \Cref{thm:regularity}, owing to the lack of regularity in the neighbourhood of the boundary.
This suggests one should use a more finely graded mesh near the boundary so that \(h_{\partial}\ll h\).
The estimate \eqref{eq:adaptiveRate} distinguishes between elements in the interior of the domain \(\Omega\) and elements touching the boundary \(\partial\Omega\).
Generally, one would hope to restore the optimal rate of convergence \(\mathcal{O}\left(n^{(s-2)/d}\right)\) observed for smooth solutions by using appropriately graded meshes.

To this end, we choose \(h_{\partial}=\mathcal{O}\left(h^{(\ell-s)/(1-2\varepsilon)}\right)\), where \(h\) denotes the maximum element size in the interior.
Applying estimate \eqref{eq:adaptiveRate} with \(t=s+1/2-\varepsilon\) we obtain
\begin{align}
  \norm{u-u_{h}}_{\V}&\leq Ch^{\ell-s}\left(\abs{u}_{H_{\text{loc}}^{\ell}\left(\Omega\right)}+\abs{u}_{H^{1/2+s-\varepsilon}\left(\Omega\right)}\right). \label{eq:7}
\end{align}

In the one-dimensional case, \(d=1\), it is a simple matter to construct meshes for which \(h_{K}=h\) for all elements \(K\) such that \(S_{K}\cap\partial\Omega=\emptyset\), and \(h_{K}=h^{(\ell-s)/(1-2\varepsilon)}\) for the four elements whose patches touch the boundary.
Then the number of unknowns \(n=\mathcal{O}\left(h^{-1}\right)\) and therefore
\begin{align}
  \norm{u-u_{h}}_{\V}&\leq Cn^{s-\ell}\left(\abs{u}_{H_{\text{loc}}^{\ell}\left(\Omega\right)}+\abs{u}_{H^{1/2+s-\varepsilon}\left(\Omega\right)}\right). \label{eq:predictedRate1D}
\end{align}
This means that, provided \(u\in H_{\text{loc}}^{2}\left(\Omega\right)\), the optimal rate of convergence is indeed restored by the use of appropriately graded elements near the boundary to ameliorate the effect of the singularity.

The situation in the two-dimensional case is less clear-cut owing to the fact that one is constrained in the construction of the meshes if (as we do here) one prohibits the use of hanging nodes.
This raises the possibility of the optimal rate of convergence of \(\mathcal{O}\left(n^{(s-2)/d}\right)\) not being achievable in practice.
\citet{AcostaBorthagaray2015_FractionalLaplaceEquation} proposed a graded mesh for the unit disc domain for approximating the solution obtained with a constant right-hand side, in which a grading parameter \(\mu\geq1\) is selected and elements are distributed in \(M\) concentric layers of radii \(r_{i}=1-\left(1-\frac{i}{M}\right)^{\mu}\), \(i=1,\dots,M\), so that the element sizes are \(h_{i}=r_{i}-r_{i-1}\).
The dimension of the associated finite element space is \(n=\mathcal{O}\left(M^{2}\right)\) if \(\mu\leq2\), and \(n=\mathcal{O}\left(M^{\mu}\right)\) if \(\mu>2\).
By following the arguments in \cite{AcostaBorthagaray2015_FractionalLaplaceEquation} it is shown that
\begin{align}
  \norm{u-u_{h}}_{\V}&\leq Cn^{(\max\left\{s-\ell,-1+\varepsilon\right\})/2}\left(\abs{u}_{H_{\text{loc}}^{\ell}\left(\Omega\right)}+\abs{u}_{H^{1/2+s-\varepsilon}\left(\Omega\right)}\right). \label{eq:predictedRate2D}
\end{align}
This suggests that the optimal rate in \(d=2\) dimensions is \(n^{-1/2+\varepsilon}\), which can be achieved if \(u\) has interior regularity of at least order \(1+s-\varepsilon\).
We note that, contrary to the result stated in \cite{AcostaBorthagaray2015_FractionalLaplaceEquation}, this conclusion holds for all \(s\in(0,1)\), provided that the solution \(u\) enjoys sufficient interior regularity.

A priori mesh grading of the type discussed above can be beneficial but does rely on the availability of a priori knowledge of the regularity which, unfortunately, is not always the case for practical problems on complicated domains.
Therefore, we explore a posteriori error indicators for the fractional Laplacian in \Cref{sec:adapt-mesh-refin}.

\section{Adaptive Mesh Refinement}
\label{sec:adapt-mesh-refin}

The fractional Laplacian \(\left(-\Delta\right)^{s}:\V\rightarrow H^{-s}\left(\Omega\right)\) is continuous with continuous inverse.
Hence, the discretization error
\begin{align*}
  e &= u-u_{h}
\end{align*}
can be bounded from above and below in terms of the residual \(r=f-\left(-\Delta\right)^{s}u_{h}\) as
\begin{align*}
  c\norm{r}_{H^{-s}\left(\Omega\right)}\leq \norm{e}_{\V}\leq C\norm{r}_{H^{-s}\left(\Omega\right)},
\end{align*}
where \(c\) and \(C\) are positive constants depending on the norm of \(\left(-\Delta\right)^{s}\) and its inverse.
It was shown in \cite{Faermann2002_LocalizationAronszajnSlobodeckijNorm,Faermann2000_LocalizationAronszajnSlobodeckijNorm} that the Babu\v{s}ka-Rheinboldt type error indicators
\begin{align*}
  \eta_{i}:= \sup_{\substack{v\in\V\\v\phi_{i}\neq 0}} \frac{\abs{a\left(e,v\phi_{i}\right)}}{\norm{v\phi_{i}}_{\V}}
\end{align*}
are reliable and efficient for non-negative order differential operators.
Unfortunately, the estimators are not computable owing to the presence of the supremum.
However, \(\eta_{i}\) can be estimated up to a generic (unknown) factor, independent of the solution and the mesh-size, by
\begin{align}
  \hat{\eta}_{i}:=\sqrt{\sum_{K\in S_{i}} h_{K}^{2s}\norm{r}_{L^{2}\left(K\right)}^{2}},\label{eq:BRindicators}
\end{align}
where \(S_{i}\) is the patch formed by the elements which contain the node \(\vec{z}_{i}\).
Therefore, the following a posteriori estimate holds:
\begin{align*}
  \norm{e}_{\V}^{2}\leq \hat{\eta}:=\sum_{i}\hat{\eta}_{i}^{2}.
\end{align*}
The estimator \eqref{eq:BRindicators} involves only the local \(L^{2}\)-norms of the residual and can easily be evaluated using quadrature, but then one requires pointwise values for \(\left(-\Delta\right)^{s}u_{h}\).
A direct evaluation using quadrature rules is unattractive due to the presence of the singularity of the kernel \(\abs{\vec{x}-\vec{y}}^{-d-2s}\) as well as the fact that the domain of integration is unbounded.
The next result gives a method which reduces the evaluation of the strong form of the fractional Laplacian to regular integrals over bounded domains, thus circumventing the aforementioned issues.

\begin{restatable}{lemma}{strongForm}\label{lem:strongForm}
  For \(u\in V_{h}\) and \(\vec{x}\in K_{0}\) for an element \(K_{0}\in\mathcal{P}_{h}\), the fractional Laplacian \(\left(-\Delta\right)^{s}u\) can be evaluated as
  \begin{align}
    \frac{\left(-\Delta\right)^{s}u\left(\vec{x}\right)}{C\left(d,s\right)}
    &= \frac{1}{d+2s-2}\int_{\partial K_{0}} \frac{\restr{\grad u}{K_{0}}\cdot\vec{n}_{\vec{y}}}{\abs{\vec{x}-\vec{y}}^{d+2s-2}} \dd \vec{y}
      -\frac{u\left(\vec{x}\right)}{2s} \int_{\partial K_{0}} \frac{\vec{n}_{\vec{y}}\cdot\left(\vec{x}-\vec{y}\right)}{\abs{\vec{x}-\vec{y}}^{d+2s}} \dd \vec{y} \nonumber\\
    &\quad + \sum_{K\neq K_{0}} \left\{\frac{1}{2s(d+2s-2)} \int_{\partial K}\restr{\grad u}{K} \cdot \vec{n}_{\vec{y}} \frac{1}{\abs{\vec{x}-\vec{y}}^{d+2s-2}}\dd \vec{y} \right. \nonumber\\
      &\qquad\left.- \frac{1}{2s}\int_{\partial K} u\left(\vec{y}\right) \frac{\vec{n}_{\vec{y}}\cdot\left(\vec{x}-\vec{y}\right)}{\abs{\vec{x}-\vec{y}}^{d+2s}}\dd \vec{y}\right\} \label{eq:8}
  \end{align}
  if \(s\neq 1-d/2\), and as
  \begin{align}
    \frac{\left(-\Delta\right)^{s}u\left(\vec{x}\right)}{C\left(d,s\right)}
    &= \int_{\partial K_{0}} \restr{\grad u}{K_{0}}\cdot\vec{n}_{\vec{y}} \log \frac{1}{\abs{\vec{x-\vec{y}}}} \dd \vec{y}
      -\frac{u\left(\vec{x}\right)}{2s} \int_{\partial K_{0}} \frac{\vec{n}_{\vec{y}}\cdot\left(\vec{x}-\vec{y}\right)}{\abs{\vec{x}-\vec{y}}^{d+2s}} \dd \vec{y} \nonumber\\
    &\quad + \sum_{K\neq K_{0}} \left\{\frac{1}{2s} \int_{\partial K}\restr{\grad u}{K} \cdot \vec{n}_{\vec{y}} \log\frac{1}{\abs{\vec{x}-\vec{y}}}\dd \vec{y} \right.\nonumber \\
   &\qquad\left.   -\frac{1}{2s}\int_{\partial K}  u\left(\vec{y}\right) \frac{\vec{n}_{\vec{y}}\cdot\left(\vec{x}-\vec{y}\right)}{\abs{\vec{x}-\vec{y}}^{2}}\dd \vec{y}  \right\}.\label{eq:9}
  \end{align}
  when \(s=1-d/2\).
\end{restatable}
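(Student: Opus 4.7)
The plan is to split the ambient integration domain as $\mathbb{R}^{d} = K_{0}\cup\bigl(\bigcup_{K\neq K_{0}}K\bigr)\cup\Omega^{c}$. Because $u=0$ on $\Omega^{c}$, the exterior piece reduces to $u(\vec{x})\int_{\Omega^{c}}\abs{\vec{x}-\vec{y}}^{-(d+2s)}\dd\vec{y}$; the singular piece over $K_{0}$ requires principal-value treatment, while every $K\neq K_{0}$ gives an ordinary integral since $\vec{x}$ in the interior of $K_0$ has positive distance to $K$. On each piece I would integrate by parts, driven by the two kernel identities
\begin{align*}
  \frac{1}{\abs{\vec{x}-\vec{y}}^{d+2s}} = \frac{1}{2s}\grad_{\vec{y}}\cdot\frac{\vec{x}-\vec{y}}{\abs{\vec{x}-\vec{y}}^{d+2s}},
  \qquad
  \frac{\vec{x}-\vec{y}}{\abs{\vec{x}-\vec{y}}^{d+2s}} = \frac{1}{d+2s-2}\grad_{\vec{y}}\frac{1}{\abs{\vec{x}-\vec{y}}^{d+2s-2}},
\end{align*}
the second replaced by $\grad_{\vec{y}}\log\frac{1}{\abs{\vec{x}-\vec{y}}}$ in the borderline case $s=1-d/2$. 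Piecewise linearity of $u$ means $\grad u|_{K}$ is a constant vector on each element and factors out of every integral.

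On a regular element $K\neq K_{0}$ the integrand is smooth. Splitting $u(\vec{x})-u(\vec{y})$, the first identity plus the divergence theorem converts the $u(\vec{x})$ and $u(\vec{y})$ contributions into boundary integrals on $\partial K$; integration by parts of the $u(\vec{y})$ term additionally produces a volume integral $\grad u|_{K}\cdot\int_{K}(\vec{x}-\vec{y})\abs{\vec{x}-\vec{y}}^{-(d+2s)}\dd\vec{y}$, which the second identity and another application of the divergence theorem collapse to the boundary integral of $\grad u|_{K}\cdot\vec{n}_{\vec{y}}\abs{\vec{x}-\vec{y}}^{-(d+2s-2)}$. This reproduces the $K\neq K_{0}$ summand of \eqref{eq:8} together with one of its two $u(\vec{y})$-and-$u(\vec{x})$ boundary pieces.

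The $K_{0}$ piece is the technical heart of the argument and the step I expect to be most delicate to present cleanly. Here I would exploit linearity \emph{before} touching the principal value by writing $u(\vec{x})-u(\vec{y})=\grad u|_{K_{0}}\cdot(\vec{x}-\vec{y})$, which cancels the two individually non-integrable pieces at the level of the integrand. Excising a ball $B_{\epsilon}(\vec{x})\subset K_{0}$, invoking the second identity, and applying the divergence theorem on $K_{0}\setminus B_{\epsilon}(\vec{x})$ produces surface integrals over $\partial K_{0}$ and over $\partial B_{\epsilon}(\vec{x})$. On the small sphere the integrand reduces to a constant vector dotted with $\int_{\partial B_{\epsilon}}\vec{n}\dd\vec{y}$, which vanishes by spherical symmetry, so the limit $\epsilon\to 0$ leaves only the $\partial K_{0}$ contribution $\frac{1}{d+2s-2}\int_{\partial K_{0}}\grad u|_{K_{0}}\cdot\vec{n}_{\vec{y}}\abs{\vec{x}-\vec{y}}^{-(d+2s-2)}\dd\vec{y}$.

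Finally, the exterior contribution $u(\vec{x})\int_{\Omega^{c}}\abs{\vec{x}-\vec{y}}^{-(d+2s)}\dd\vec{y}$ is rewritten via the first identity as a surface integral over $\partial\Omega$; the boundary-at-infinity term vanishes because $\abs{\vec{x}-\vec{y}}^{1-d-2s}$ decays strictly faster than $\abs{\vec{y}}^{1-d}$ for $s>0$. Adding this to the $u(\vec{x})$-type boundary pieces arising from every $K\neq K_{0}$, the interior edges telescope (adjacent elements carry opposite outward normals), the $\partial\Omega$ parts of the elemental sum cancel against the exterior contribution, and only a single integral over $\partial K_{0}$ survives, producing exactly the $-u(\vec{x})/(2s)$ term in \eqref{eq:8}. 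The borderline case $s=1-d/2$ follows from the same bookkeeping after substituting $\log\frac{1}{\abs{\vec{x}-\vec{y}}}$ for $\frac{1}{d+2s-2}\abs{\vec{x}-\vec{y}}^{-(d+2s-2)}$ wherever it appears, yielding \eqref{eq:9}.
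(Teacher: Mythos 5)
Your proof is correct and closely parallels the paper's: the same three-way decomposition (singular piece on \(K_{0}\), regular pieces on the remaining elements, and the exterior), the same exploitation of piecewise linearity to write \(u(\vec{x})-u(\vec{y})=\restr{\grad u}{K_{0}}\cdot(\vec{x}-\vec{y})\) on \(K_{0}\) before passing to the principal value, the same excision of a small ball around \(\vec{x}\) with cancellation by spherical symmetry, and the same two kernel identities driving the conversion to boundary integrals. Where you genuinely deviate is the handling of the \(u(\vec{x})\) contribution. The paper treats \(u(\vec{x})\int_{\mathbb{R}^{d}\setminus K_{0}}\abs{\vec{x}-\vec{y}}^{-(d+2s)}\dd\vec{y}\) as a single divergence integral over the unbounded region \(\mathbb{R}^{d}\setminus K_{0}\); the only finite boundary of this region is \(\partial K_{0}\) and the contribution at infinity vanishes, so the \(-\frac{u(\vec{x})}{2s}\int_{\partial K_{0}}\) term drops out in one step with no cross-element bookkeeping. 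You instead split that integral across each \(K\neq K_{0}\) and \(\Omega^{c}\), obtain one boundary term per piece, and then telescope: interior faces cancel pairwise because adjacent elements carry opposite outward normals, and the \(\partial\Omega\) faces cancel against the \(\Omega^{c}\) piece, leaving exactly \(\partial K_{0}\) with the sign flipped to the inward normal. Both routes land on the same term, but the paper's single application of the divergence theorem over \(\mathbb{R}^{d}\setminus K_{0}\) sidesteps the (correct but delicate) face-by-face cancellation you would otherwise need to justify. A second, cosmetic difference: for \(K\neq K_{0}\) the paper invokes Green's second identity using the Laplacian identity for \(\abs{\vec{x}-\vec{y}}^{2-d-2s}\), while you iterate the divergence theorem twice using only the gradient identities; the two are algebraically equivalent, and in both cases the vanishing of \(\Delta u\) on each element is what removes the volume remainder.
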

The derivation of the above expressions is given in \Cref{sec:proof}.
The integrals over the element surface in \cref{eq:8,eq:9} can be evaluated using standard quadrature rules since the integrals in \cref{eq:8,eq:9} are regular.
Although we are primarily concerned here with the case of piecewise linear basis functions, the above expression can easily be generalised to higher order finite elements.

\section{Computation of Matrix Entries and Solution of Systems Arising from the Discretization of the Fractional Laplacian}
\label{sec:solut-syst-involv}

The fractional Poisson equation \eqref{eq:fracPoisson} leads to a dense linear algebraic system
\begin{align}
  \mat{A}^{s}\vec{u}=\vec{b}, \label{eq:systemPoisson}
\end{align}
in which the entries in the matrix \(\mat{A}^{s}=\left\{a\left(\phi_{i},\phi_{j}\right)\right\}_{ij}\) involve singular integrals over \(\Omega\times\Omega\).
In order to compute these entries, we decompose the integrals over \(\Omega\times\Omega\) into contributions between pairs of elements \(K,\tilde{K}\in\mathcal{P}_{h}\) and between pairs consisting of elements \(K\in\mathcal{P}_{h}\) and external edges \(e\in\partial\mathcal{P}_{h}\) as follows:
\begin{align*}
  a(\phi_{i},\phi_{i}) &= \sum_{K}\sum_{\tilde{K}} a^{K\times\tilde{K}}(\phi_{i},\phi_{j}) + \sum_{K}\sum_{e} a^{K\times e}(\phi_{i},\phi_{j}),
\end{align*}
The individual contributions \(a^{K\times\tilde{K}}\) and \(a^{K\times e}\) are given by:
\begin{align}
  a^{K\times\tilde{K}}(\phi_{i},\phi_{j})
  & = \frac{C(d,s)}{2} \int_{K} \dd \vec{x} \int_{\tilde{K}} \dd \vec{y} \frac{\left(\phi_{i}(\vec{x})-\phi_{i}(\vec{y})\right)\left(\phi_{j}(\vec{x})-\phi_{j}(\vec{y})\right)}{\abs{\vec{x}-\vec{y}}^{d+2s}}, \label{eq:elementPair}\\
    a^{K\times e}(\phi_{i},\phi_{j})
  &= \frac{C(d,s)}{2s} \int_{K} \dd \vec{x} \int_{e} \dd \vec{y} \frac{\phi_{i}\left(\vec{x}\right) \phi_{j}\left(\vec{x}\right) ~ \vec{n}_{e}\cdot\left(\vec{x}-\vec{y}\right)}{\abs{\vec{x}-\vec{y}}^{d+2s}}.\label{eq:elementPairBoundary}
\end{align}
Contributions from non-disjoint pairs of elements are not amenable to numerical quadrature.
Fortunately, these can be treated by adapting techniques used in the boundary element method literature to address similar issues arising from singular kernels \cite{SauterSchwab2010_BoundaryElementMethods}.
However, the fractional Laplacian does pose new difficulties beyond those addressed by the BEM literature, but which can be treated as described in \cite{AinsworthGlusa2017_TowardsEfficientFiniteElement,AcostaBersetcheEtAl2016_ShortFeImplementation2d}.
In particular, in \cite{AinsworthGlusa2017_TowardsEfficientFiniteElement} we develop non-uniform order Gauss-type quadrature rules applicable to the case of globally quasi-uniform meshes.
The extension of these techniques to the non-quasi-uniform case needed here is described in \Cref{sec:determ-quadr-orders}.
Alternatively, an approach proposed by \citeauthor{ChernovPetersdorffEtAl2011_ExponentialConvergenceHpQuadrature} could be taken.
The algorithm put forward in \cite{ChernovSchwab2012_ExponentialConvergenceGauss,ChernovPetersdorffEtAl2015_QuadratureAlgorithmsHighDimensional,ChernovPetersdorffEtAl2011_ExponentialConvergenceHpQuadrature} allows to transform quadrature rules given on the unit hypercube \([0,1]^{2d}\) to any pair of elements \(K\times\tilde{K}\).
Here, the singularity is taken into account through the choice of the weight of the quadrature rules.

The solution of the dense matrix equation \eqref{eq:systemPoisson} also poses problems.
For instance, using matrix factorisation to solve the resulting system has complexity \(\mathcal{O}\left(n^{3}\right)\), where \(n\) is the number of degrees of freedom.

Alternatively, if conjugate gradient iteration is used, the number of iterations necessary to converge to a given error tolerance scales as the square root of the condition number of the matrix.
The condition numbers of the fractional Laplacian grows with the number of unknowns \(n\), as shown by \Cref{thm:condFracLapl}.
\begin{theorem}[\cite{AinsworthMcleanEtAl1999_ConditioningBoundaryElementEquations}]\label{thm:condFracLapl}
  For \(s<d/2\), and a family of shape regular triangulations \(\mathcal{P}_{h}\) with minimal and maximal element size \(h_{\min}\) and \(h\), the spectrum of the stiffness matrix satisfies
  \begin{align*}
    cn^{-2s/d}h_{\min}^{d-2s}\mat{I} \leq \mat{A}^{s} \leq Ch^{d-2s}\mat{I}, \\
    cn^{-2s/d}\mat{I} \leq \left(\mat{D}^{s}\right)^{-1/2}\mat{A}^{s} \left(\mat{D}^{s}\right)^{-1/2} \leq C\mat{I},
  \end{align*}
  where \(\mat{D}^{s}\) is the diagonal part of \(\mat{A}^{s}\).
  The condition number of the stiffness matrix satisfies
  \begin{align}
    \kappa\left(\mat{A}^{s}\right) &= C \left(\frac{h}{h_{\min}}\right)^{d-2s}n^{2s/d}, \label{eq:cond}\\
    \kappa\left(\left(\mat{D}^{s}\right)^{-1}\mat{A}^{s}\right) &= Cn^{2s/d}.\label{eq:condPreconditioned}
  \end{align}
\end{theorem}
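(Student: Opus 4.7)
The plan is to derive upper and lower spectral bounds for \(\mat{A}^s\) directly from the bilinear form, transfer these to the diagonally scaled matrix, and then read off the condition number estimates. Since the statement is a standard conditioning result for Galerkin matrices associated with integral operators of order \(2s\), I would essentially follow the template of \cite{AinsworthMcleanEtAl1999_ConditioningBoundaryElementEquations} adapted to the bilinear form \(a(\cdot,\cdot)\).

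\textbf{Upper bound on \(\lambda_{\max}(\mat{A}^s)\).} Given \(\vec v\in\mathbb{R}^n\), form \(v_h=\sum_i v_i\phi_i\in V_h\) so that \(\vec v^{\top}\mat{A}^s\vec v = a(v_h,v_h) = \tfrac{C(d,s)}{2}\abs{v_h}_{H^s(\mathbb{R}^d)}^2\). I would apply the Faermann localisation identity to estimate this by a sum of Aronszajn--Slobodeckij seminorms over element patches, then invoke the standard piecewise-linear inverse inequality \(\abs{v_h}_{H^s(S_K)}\leq C h_K^{-s}\norm{v_h}_{L^2(S_K)}\) together with the nodal mass bound \(\norm{v_h}_{L^2(K)}^2\leq C h_K^d\sum_{i\in\mathcal{I}_K}v_i^2\). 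Summing over \(K\) with bounded overlap of the patches, and using \(h_K\leq h\) together with \(d-2s>0\), then yields \(\vec v^{\top}\mat{A}^s\vec v\leq C h^{d-2s}|\vec v|^2\).

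\textbf{Lower bound on \(\lambda_{\min}(\mat{A}^s)\).} Starting again from \(\vec v^{\top}\mat{A}^s\vec v = \tfrac{C(d,s)}{2}\abs{v_h}_{H^s(\mathbb{R}^d)}^2\), the fractional Poincar\'e inequality on \(\V\) gives \(\abs{v_h}_{H^s(\mathbb{R}^d)}^2\geq c_\Omega\norm{v_h}_{L^2(\Omega)}^2\), and the mass bound applied in reverse gives \(\norm{v_h}_{L^2(\Omega)}^2\geq c\sum_K h_K^d\sum_{i\in\mathcal{I}_K}v_i^2\). The crude estimate of this nodal sum by \(h_{\min}^d|\vec v|^2\) does not yet produce the desired bound; the sharper factor \(n^{-2s/d}h_{\min}^{d-2s}\) is obtained by a duality/interpolation step between this \(L^2\) lower bound and the \emph{local} inverse inequality on the smallest element, which quantifies how much a unit \(\ell^2\) nodal vector can concentrate on a few basis functions while simultaneously saturating the Poincar\'e and inverse bounds. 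This interpolation step, carried out in \cite{AinsworthMcleanEtAl1999_ConditioningBoundaryElementEquations}, is the most delicate part of the argument and is where the precise power \(n^{-2s/d}\) arises.

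\textbf{Diagonal scaling and condition numbers.} The scaling estimate \eqref{eq:elemHs} together with the fact that \(\phi_i\) is supported in a patch of diameter \(\mathcal{O}(h_i)\) gives \((\mat{D}^s)_{ii}=a(\phi_i,\phi_i)\asymp h_i^{d-2s}\). Working instead with the normalised basis \(\tilde\phi_i=\phi_i/\sqrt{(\mat{D}^s)_{ii}}\), for which \(a(\tilde\phi_i,\tilde\phi_i)\asymp 1\), and repeating the previous two steps yields an upper bound of order \(1\) and a lower bound of order \(n^{-2s/d}\) on the Rayleigh quotient of \((\mat{D}^s)^{-1/2}\mat{A}^s(\mat{D}^s)^{-1/2}\), the local \(h_i\) factors now cancelling. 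Finally, since \((\mat{D}^s)^{-1}\mat{A}^s\) is similar to the symmetrically preconditioned matrix and hence has the same spectrum, forming the ratios of the largest and smallest eigenvalues in each case delivers \eqref{eq:cond} and \eqref{eq:condPreconditioned}.
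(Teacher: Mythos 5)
The paper does not prove \Cref{thm:condFracLapl}; it is quoted verbatim from \cite{AinsworthMcleanEtAl1999_ConditioningBoundaryElementEquations} with no in-text derivation, so there is no ``paper proof'' to compare against. Your sketch is therefore best judged on whether it correctly reconstructs the argument of the cited reference.

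Your upper bound is sound: Faermann localisation, the inverse estimate $\abs{v_h}_{H^s(S_K)}\lesssim h_K^{-s}\norm{v_h}_{L^2(S_K)}$ together with $\abs{\phi_i}_{H^s}^2\lesssim h_K^{d-2s}$, the nodal mass equivalence, and bounded overlap of patches do give $\vec v^\top\mat{A}^s\vec v\leq Ch^{d-2s}\abs{\vec v}^2$ and, after diagonal normalisation, the $C\mat{I}$ bound for the scaled matrix. The passage from the symmetrically to the one-sidedly preconditioned matrix by similarity, and the formation of condition numbers, are also correct.

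The lower bound is where your narrative stops short of a proof. Coercivity plus the nodal mass estimate gives only $\vec v^\top\mat{A}^s\vec v\gtrsim\sum_i h_i^d v_i^2\gtrsim h_{\min}^d\abs{\vec v}^2$, and since $n\lesssim h_{\min}^{-d}$ one has $n^{-2s/d}h_{\min}^{d-2s}\gtrsim h_{\min}^d$, so the theorem claims a genuinely sharper bound; the same gap appears in the scaled form, where one must show $\vec v^\top\mat{A}^s\vec v\gtrsim n^{-2s/d}\sum_i h_i^{d-2s}v_i^2$ rather than the mere $\sum_i h_i^d v_i^2$ delivered by Poincar\'e. Your phrase ``duality/interpolation step between the $L^2$ lower bound and the local inverse inequality on the smallest element'' does not identify the right mechanism: a local inverse inequality on the smallest element sharpens nothing, and the factor $n^{-2s/d}$ is a global quantity reflecting the total number of degrees of freedom, not a property of $K_{\min}$ alone. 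The actual argument in \cite{AinsworthMcleanEtAl1999_ConditioningBoundaryElementEquations} uses a careful estimate in the fractional seminorm involving the behaviour of the bilinear form across \emph{all} elements simultaneously (roughly, showing that $\sum_i h_i^{d-2s}v_i^2$ cannot beat $a(v_h,v_h)$ by more than a factor $n^{2s/d}$ even under adversarial concentration of the nodal values), and this is not recoverable from the two ingredients you name. Since you explicitly defer this step to the reference, your proposal is an honest outline rather than a self-contained proof, and the one concrete claim you do make about the missing step is misleading.
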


The exponent of the growth of the condition number depends on the fractional order \(s\).
For small \(s\), the growth is slower.
For large \(s\), the growth of the condition number approaches that of the usual integer order Laplacian.
Moreover, \eqref{eq:cond} shows that the ill-conditioning becomes increasingly severe on non-uniform meshes.
Fortunately, estimate \eqref{eq:condPreconditioned} shows that the simple expedient of using diagonal scaling as a preconditioner removes the effects of the non-uniformity of the triangulation.
The conjugate gradient method applied to \cref{eq:systemPoisson} is therefore expected to converge in \(\sqrt{\kappa\left(\left(\mat{D}^{s}\right)^{-1}\mat{A}^{s}\right)}=\mathcal{O}\left(n^{s/d}\right)\) iterations.
Alternatively, a geometric multigrid solver can be used.
In the integer order case, multigrid iterations are used to good effect for solving systems involving both the mass matrix and the stiffness matrix which arises from the discretization of the regular Laplacian.
It is known that multigrid converges with a rate that is independent of the problem size for pseudo-differential operators of positive order in general, which in particular includes the fractional Laplacian \cite{SauterSchwab2010_BoundaryElementMethods,Hackbusch1985_MultiGridMethodsApplications,Hackbusch1994_IterativeSolutionLargeSparseSystemsEquations,AinsworthMclean2003_MultilevelDiagonalScalingPreconditioners}.
Although a single multigrid iteration is more expensive than a single iteration of conjugate gradient, a multigrid solver is more efficient, especially for larger fractional order \(s\) that are troublesome for the conjugate gradient method.

As mentioned earlier, the transient problem \eqref{eq:fractionalHeat} is often of more interest than the steady state problem \eqref{eq:fracPoisson}.
We first observe that using an explicit schemes for the time discretization will lead to a CFL condition on the time-step size \(\Delta t\) of the form \(\Delta t \leq C h^{2s}\).
The discretization using an implicit integration scheme in the time variable leads to systems of the form
\begin{align}
    \left(\mat{M}+\Delta t\mat{A}^{s}\right)\vec{u}=\vec{b},\label{eq:systemHeat}
\end{align}
where \(\mat{M}\) is the mass matrix.
In typical examples, the time-step will be chosen so that the error from the approximation in time matches the spatial order, and hence \(\Delta t\) depends on the spatial discretization.
The following theorem shows that if the time-step size \(\Delta t=\mathcal{O}\left(h^{2s}\right)\), then the condition number of the system is bounded by a constant and a simple conjugate gradient iteration can be used as an effective solver.
\begin{lemma}\label{lem:condFracLaplTime}
  For a shape regular and globally quasi-uniform family of triangulations \(\mathcal{P}_{h}\) and time-step \(\Delta t \leq 1\),
  \begin{align*}
    \kappa\left(\mat{M}+\Delta t\mat{A}^{s}\right)
    &\leq C \left(1+\frac{\Delta t}{h^{2s}}\right).
  \end{align*}
  More generally, for a family of triangulations that is only locally quasi-uniform and \(\Delta t\leq h_{\min}^{2s}n^{2s/d}\), it holds that
  \begin{align}
    \kappa\left(\mat{M}+\Delta t\mat{A}^{s}\right)
    &\leq C\left(\frac{h}{h_{\min}}\right)^{d} \left(1+\frac{\Delta t}{h^{2s}}\right). \label{eq:condHeat}
  \end{align}
  If \(\mat{D}^{0}\) is taken to be the diagonal part of the mass matrix and \(\Delta t \leq h^{2s}n^{2s/d}\), then
  \begin{align}
    \kappa\left(\left(\mat{D}^{0}\right)^{-1}\left(\mat{M}+\Delta t \mat{A}^{s}\right)\right)
    &\leq C\left(1+\frac{\Delta t}{h_{\min}^{2s}}\right) \label{eq:condPreconditionedHeat}
  \end{align}
\end{lemma}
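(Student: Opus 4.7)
The plan is to bound the extremal eigenvalues of $\mat{M}+\Delta t\,\mat{A}^{s}$ directly by combining the stiffness matrix bounds of \Cref{thm:condFracLapl} with standard spectral estimates for the mass matrix on a shape-regular, locally quasi-uniform family of triangulations. The ingredients I need for $\mat{M}$ are the nodal scaling $c\,h_{\min}^{d}\,\mat{I}\leq\mat{M}\leq C\,h^{d}\,\mat{I}$ together with the shape-regularity-only diagonal equivalence $c\,\mat{I}\leq(\mat{D}^{0})^{-1/2}\mat{M}(\mat{D}^{0})^{-1/2}\leq C\,\mat{I}$, both standard consequences of the fact that $\mat{M}_{ij}\sim h_{K}^{d}$ whenever the supports of $\phi_{i}$ and $\phi_{j}$ intersect.

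For \eqref{eq:condHeat}, I would first use that $\mat{A}^{s}$ is symmetric positive semidefinite to write $\mat{M}+\Delta t\,\mat{A}^{s}\geq\mat{M}\geq c\,h_{\min}^{d}\,\mat{I}$, giving the lower spectral bound $\lambda_{\min}\geq c\,h_{\min}^{d}$. The upper bounds $\mat{M}\leq C\,h^{d}\,\mat{I}$ and (from \Cref{thm:condFracLapl}) $\mat{A}^{s}\leq C\,h^{d-2s}\,\mat{I}$ add to produce $\lambda_{\max}\leq C(h^{d}+\Delta t\,h^{d-2s})$. Forming the ratio and factoring out $h_{\min}^{d}$ yields the claimed $(h/h_{\min})^{d}(1+\Delta t/h^{2s})$; the hypothesis $\Delta t\leq h_{\min}^{2s}n^{2s/d}$ enters only to certify that the mass-matrix lower bound dominates the stiffness-matrix lower bound $c\,\Delta t\,n^{-2s/d}h_{\min}^{d-2s}\,\mat{I}$ supplied by \Cref{thm:condFracLapl}, so that the simplified form displayed in \eqref{eq:condHeat} is indeed sharp. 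The globally quasi-uniform case with $\Delta t\leq 1$ then follows by specialising $h_{\min}\sim h$.

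For the preconditioned estimate \eqref{eq:condPreconditionedHeat}, I would analyse the symmetric matrix $(\mat{D}^{0})^{-1/2}(\mat{M}+\Delta t\,\mat{A}^{s})(\mat{D}^{0})^{-1/2}$. Its mass-matrix component is spectrally equivalent to $\mat{I}$ by the diagonal equivalence above. For the stiffness component, \Cref{thm:condFracLapl} provides $\mat{A}^{s}\leq C\,\mat{D}^{s}$, and the elementary diagonal comparison
\[
\bigl(\mat{D}^{0}\bigr)^{-1/2}\mat{D}^{s}\bigl(\mat{D}^{0}\bigr)^{-1/2}=\operatorname{diag}\!\left(\mat{D}^{s}_{ii}/\mat{D}^{0}_{ii}\right),\qquad \mat{D}^{s}_{ii}/\mat{D}^{0}_{ii}\sim h_{K_{i}}^{-2s}\leq C\,h_{\min}^{-2s},
\]
gives $(\mat{D}^{0})^{-1/2}\mat{A}^{s}(\mat{D}^{0})^{-1/2}\leq C\,h_{\min}^{-2s}\,\mat{I}$. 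Coupling this with the trivial lower bound coming from $\mat{A}^{s}\geq 0$ yields the asserted $C(1+\Delta t/h_{\min}^{2s})$.

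The most delicate step is the preconditioned bound: the natural diagonal scaling for $\mat{A}^{s}$ is its own diagonal $\mat{D}^{s}$ rather than the mass-matrix diagonal $\mat{D}^{0}$ used as the preconditioner, and bridging the two introduces $h_{\min}^{-2s}$ in place of $h^{-2s}$. The hypothesis $\Delta t\leq h^{2s}n^{2s/d}$ exists precisely to keep this $h_{\min}$-dependence from being amplified by a further factor of $n^{2s/d}$ when one tries to replace the $\Delta t$-independent lower bound by the sharper stiffness-matrix lower bound; verifying that all constants depend only on the shape-regularity of $\mathcal{P}_{h}$ and on $d,s$ is the main bookkeeping obstacle.
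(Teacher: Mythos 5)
Your argument is correct and follows essentially the same route as the paper: bound the extremal eigenvalues by combining the nodal estimates $c\,h_{\min}^{d}\mat{I}\leq\mat{M}\leq C\,h^{d}\mat{I}$ with the stiffness-matrix bounds of \Cref{thm:condFracLapl}, and for the preconditioned estimate compare $\mat{D}^{s}$ and $\mat{D}^{0}$ diagonally via $\mat{D}^{s}_{ii}/\mat{D}^{0}_{ii}\sim h_{K_{i}}^{-2s}$. The only cosmetic difference is that you drop the $\Delta t\,\mat{A}^{s}\geq 0$ contribution from the lower spectral bound at the outset, whereas the paper carries it along and discards it in the final simplification; the end result is the same.
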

\begin{proof}
  Since \(ch_{\min}^{d}\mat{I}\leq \mat{M} \leq Ch^{d}\mat{I}\), this also permits us to deduce that
  \begin{align*}
    c\left(h_{\min}^{d} +  \Delta t~ n^{-2s/d}h_{\min}^{d-2s}\right)\mat{I} \leq \mat{M}+\Delta t \mat{A}^{s} \leq C\left(h^{d} + \Delta t~ h^{d-2s}\right)\mat{I}
  \end{align*}
  and so
  \begin{align*}
    \kappa\left(\mat{M}+\Delta t\mat{A}^{s}\right)
    &\leq C \frac{h^{d}+\Delta t~ h^{d-2s}}{h_{\min}^{d}+\Delta t~ n^{-2s/d}h_{\min}^{d-2s}} \\
    &= C \left(\frac{h}{h_{\min}}\right)^{d} \frac{1+\Delta t~ h^{-2s}}{1+\Delta t~ n^{-2s/d}h_{\min}^{-2s}} \\
    &\leq  C \left(\frac{h}{h_{\min}}\right)^{d} \left(1+\frac{\Delta t}{h^{2s}}\right).
  \end{align*}
  For globally quasi-uniform meshes, \(h/h_{\min}=\mathcal{O}\left(1\right)\) and \(h_{\min}^{2s}n^{2s/d}=\mathcal{O}\left(1\right)\).

  Since \(\norm{\phi_{i}}_{L^{2}}^{2}=\mathcal{O}\left(h_{i}^{d}\right)\) and \(\norm{\phi_{i}}_{\widetilde{H}^{s}}^{2}=\mathcal{O}\left(h_{i}^{d-2s}\right)\) \cite{AinsworthMcleanEtAl1999_ConditioningBoundaryElementEquations} and \(c\mat{I}\leq\left(\mat{D}^{0}\right)^{-1/2}\mat{M}\left(\mat{D}^{0}\right)^{-1/2}\leq C\mat{I}\), we find
  \begin{align*}
    c\left(1 +  \Delta t~ n^{-2s/d}h^{-2s}\right)\mat{I} \leq \left(\mat{D}^{0}\right)^{-1/2}\left(\mat{M}+\Delta t \mat{A}^{s}\right)\left(\mat{D}^{0}\right)^{-1/2} \leq C\left(1 + \Delta t~ h_{\min}^{-2s}\right)\mat{I},
  \end{align*}
  and hence
  \begin{align*}
    \kappa\left(\left(\mat{D}^{0}\right)^{-1}\left(\mat{M}+\Delta t \mat{A}^{s}\right)\right)
    &\leq C\frac{1+\Delta t ~h_{\min}^{-2s}}{1+\Delta t~ n^{-2s/d}h^{-2s}}
      \leq C\left(1+\frac{\Delta t}{h_{\min}^{2s}}\right).
  \end{align*}
\end{proof}
\Cref{lem:condFracLaplTime} shows that if the time step \(\Delta t\) is chosen to be \(\Delta t =\mathcal{O}\left(h_{\min}^{2s}\right)\), then using the conjugate gradient method to solve the diagonally preconditioned system \eqref{eq:systemHeat} gives a uniform bound on the number of iterations.
However, if larger time steps are chosen, e.g. independent of \(h_{min}\), then \eqref{eq:condPreconditionedHeat} shows that the simple diagonal scaling becomes inefficient resulting in a number of iterations growing as \(\mathcal{O}\left(\Delta t/h_{\min}^{2s}\right)\).
Nevertheless, by applying a multigrid solver (as in the steady state case), one can restore a uniform bound on the number of iterations \cite{SauterSchwab2010_BoundaryElementMethods,Hackbusch1985_MultiGridMethodsApplications,Hackbusch1994_IterativeSolutionLargeSparseSystemsEquations,AinsworthMclean2003_MultilevelDiagonalScalingPreconditioners}.



In this section we have concerned ourselves with the effect the non-uniformity of the mesh and the fractional order have on the rate of convergence of iterative solvers.
This, of course, ignores the cost of carrying out the iteration.
The complexity of both multigrid and conjugate gradient iterations depends on how efficiently the matrix-vector product \(\mat{A}^{s}\vec{x}\) can be computed.
(The mass matrix in \cref{eq:systemHeat} has \(\mathcal{O}\left(n\right)\) entries, so its matrix-vector product scales linearly in the number of unknowns.)
Since all the basis functions \(\phi_{i}\) interact with each other, the matrix \(\mat{A}^{s}\) is dense and its matrix-vector product has complexity \(n^{2}\).
In the following section, we discuss a sparse approximation that will preserve the order of the approximation error of the fractional Laplacian, but display significantly better scaling in memory and operations for both assembly and matrix-vector product.

\section{Sparse Approximation of the Stiffness Matrix and of the Strong Form of the Fractional Laplacian}
\label{sec:cluster-method}

The presence of a factor \(\abs{\vec{x}-\vec{y}}^{-d-2s}\) in the integrand in the expression for the entries of the stiffness matrix means that the contribution of pairs of elements that are well separated is significantly smaller than the contribution arising from pairs of elements that are close to one another.
The evaluation of the strong form of the fractional Laplacian as given in \cref{eq:8,eq:9} involves integration over all edges of the mesh weighted by the same kernel.
This suggests the use of the \emph{cluster method} from the boundary element literature, whereby such far field contributions are replaced by less expensive low-rank blocks rather than computing and storing the all individual entries from the original matrix.
Conversely, the near-field contributions are more significant but involve only local couplings and hence the cost of storing the individual entries is a practical proposition.
A full discussion of the clustering method is beyond the scope of the present work but can be found in \cite[Chapter 7]{SauterSchwab2010_BoundaryElementMethods}.
Here, we confine ourselves to stating only the necessary definitions and steps needed to describe our approach.

\begin{definition}[\cite{SauterSchwab2010_BoundaryElementMethods}]
  A \emph{cluster} is a union of one or more indices from the set of degrees of freedom \(\mathcal{I}\).
  The nodes of a hierarchical cluster tree \(\mathcal{T}\) are clusters.
  The set of all nodes is denoted by \(T\) and satisfies
  \begin{enumerate}
  \item \(\mathcal{I}\) is a node of \(\mathcal{T}\).
  \item The set of leaves \(\text{Leaves}(\mathcal{T})\subset T\) corresponds to the degrees of freedom \(i\in\mathcal{I}\) and is given by
    \begin{align*}
      \text{Leaves}(\mathcal{T}) := \left\{\left\{i\right\} : i\in\mathcal{I}\right\}.
    \end{align*}
  \item For every \(\sigma\in T\setminus \text{Leaves}\left(\mathcal{T}\right)\) there exists a minimal set \(\Sigma\left(\sigma\right)\) of nodes in \(T\setminus\left\{\sigma\right\}\) that satisfies
    \begin{align*}
      \sigma = \bigcup_{\tau\in\Sigma\left(\sigma\right)}\tau.
    \end{align*}
    The set \(\Sigma\left(\sigma\right)\) is called the sons of \(\sigma\).
    The edges of the cluster tree \(\mathcal{T}\) are the pairs of nodes \(\left(\sigma,\tau\right)\in T\times T\) such that \(\tau\in \Sigma\left(\sigma\right)\).
  \end{enumerate}
\end{definition}
An example of a cluster tree for a one-dimensional problem is given in \Cref{fig:clusterTree}.

\begin{figure}
  \centering
  \includegraphics{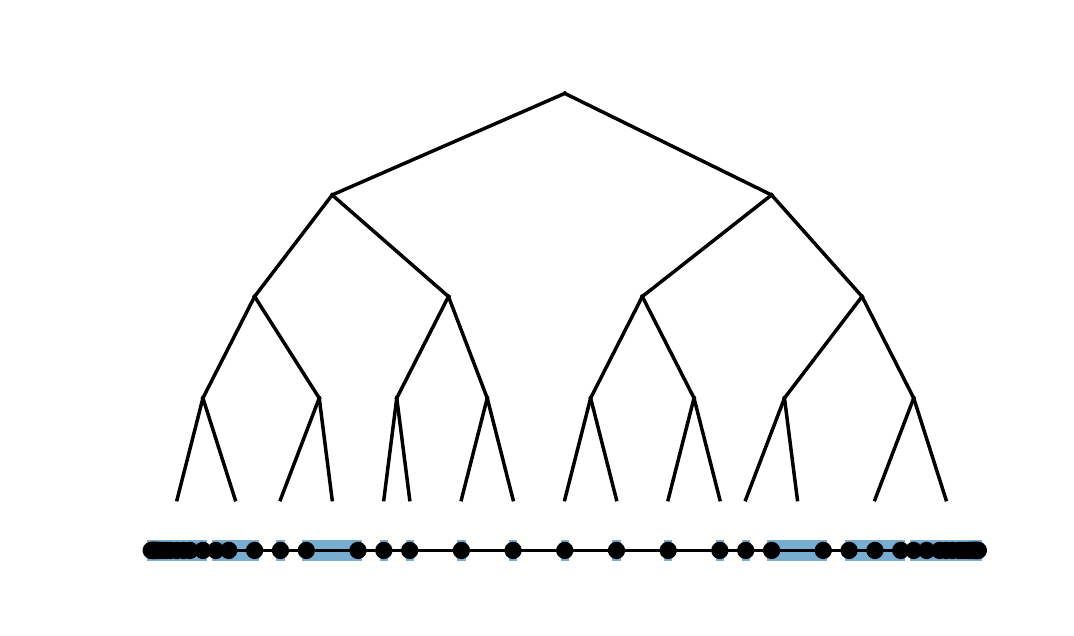}
  \caption{
    Cluster tree for a one-dimensional problem.
    The mesh with its nodal degrees of freedom is plotted at the bottom.
    The leaf clusters are coloured in blue.
  }
  \label{fig:clusterTree}
\end{figure}

\begin{definition}[\cite{SauterSchwab2010_BoundaryElementMethods}]
  The \emph{cluster box} \(Q_{\sigma}\) of a cluster \(\sigma\in T\) is the minimal hyper-cube which contains \(\bigcup_{i\in\sigma}\operatorname{supp} \phi_{i}\).
  The \emph{diameter} of a cluster is the diameter of its cluster box \(\diam \sigma:= \sup_{\vec{x},\vec{y}\in Q_{\sigma}} \abs{\vec{x}-\vec{y}}\).
  The \emph{distance} of two clusters \(\sigma\) and \(\tau\) is \(\dist{\sigma,\tau}:=\inf_{\vec{x}\in Q_{\sigma}, \vec{y}\in Q_{\tau}}\abs{\vec{x}-\vec{y}}\).
  The subspace \(V_{\sigma}\) of \(V_{h}\) is defined as \(V_{\sigma}:=\operatorname{span}\left\{\phi_{i}\mid i\in\sigma\right\}\).
\end{definition}

For given \(\eta>0\), a pair of clusters \(\left(\sigma,\tau\right)\) is called \emph{admissible}, if
\begin{align*}
  \eta\dist{\sigma,\tau}& \geq \max\left\{\diam{\sigma}, \diam{\tau}\right\}.
\end{align*}
The admissible cluster pairs can be determined recursively.
Cluster pairs that are not admissible and have no admissible sons are part of the near field and are assembled into a sparse matrix.
The admissible cluster pairs for a one dimensional problem are shown in \Cref{fig:clusterPairs}.

\begin{figure}
  \centering
  \includegraphics{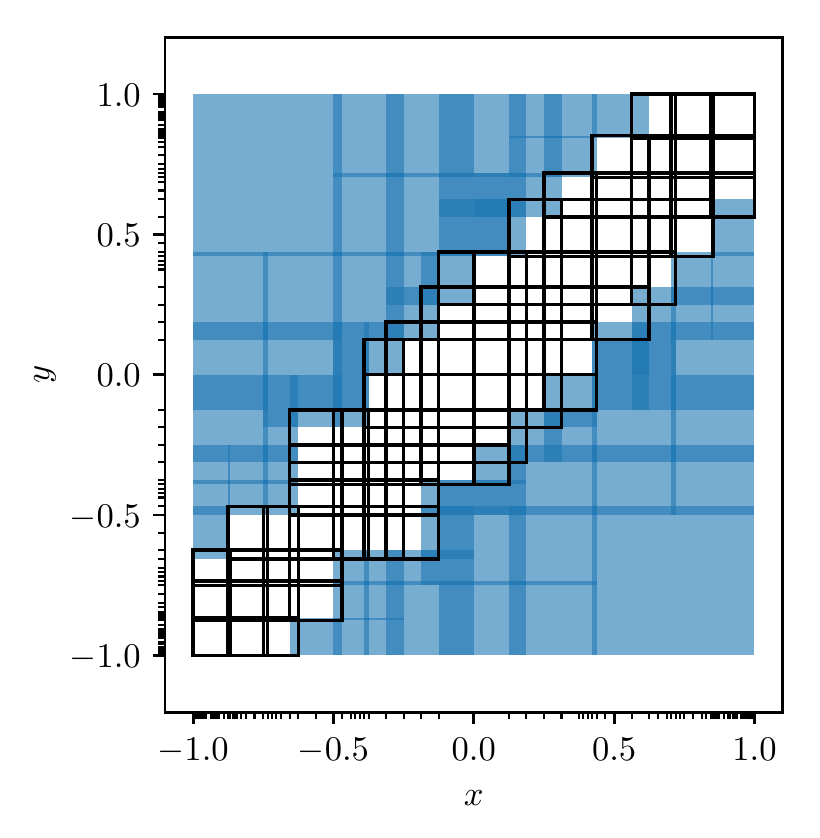}
  \caption{
    Cluster pairs for a one-dimensional problem.
    The cluster boxes of the admissible cluster pairs are coloured in light blue, and their overlap in darker blue.
    The diagonal cluster pairs are not admissible and are not approximated, but assembled in full.
    The nodes of the nodal degrees of freedom are plotted on \(x\)- and \(y\)-axis.
  }
  \label{fig:clusterPairs}
\end{figure}

For admissible pairs of clusters \(\sigma\) and \(\tau\) and any functions \(\phi,\psi\) with \(\operatorname{supp}\phi\subset Q_{\sigma}\) and \(\operatorname{supp}\psi\subset Q_{\tau}\), the bilinear form \(a\) evaluates to
\begin{align*}
  a\left(\phi,\psi\right) =-C\left(d,s\right)\int_{\Omega} \int_{\Omega} k\left(\vec{x},\vec{y}\right)\phi\left(\vec{x}\right) \psi\left(\vec{y}\right),
\end{align*}
since \(Q_{\sigma}\cap Q_{\tau}=\emptyset\).
Here, \(k\left(\vec{x},\vec{y}\right)=\abs{\vec{x}-\vec{y}}^{-(d+2s)}\) is the interaction kernel, which can be approximated on \(Q_{\sigma}\times Q_{\tau}\) using Chebyshev interpolation of order \(m\) in every spatial dimension by
\begin{align*}
  k_{m}\left(\vec{x},\vec{y}\right)&= \sum_{\alpha,\beta=1}^{m^{d}} k\left(\vec{\xi}_{\alpha}^{\sigma},\vec{\xi}_{\beta}^{\tau}\right) L_{\alpha}^{\sigma}\left(\vec{x}\right) L_{\beta}^{\tau}\left(\vec{y}\right).
\end{align*}
\(L_{\alpha}^{\sigma}\) are the tensor Chebyshev polynomials of order \(m\) on the cluster box \(Q_{\sigma}\), and \(\vec{\xi}_{\alpha}^{\sigma}\) are the tensor Chebyshev points on \(Q_{\sigma}\) with \(L_{\alpha}^{\sigma}\left(\vec{\xi}_{\beta}^{\sigma}\right)=\delta_{\alpha\beta}\).
This leads to the following approximation:
\begin{align*}
  a\left(\phi,\psi\right)
  &\approx  -C\left(d,s\right) \sum_{\alpha,\beta=1}^{m^{d}} k\left(\vec{\xi}_{\alpha}^{\sigma},\vec{\xi}_{\beta}^{\tau}\right) \int_{\operatorname{supp} \phi} \phi\left(\vec{x}\right)L_{\alpha}^{\sigma}\left(\vec{x}\right) \dd \vec{x} \int_{\operatorname{supp} \psi} \psi\left(\vec{y}\right) L_{\beta}^{\tau}\left(\vec{y}\right) \dd\vec{y}.
\end{align*}
Expressions of the type \(\int_{\operatorname{supp} \phi} \phi\left(\vec{x}\right)L_{\alpha}^{\sigma}\left(\vec{x}\right) \dd \vec{x}\) can be computed recursively starting from the finest level of the cluster tree, since for \(\tau\in\Sigma\left(\sigma\right)\) and \(\vec{x}\in Q_{\tau}\)
\begin{align*}
  L_{\alpha}^{\sigma}\left(\vec{x}\right)&=\sum_{\beta}L_{\alpha}^{\sigma}\left(\vec{\xi}_{\beta}^{\tau}\right) L_{\beta}^{\tau}\left(\vec{x}\right).
\end{align*}
This means that for all leaves \(\sigma=\left\{i\right\}\), and all \(1\leq\alpha\leq m^{d}\), the \emph{basis far-field coefficients}
\begin{align*}
  \int_{\operatorname{supp} \phi\cap Q_{\sigma}} \phi\left(\vec{x}\right)L_{\alpha}^{\sigma}\left(\vec{x}\right) \dd \vec{x}
\end{align*}
need to be evaluated (e.g. by \(m+1\)-th order Gaussian quadrature).
Moreover, the \emph{shift coefficients}
\begin{align*}
  L_{\alpha}^{\sigma}\left(\vec{\xi}_{\beta}^{\tau}\right)
\end{align*}
for \(\tau\in\Sigma\left(\sigma\right)\) must be computed, as well as the kernel approximations
\begin{align*}
  k\left(\vec{\xi}_{\alpha}^{\sigma},\vec{\xi}_{\beta}^{\tau}\right)
\end{align*}
for every admissible pair of clusters \(\left(\sigma,\tau\right)\).
We refer the reader to \cite{SauterSchwab2010_BoundaryElementMethods} for further details.

In order for the consistency error of the cluster method to be dominated by the discretization error of the method, the interpolation order \(m\) essentially needs to grow with \(\abs{\log h_{\min}}\).
For further details, we refer to \cite{SauterSchwab2010_BoundaryElementMethods,AinsworthGlusa2017_TowardsEfficientFiniteElement}.

By following the arguments in \cite{SauterSchwab2010_BoundaryElementMethods}, it can be shown that the number of near field entries scales linearly in \(n\).
The same conclusion holds for the number of far field cluster pairs.
Since the four dimensional integral contributions \(a^{K\times \tilde{K}}\) are evaluated using Gaussian quadrature rules with at most \(k=\mathcal{O}\left(\log n\right)\) quadrature nodes per dimension, the assembly of the near field contributions scales with \(n\log^{2d}n\) .
The far field kernel approximations and the shift coefficients have size \(m^{2d}=\mathcal{O}\left(\log^{2d}n\right)\), and are also calculated in \(\log^{2d}n\) complexity.
This means that all the kernel approximations and shift coefficients are obtained in \(n\log^{2d}n\) time.
Finally, the \(nm^{d}\) basis far-field coefficients require the evaluation of integrals using \(m+1\)-th order Gaussian quadrature, leading to a complexity of \(n\log^{2d}n\) as well.
The overall complexity of the cluster method is therefore \(\mathcal{O}\left(n\log^{2d}n\right)\), and the sparse approximation requires \(\mathcal{O}\left(n\log^{2d}n\right)\) memory.
In practice, this means that the assembly of the near-field matrix dominates the other steps but involves only local computations.

The computation of the matrix-vector product involving upward and downward recursion in the cluster tree and multiplication by the kernel approximations can also be shown to scale with \(\mathcal{O}\left(n\log^{2d}n\right)\).

The same principle can be used to evaluate the strong form in all the quadrature nodes necessary for the computation of the BR error indicators.
According to \cref{eq:8,eq:9}, a naive implementation scales as \(\mathcal{O}\left(n^{2}\right)\).
Many of the components that are used for the cluster method in the matrix-vector product can be reused.
Let \(u=\sum_{i\in\mathcal{I}_{h}}u_{i}\phi_{i}\in V_{h}\), and let \(\vec{x}_{0}\in K_{0}\) be a quadrature node in the evaluation of the local \(L^{2}\)-norms in \eqref{eq:BRindicators}.
Moreover, let \(i_{0}\in\mathcal{I}_{h}\) be the closest degree of freedom to \(\vec{x}_{0}\) on \(K_{0}\).

Let \(\left(\sigma,\tau\right)\) be an admissible cluster pair such that \(i_{0}\in\sigma\), and set \(u_{\tau}=\sum_{j\in\tau}u_{j}\phi_{j}\), so that \(\operatorname{supp} u_{\tau}\subset Q_{\tau}\).
The contribution to the fractional Laplacian is approximated by
\begin{align*}
  -\int_{Q_{\tau}} k\left(\vec{x},\vec{y}\right) u_{\tau}\left(\vec{y}\right) \dd\vec{y}
  &= -\int_{Q_{\sigma}} \int_{Q_{\tau}} k\left(\vec{x},\vec{y}\right) \delta\left(\vec{x}-\vec{x}_{0}\right)u_{\tau}\left(\vec{y}\right) \dd\vec{y} \dd{\vec{x}} \\
  &\approx \sum_{\alpha,\beta=1}^{m^{d}} k\left(\vec{\xi}_{\alpha}^{\sigma},\vec{\xi}_{\beta}^{\tau}\right) \int_{Q_{\sigma}} L_{\alpha}^{\sigma}\left(\vec{x}\right)\delta\left(\vec{x}-\vec{x}_{0}\right) \dd\vec{x} \int_{Q_{\tau}} L_{\beta}^{\tau}\left(\vec{y}\right)u_{\tau}\left(\vec{y}\right) \dd\vec{y} \\
  &= \sum_{\alpha,\beta=1}^{m^{d}} k\left(\vec{\xi}_{\alpha}^{\sigma},\vec{\xi}_{\beta}^{\tau}\right) L_{\alpha}^{\sigma}\left(\vec{x}_{0}\right) \int_{Q_{\tau}} L_{\beta}^{\tau}\left(\vec{y}\right)u_{\tau}\left(\vec{y}\right) \dd\vec{y}.
\end{align*}
The integrals \(\int_{Q_{\tau}} L_{\beta}^{\tau}\left(\vec{y}\right)u_{\tau}\left(\vec{y}\right) \dd\vec{y}\) already appeared in the computation of the matrix-vector product, and can be recursively computed.
The same holds for the factors \(L_{\alpha}^{\sigma}\left(\vec{x}_{0}\right)\).

On the other hand, if the cluster pair \(\left(\sigma,\tau\right)\) is not admissible and therefore in the near-field, then formulas \cref{eq:8,eq:9} with \(\Omega\) set to \(Q_{\sigma}\) can be used to evaluate the fractional Laplacian.

This shows that the evaluation of the strong form of the fractional Laplacian on all the quadrature nodes \(\left\{\vec{x}_{j}\right\}_{j}\) can be seen as an application of the cluster method with respect to the sets of functions \(\left\{\delta\left(\cdot-\vec{x}_{j}\right)\right\}_{j}\) and \(\left\{\phi_{i}\right\}_{i}\).

\section{Numerical Examples}
\label{sec:numerical-examples}

For \(s\in(0,1)\), we consider the fractional Poisson problem
\begin{align*}
  \left(-\Delta\right)^{s}u = f &\text{ in }\Omega=B(0,1),\\
  u=0&\text{ in }\Omega^{c}
\end{align*}
in \(d\in\left\{1,2\right\}\) dimensions.
Closed-form solutions to the problem posed on a disc are available.
In \(d=1\) dimensions, the solution when the right-hand side is
\begin{align*}
  f_{n,0}^{1D}&= 2^{2s}\Gamma\left(1+s\right)^{2}\binom{s+n-1/2}{s}\binom{s+n}{s} P_{n}^{(s,-1/2)}\left(2x^{2}-1\right), &n\geq0
\end{align*}
is given by
\begin{align*}
  u_{n,0}^{1D}&= P_{n}^{(s,-1/2)}\left(2x^{2}-1\right) \left(1-x^{2}\right)^{s}_{+},
\end{align*}
where \(\binom{x}{y}=\frac{\Gamma\left(x+1\right)}{\Gamma\left(y+1\right)\Gamma\left(x-y+1\right)}\) are generalised binomial coefficients, \(P_{n}^{(\alpha,\beta)}\) are the Jacobi polynomials, and \(x_{+}=\max\{0,x\}\).
Moreover, when the right-hand side is given by
\begin{align*}
  f_{n,1}^{1D}&= 2^{2s}\Gamma\left(1+s\right)^{2}\binom{s+n+1/2}{s}\binom{s+n}{s} x P_{n}^{(s,1/2)}\left(2x^{2}-1\right), &n\geq0
\end{align*}
then the solution is
\begin{align*}
  u_{n,1}^{1D}&= x P_{n}^{(s,1/2)}\left(2x^{2}-1\right) \left(1-x^{2}\right)^{s}_{+}
\end{align*}
as shown in \cite{DydaKuznetsovEtAl2016_FractionalLaplaceOperatorMeijerGFunction}.
Turning to \(d=2\) dimensions, the solution when the right-hand side is
\begin{align*}
  f_{n,\ell}^{2D}
  &= 2^{2s}\Gamma\left(1+s\right)^{2}\binom{s+n+\ell}{s}\binom{s+n}{s} r^{\ell}\cos\left(\ell\theta\right) P_{n}^{(s,\ell)}\left(2r^{2}-1\right),
  &\ell,n\geq0
\end{align*}
is given by
\begin{align*}
  u_{n,\ell}^{2D} = r^{\ell}\cos\left(\ell\theta\right) P_{n}^{(s,\ell)}\left(2r^{2}-1\right)\left(1-r^{2}\right)^{s}_{+}.
\end{align*}
For more details on how these analytical solutions are determined, see \cite{DydaKuznetsovEtAl2016_FractionalLaplaceOperatorMeijerGFunction}.
Observe that in both the one-dimensional and the two-dimensional cases, the solutions contains a term which behaves like \(\delta\left(\vec{x}\right)^{s}\), where \(\delta\left(\vec{x}\right)\) is the distance from \(\vec{x}\in\Omega\) to the boundary.

\subsection{One-Dimensional Examples}
\label{sec:one-dimens-example}

We first solve the fractional Poisson problem with constant right-hand side \(f=f_{0,0}^{1D}\) and solution \(u=u_{0,0}^{1D}\) for \(s=0.25\) and \(s=0.75\).
According to the regularity results in \Cref{thm:regularity}, \(u\) is in \(H^{s+1/2-\varepsilon}\left(\Omega\right)\), so that we expect essentially \(h^{1/2}=\mathcal{O}\left(n^{-1/2}\right)\) convergence if a globally quasi-uniform mesh is used.
Using graded meshes, the expected rate in \(d=1\) dimensions is \(n^{s-2}\), as shown in \cref{eq:predictedRate1D}.

We adaptively refine the mesh based on the BR error indicators and a maximum marking strategy with threshold \(0.8\).
The cluster method is used both for the stiffness matrix and for the evaluation of the strong form of the fractional Laplacian.
The error plots in \Cref{fig:errorAdaptive1d} clearly display \(n^{s-2}\) convergence in \(\widetilde{H}^{s}\)-norm, both for \(s=0.25\) and for \(s=0.75\), as compared to the \(n^{-1/2}\) convergence obtained by uniform mesh refinement.
This means that the optimal rate of convergence for \(d=1\) is achieved.
The measured \(L^{2}\)-convergence is of order \(n^{-1/2-s}\) for uniform refinement and \(n^{-2}\) for adaptive refinement for both values of \(s\).

\begin{figure}
  \centering
  \includegraphics{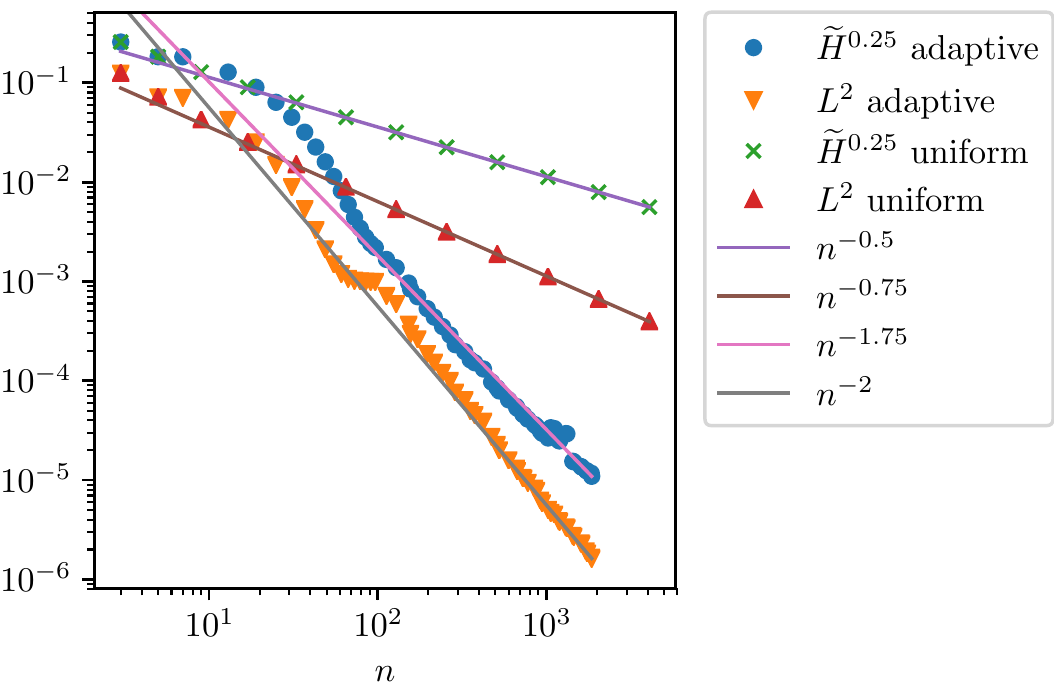}
  \includegraphics{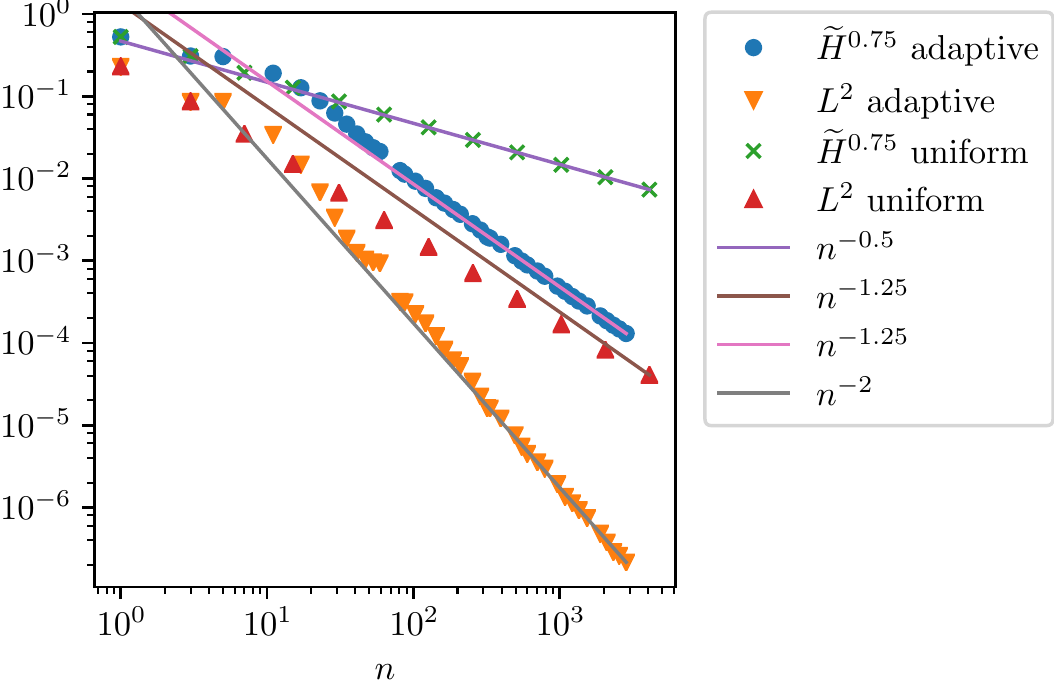}
  \caption{
    \(\widetilde{H}^{s}\)- and \(L^{s}\)-error for the one-dimensional problem with constant right-hand side \(f=f_{0,0}^{1D}\) and for uniform and adaptive refinement.
    \(s=0.25\) at the top, \(s=0.75\) at the bottom.
    It can be seen that while uniform refinement results in convergence in \(\widetilde{H}^{s}\)-norm with rate \(n^{-1/2}\), adaptive refinement achieves the optimal rate of \(n^{s-2}\).
    The convergence in \(L^{2}\)-norm is of order \(n^{-1/2-s}\) for uniform refinement, and of order \(n^{-2}\) for adaptive refinement.
  }
  \label{fig:errorAdaptive1d}
\end{figure}

In \Cref{fig:timingsAdaptive1d}, we plot the number of degrees of freedom \(n\) against timing results for the assembly of the stiffness matrix, the solution of the linear system, and the computation of the error indicators \eqref{eq:BRindicators}.
It can be observed that all three scale with \(n\left(\log n\right)^{2}\).

\begin{figure}
  \centering
  \includegraphics{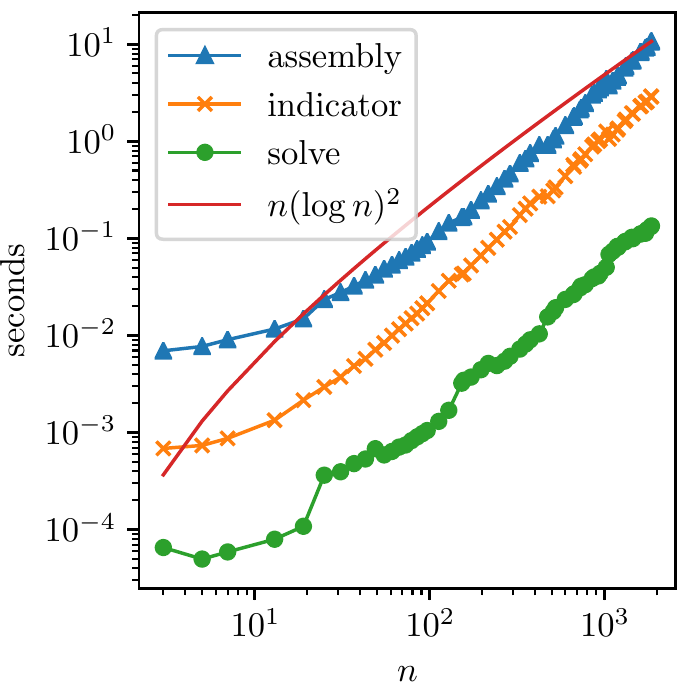}
  \includegraphics{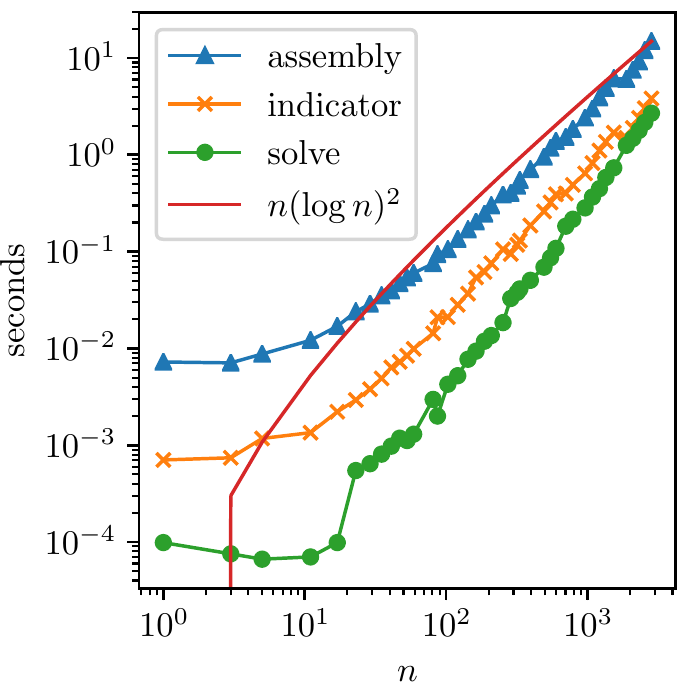}
  \caption{
    Timings for assembly of the stiffness matrix, solution of linear system and computation of the error indicators for the one-dimensional problem with constant right-hand side \(f=f_{0,0}^{1D}\).
    \(s=0.25\) on the left, \(s=0.75\) on the right.
   All steps can be seen to scale quasi-linearly with the number of unknowns \(n\).
  }
  \label{fig:timingsAdaptive1d}
\end{figure}

Next, we consider the solution of the problem with right-hand side \(f(x)=\operatorname{sign} x\in H^{1/2-\varepsilon}\left(\Omega\right)\).
Based on decomposition of \(f\) with respect to \(f_{n,\ell}^{1D}\), the analytic solution \(u\) is found to be
\begin{align*}
  u
  &=2^{-2s}\sum_{n\geq0} (-1)^{n}\frac{2n+s+3/2}{n+1/2} \frac{\binom{n+s+1/2}{-1/2}}{\binom{n+s+1/2}{s}\binom{n+s}{s}} x P_{n}\left(2x^{2}-1\right) \left(1-x^{2}\right)_{+}^{s}
\end{align*}
so that the discretization error can be calculated as
\begin{align}
  \norm{u-u_{h}}_{\V}^{2}=a\left(u-u_{h},u-u_{h}\right) = \left(f,u\right) - \left(f,u_{h}\right),\label{eq:errorEq1d}
\end{align}
with
\begin{align*}
  \left(f,u\right)
  &= \frac{2^{1-2s}}{(2s+1)\Gamma\left(1+s\right)^{2}}
\end{align*}

Solutions for \(s=i/10\), \(i=1,\dots,9\) are plotted in \Cref{fig:solutions1D}.
For smaller values of \(s\), the transition at the boundary and at \(x=0\) is more pronounced, whereas for larger values of \(s\) the solutions resemble the solution of the standard integer-order Poisson problem.

\begin{figure}
  \centering
  \includegraphics{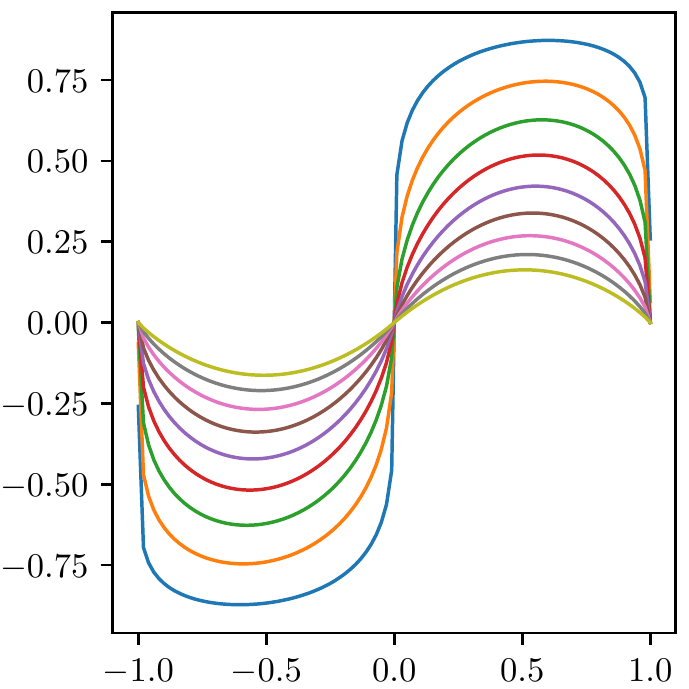}
  \caption{
    Solutions to the one-dimensional fractional Poisson problem with right-hand side \(f\left(x\right)=\operatorname{sign}x\) for \(s=i/10\), \(i=1,\dots,9\).
    For smaller values of \(s\), the transition at the boundary and at \(x=0\) is more pronounced, whereas for larger values of \(s\) the solutions resemble the solution of the standard integer-order Poisson problem.
}
  \label{fig:solutions1D}
\end{figure}

The error plots in \Cref{fig:errorAdaptive1dplateau} show that \(n^{s-2}\)-convergence in \(\widetilde{H}^{s}\)-norm is also achieved for the discontinuous right-hand side \(f(x)=\operatorname{sign}x\).
\begin{figure}
  \centering
  \includegraphics{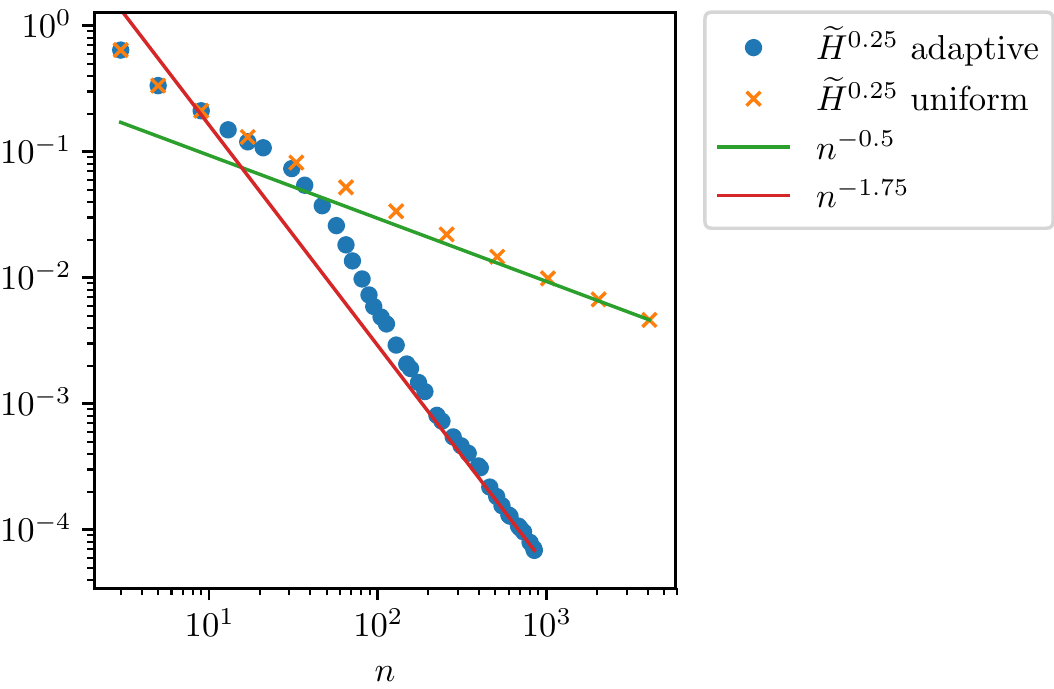}
  \includegraphics{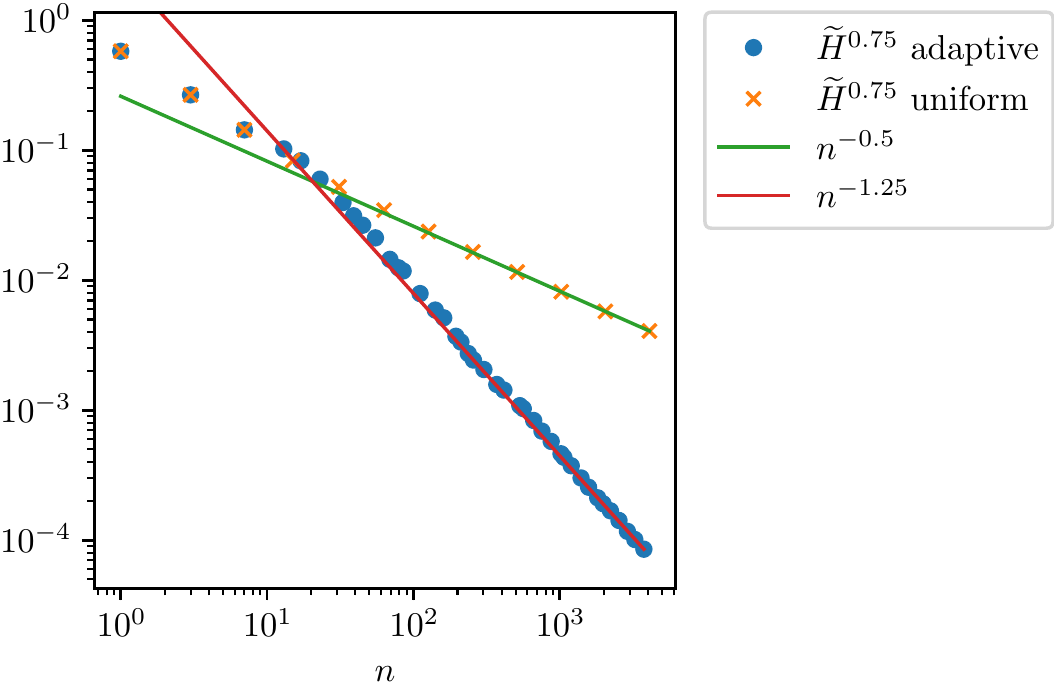}
  \caption{
    \(\widetilde{H}^{s}\)-error for the one-dimensional problem with discontinuous right-hand side \(f(x)=\operatorname{sign}x\) and for uniform and adaptive refinement.
    \(s=0.25\) at the top, \(s=0.75\) at the bottom.
    It can be seen that while uniform refinement results in convergence in \(\widetilde{H}^{s}\)-norm with rate \(n^{-1/2}\), adaptive refinement achieves the optimal rate of \(n^{s-2}\).
  }
  \label{fig:errorAdaptive1dplateau}
\end{figure}

\subsection{Two-Dimensional Examples on the Unit Disc}
\label{sec:two-dimens-example}

We consider the two-dimensional fractional Poisson problem on a disc with constant right-hand side \(f=f_{0,0}^{2D}\).
The solutions \(u\) for \(s=0.25\) and \(s=0.75\) are shown in \Cref{fig:solutions}.

Using a globally quasi-uniform mesh, we would obtain \(h^{1/2}=\mathcal{O}\left(n^{-1/4}\right)\) convergence.
Using a graded mesh, \(n^{-1/2}\) convergence was predicted, owing to the fact that the mesh is required to be locally quasi-uniform.
Again, we adaptively refine the mesh, based on the error indicators \eqref{eq:BRindicators}.
The obtained meshes (\Cref{fig:meshes}) are highly refined in regions close to the boundary, where the solution is singular.
\begin{figure}
  \centering
  \includegraphics{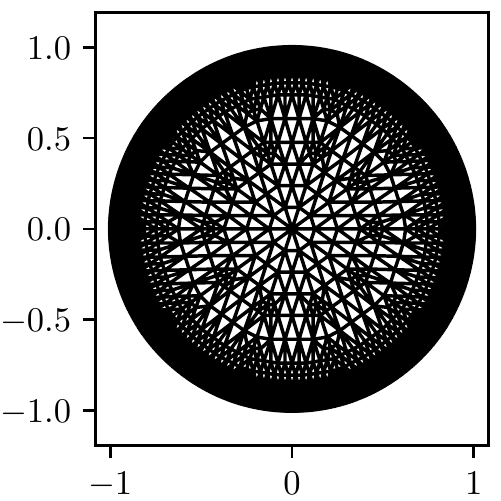}
  \includegraphics{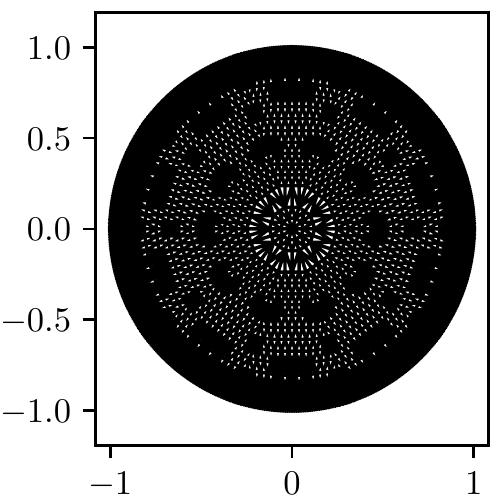}
  \caption{
    Adaptively refines meshes for the two-dimensional problem with constant right-hand side.
    \(s=0.25\) on the left, \(s=0.75\) on the right.}
  \label{fig:meshes}
\end{figure}

We plot the \(\V\)-error in \Cref{fig:errorAdaptive2d}.
It can be observed that \(n^{-1/2}\) convergence is obtained, thus matching the optimal rate obtained using a graded mesh for a constant right-hand side in \eqref{eq:predictedRate2D}.
As in the one-dimensional case, the rate in \(L^{2}\)-norm is \(s\) orders higher than the rate in \(\widetilde{H}^{s}\)-norm.
\begin{figure}
  \centering
  \includegraphics{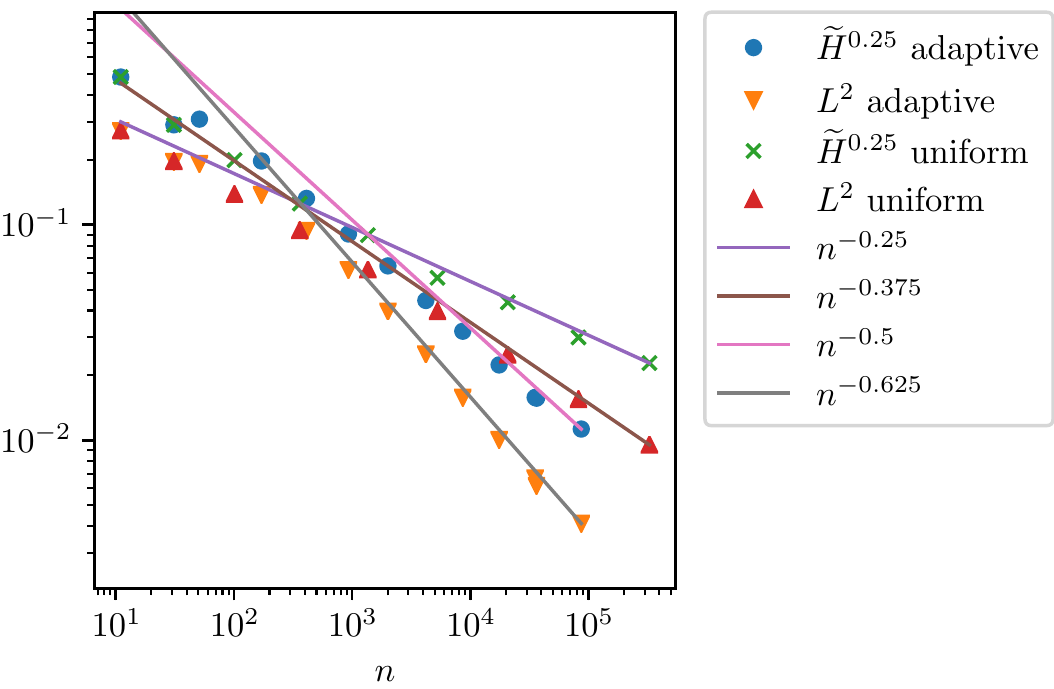}
  \includegraphics{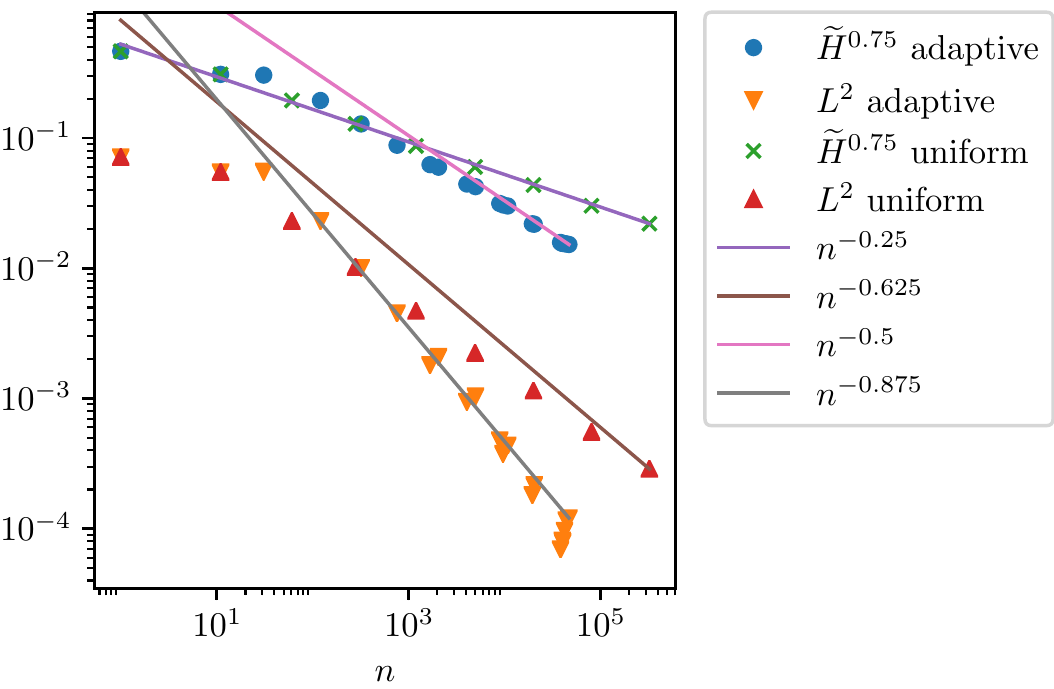}
  \caption{
  \(\widetilde{H}^{s}\)- and \(L^{s}\)-error for the two-dimensional problem with constant right-hand side \(f=f_{0,0}^{2D}\) and for uniform and adaptive refinement.
    \(s=0.25\) at the top, \(s=0.75\) at the bottom.
    It can be seen that while uniform refinement results in convergence in \(\widetilde{H}^{s}\)-norm with rate \(n^{-1/4}\), adaptive refinement achieves the optimal rate of \(n^{-1/2}\).
    The convergence in \(L^{2}\)-norm is of order \(n^{-1/4-s/2}\) for uniform refinement, and of order \(n^{-1/2-s/2}\) for adaptive refinement.}
  \label{fig:errorAdaptive2d}
\end{figure}
By considering the timing results shown in \Cref{fig:timingsAdaptive2d}, we see that all phases of the adaptive refinement loop scale with \(n \left(\log n\right)^{4}\).
The computation of the error indicators is cheaper than the assembly of the system matrix, and the solution of the linear system is significantly cheaper than both.
Relative to the matrix assembly, the computation of the error indicators is more expensive than for the one dimensional problem, since it involves numerical quadrature over element edges instead of pointwise evaluations.
\begin{figure}
  \centering
  \includegraphics{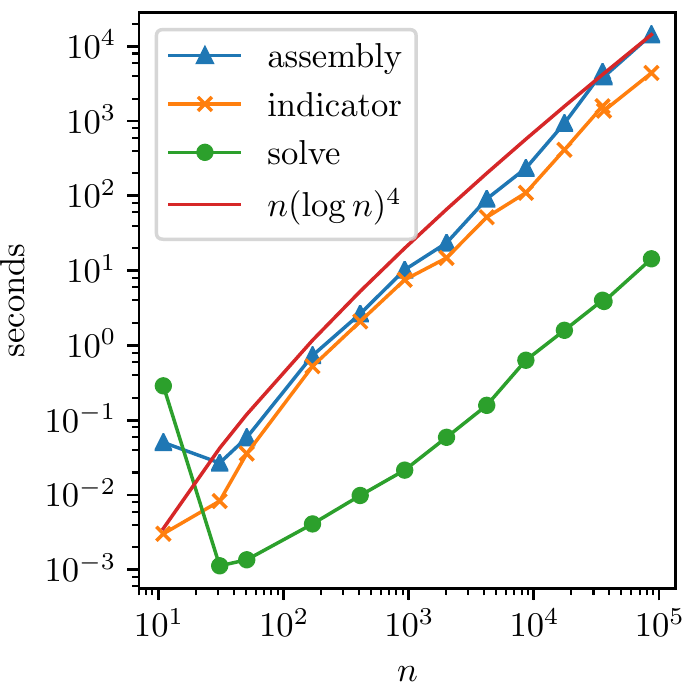}
  \includegraphics{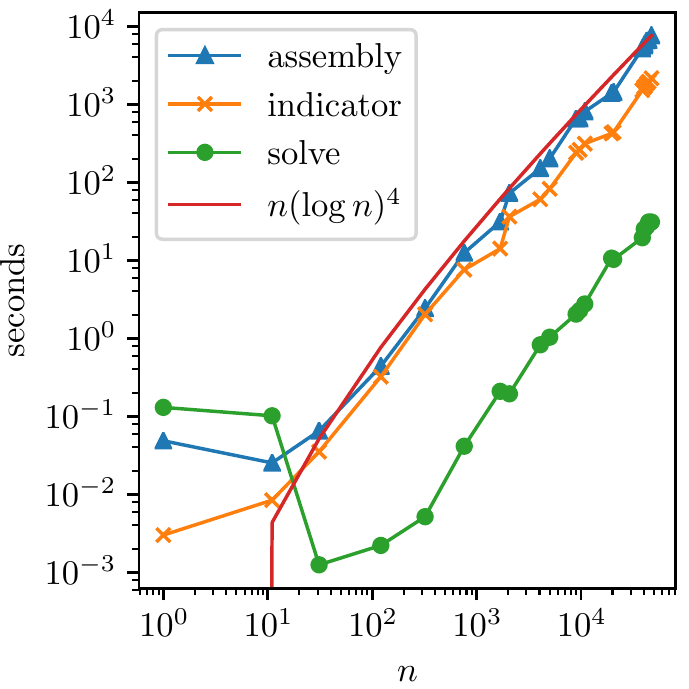}
  \caption{
    Timings for assembly of the stiffness matrix, solution of linear system and computation of the error indicators for the two-dimensional problem with constant right-hand side \(f=f_{0,0}^{2D}\).
    \(s=0.25\) on the left, \(s=0.75\) on the right.
   All steps can be seen to scale quasi-linearly with the number of unknowns \(n\).}
  \label{fig:timingsAdaptive2d}
\end{figure}

We now consider the solution of the problem with right-hand side \(f(r,\theta)=1_{\abs{\theta}<\pi/2}\in H^{1/2-\varepsilon}\left(\Omega\right)\).
Based on decomposition of \(f\) with respect to \(f_{n,\ell}^{2D}\), the analytic solution \(u\) is found to be
\begin{align*}
  u
  &= 2^{-2s}\frac{\left(1-r^{2}\right)^{s}_{+}}{\Gamma\left(1+s\right)^{2}}\left\{
    \frac{1}{2}  \right.\\
  &\qquad \left.+\sum_{n\geq0,~\ell\geq1 \text{ and odd}} \frac{(-1)^{(\ell+1)/2+n}(2n+s+l+1)}{\pi(n+l/2)(s+1)} \frac{\cos\left(\ell\theta\right) r^{\ell} P_{n}^{(s,\ell)}\left(2r^{2}-1\right)}{\binom{n+s+l/2+1}{n+l/2}\binom{s+n}{n}}
    \right\},
\end{align*}
so that the discretization error can be calculated as
\begin{align*}
  \norm{u-u_{h}}_{\V}^{2}=a\left(u-u_{h},u-u_{h}\right) = \left(f,u\right) - \left(f,u_{h}\right),
\end{align*}
with
\begin{align*}
  \left(f,u\right)
    &= 2^{-2s}\left\{\frac{\pi}{4(s+1)\Gamma\left(s+1\right)^{2}}  \right.\\
      &\qquad\left.-\frac{1}{\pi} G_{4,4}^{3,2}\left(
      \left.
    \begin{array}{cccc}
      1,& 1+s/2,&5/2+s,&5/2+s \\
        2,&1/2,&1/2,&2+s/2
    \end{array}
            \right|     -1
\right)\right\},
\end{align*}
where \(G\) is the Meijer G-function \cite{MeijerG}.

The solutions \(u\) for \(s=0.25\) and \(s=0.75\) are shown in \Cref{fig:solutionsPlateau}.
\begin{figure}
  \centering
  \includegraphics{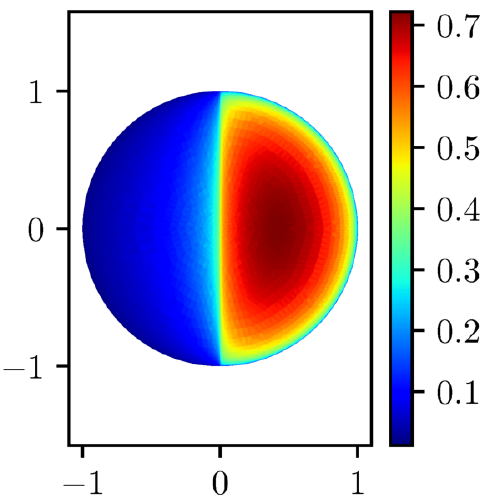}
  \includegraphics{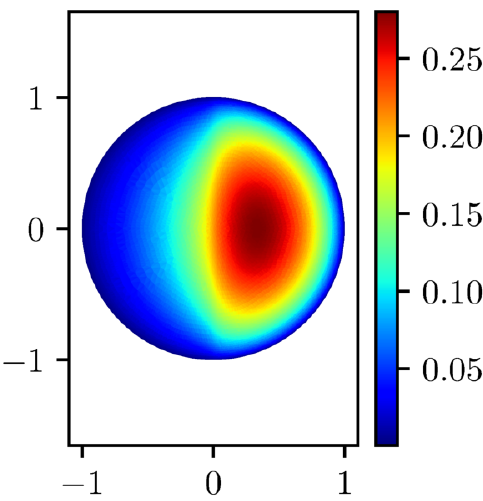}
  \caption{Solutions \(u\) corresponding to the discontinuous right-hand side \(f(r,\theta)=1_{\abs{\theta}<\pi/2}\) for  \(s=0.25\) and for \(s=0.75\).}
  \label{fig:solutionsPlateau}
\end{figure}
The obtained meshes (\Cref{fig:meshesPlateau}) are highly refined in regions close to the boundary and along the discontinuity \(\theta=\pm \pi/2\).
\begin{figure}
  \centering
  \includegraphics{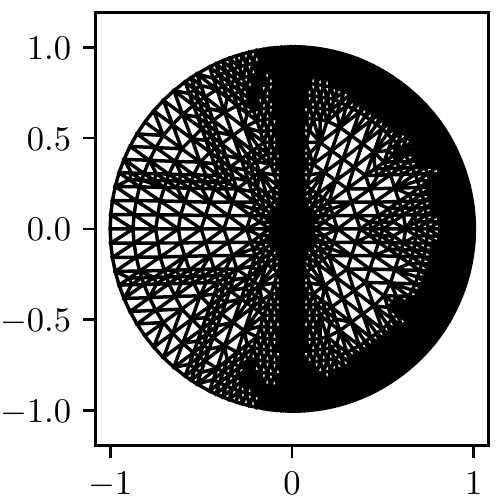}
  \includegraphics{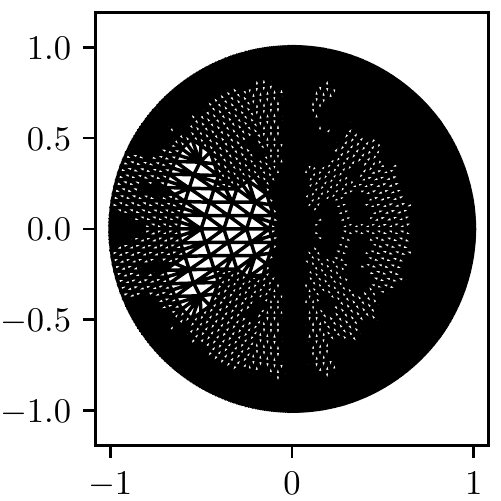}
  \caption{
    Adaptively refined meshes for the discontinuous right-hand side.
    \(s=0.25\) on the left, \(s=0.75\) on the right.}
  \label{fig:meshesPlateau}
\end{figure}
We plot the \(\widetilde{H}^{s}\)-error in \Cref{fig:errorAdaptive2dplateau}, and observe that the optimal rate of convergence of \(n^{-1/2}\) is achieved by adaptive refinement.
\begin{figure}
  \centering
  \includegraphics{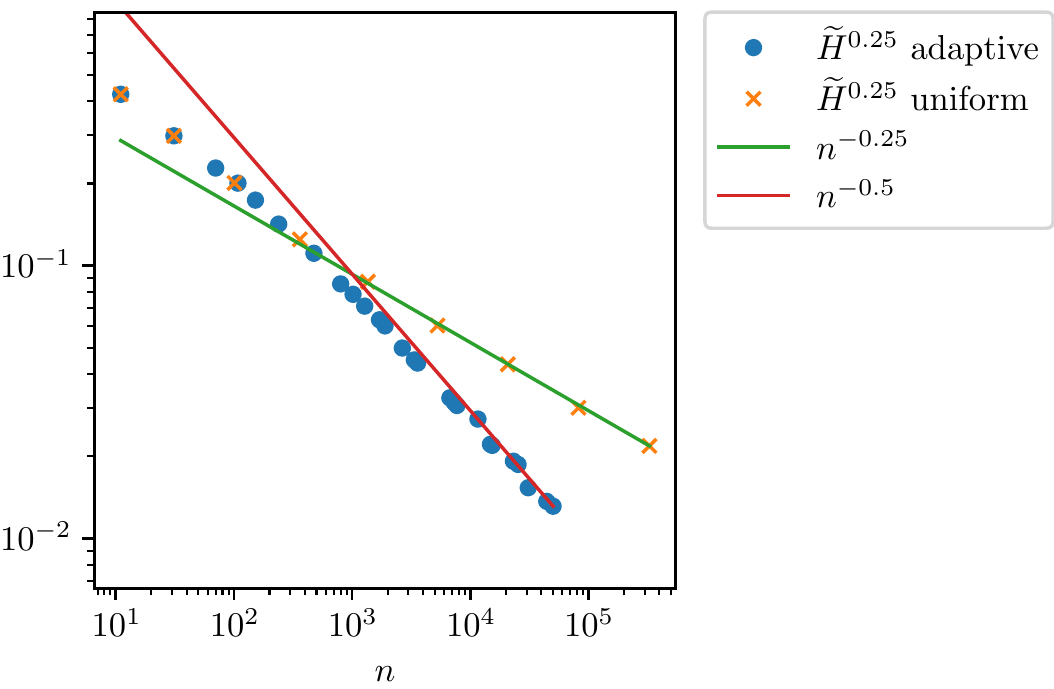}
  \includegraphics{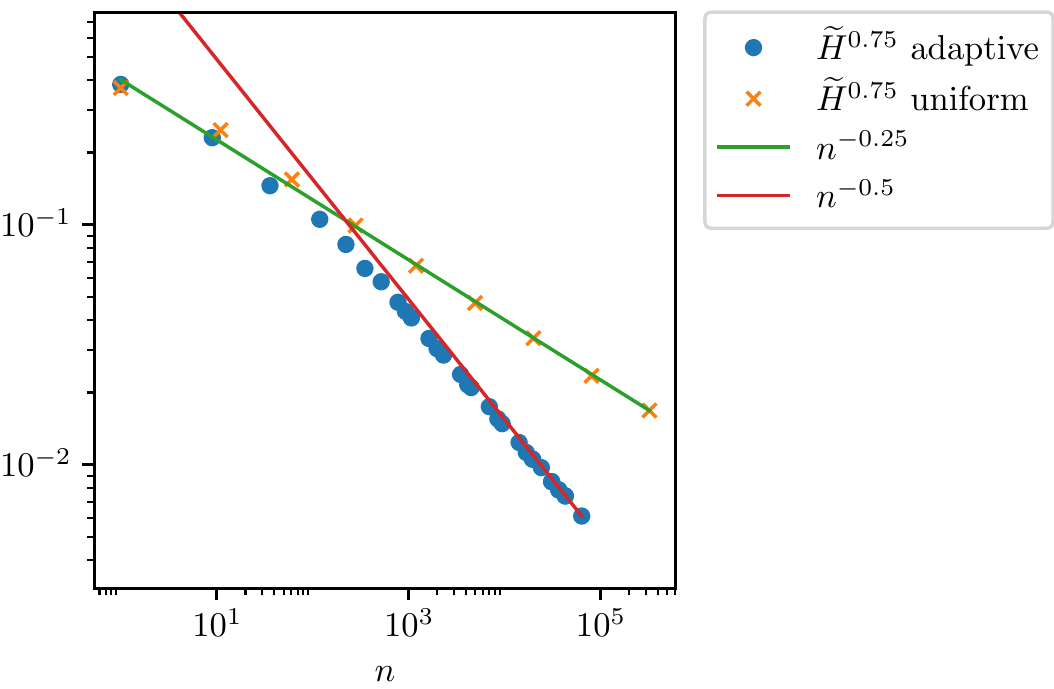}
  \caption{
    \(\widetilde{H}^{s}\)-error for the two-dimensional problem with discontinuous right-hand side \(f(r,\theta)=1_{\abs{\theta}<\pi/2}\) and for uniform and adaptive refinement.
    \(s=0.25\) at the top, \(s=0.75\) at the bottom.
    Uniform refinement results in convergence in \(\widetilde{H}^{s}\)-norm with rate \(n^{-1/4}\), adaptive refinement achieves the optimal rate of \(n^{-1/2}\).
  }
  \label{fig:errorAdaptive2dplateau}
\end{figure}

\subsection{L-Shaped Domain}
\label{sec:l-shaped-domain}

We propose to solve the fractional Poisson problem on the L-shaped domain \(\Omega=[0,2]^{2}\setminus [1,2]^{2}\) with constant right-hand side.
Since no analytic solution is available, we solve the problem on a highly refined mesh and use this reference solution \(u_{\underline{h}}\) to approximate the error.
In \Cref{fig:errorLshape}, we show \(\widetilde{H}^{s}\)- and \(L^{2}\) error, as well as the a posteriori error estimate \(\hat{\eta}=\left(\sum_{i}\hat{\eta}_{i}^{2}\right)^{1/2}\), which is based on the local a posteriori error estimators \(\hat{\eta}_{i}\) given in \eqref{eq:BRindicators}.
It can be observed that \(\norm{u_{h}-u_{\underline{h}}}_{\widetilde{H}^{s}}\) and the a posteriori error estimate \(\eta\) converge with optimal rate \(n^{-1/2}\), and that \(\norm{u_{h}-u_{\underline{h}}}_{L^{2}}\) converges with rate \(n^{-1/2-s/2}\).
The apparent speed-up of convergence in \(\widetilde{H}^{s}\)- and \(L^{2}\)-norm for larger number of unknowns is due to the fact that the reference solution \(u_{\underline{h}}\) is used in their computation instead of the true solution \(u\).
The a posteriori error estimate is obviously not suffer this deficiency, because it does not involve the reference solution \(u_{\underline{h}}\).
This example demonstrates that more complicated domains can easily be considered in the presented framework, and that convergence of optimal order can equally well be achieved.

\begin{figure}
  \centering
  \includegraphics{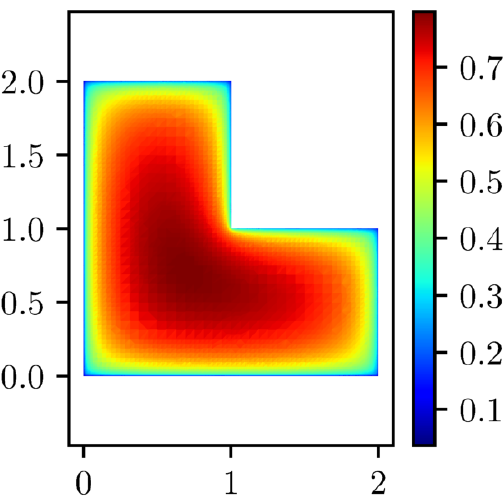}
  \includegraphics{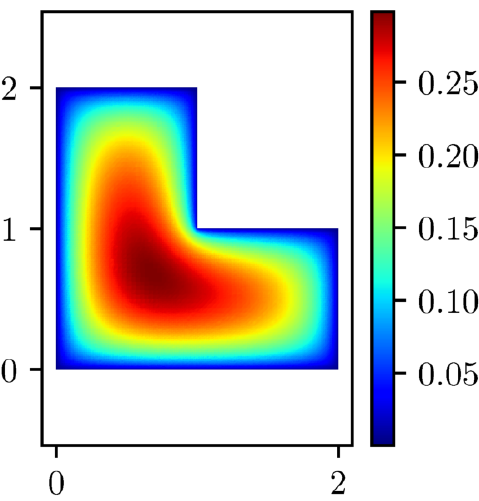}
  \caption{Solutions \(u\) on the L-shaped domain corresponding to a constant right-hand side for \(s=0.25\) (\emph{left}) and for \(s=0.75\) (\emph{right}).}
  \label{fig:solutionsLshape}
\end{figure}

\begin{figure}
  \centering
  \includegraphics{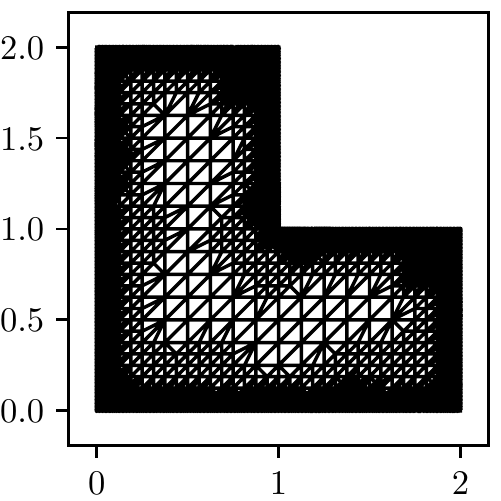}
  \includegraphics{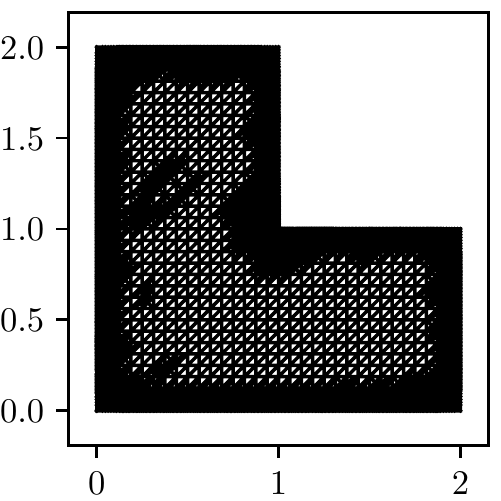}
  \caption{
    Adaptively refined meshes for the L-shaped domain.
    \(s=0.25\) on the left, \(s=0.75\) on the right.}
  \label{fig:meshesLshape}
\end{figure}

\begin{figure}
  \centering
  \includegraphics{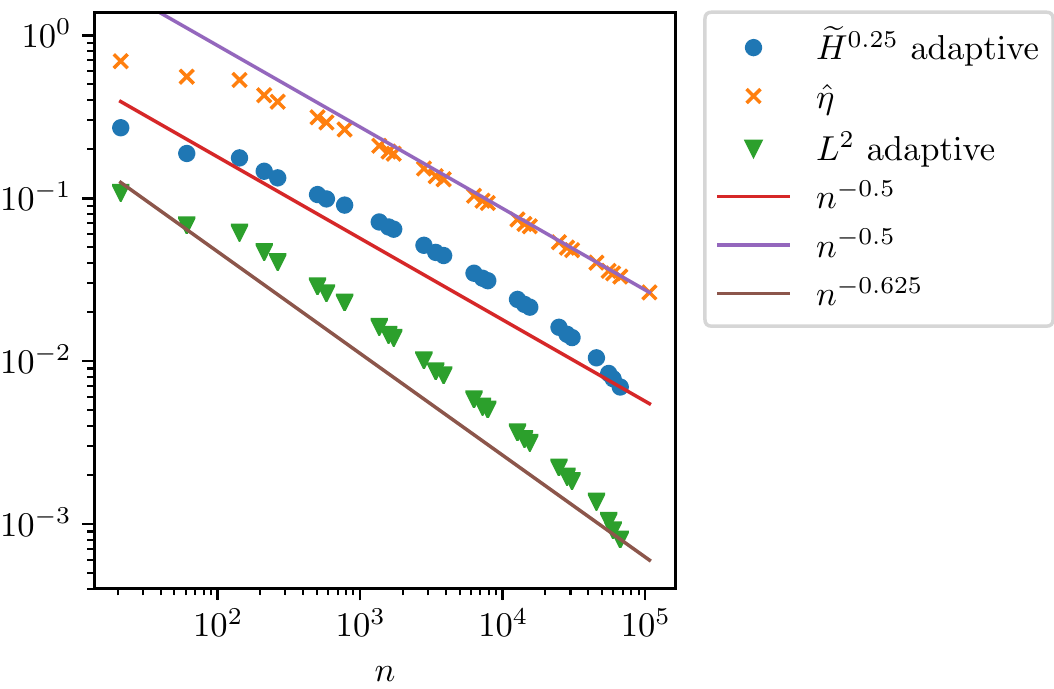}
  \includegraphics{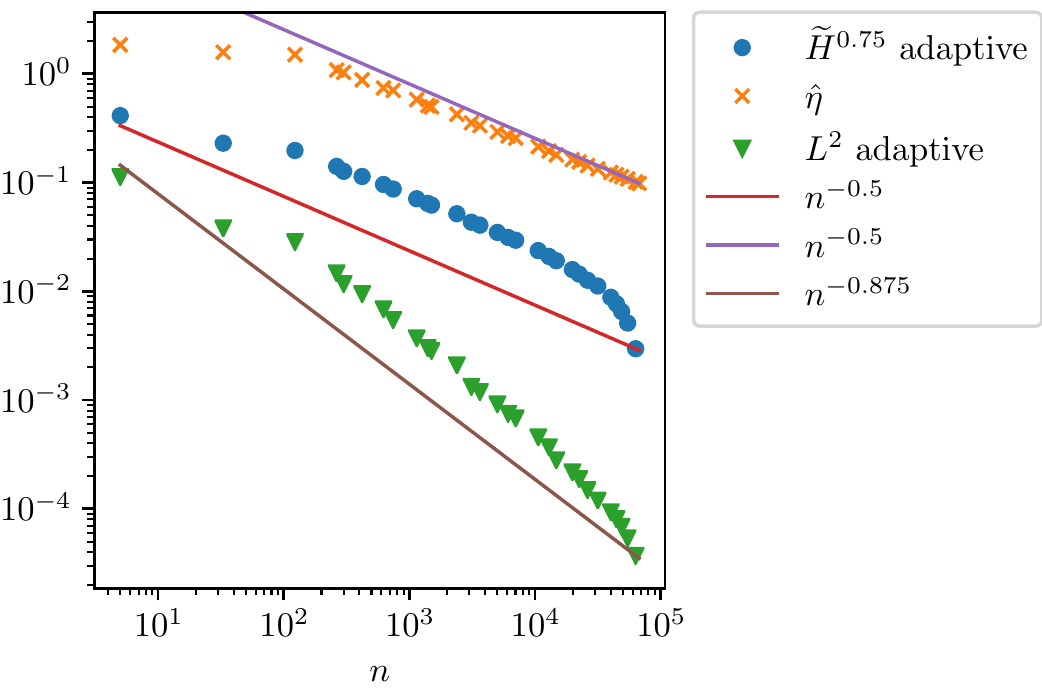}
  \caption{
    \(\widetilde{H}^{s}\)-, \(L^{2}\)-error and estimated error \(\eta\) for the L-shaped domain.
    \(s=0.25\) at the top, \(s=0.75\) at the bottom.
    Adaptive refinement results in convergence in \(\widetilde{H}^{s}\)-norm with optimal rate of \(n^{-1/2}\), and the same holds for the a posteriori error \(\hat{\eta}\).
    In \(L^{2}\)-norm, the rate \(n^{-1/2-s/2}\) is observed.
  }
  \label{fig:errorLshape}
\end{figure}

\subsection{A Domain with Disconnected Components}
\label{sec:doma-with-disc}

So far, all the considered domains \(\Omega\) were simply connected.
In fact, this is not a necessary requirement.
Take \(\Omega=\left\{(x,y)\in[0,1]^{2} \mid 0.05 \leq \abs{y-1/2}\right\}\) to be a square domain with a strip removed in the middle.
Then \(\Omega\) has two disconnected components.
We take the right-hand side \(f\) to be one in the upper part of the domain, and zero in the lower part.
Under these conditions, the solution of the regular, integer order Poisson problem is non-zero only on the upper part.
In the fractional case, however, due to the non-locality of the fractional Laplacian, the solution is positive in the interior of the
whole domain, as can be seen in \Cref{fig:solutionsDisconnected}.
The corresponding adaptively refined meshes are shown in \Cref{fig:meshesDisconnected}.

\begin{figure}
  \centering
  \includegraphics{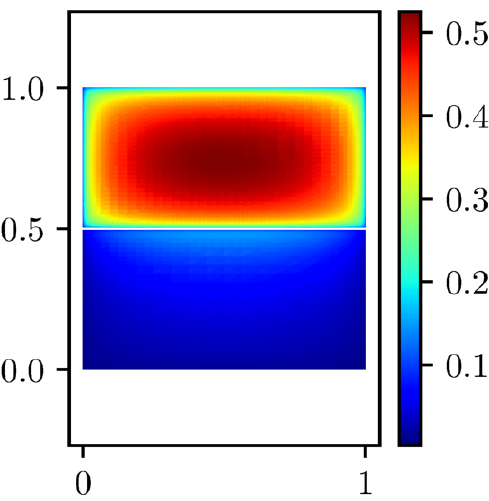}
  \includegraphics{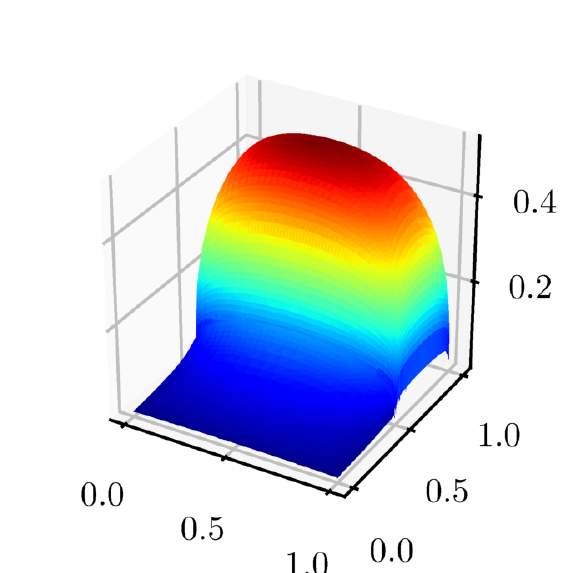}
  \includegraphics{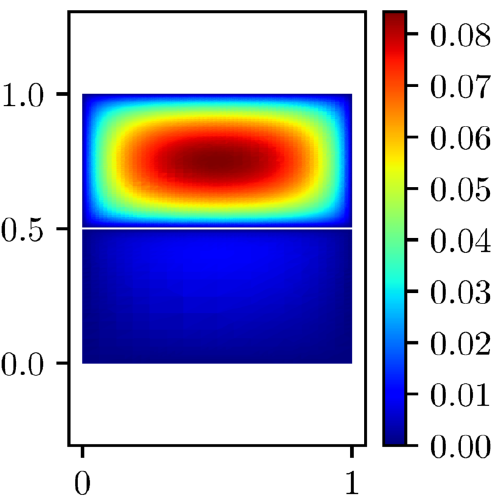}
  \includegraphics{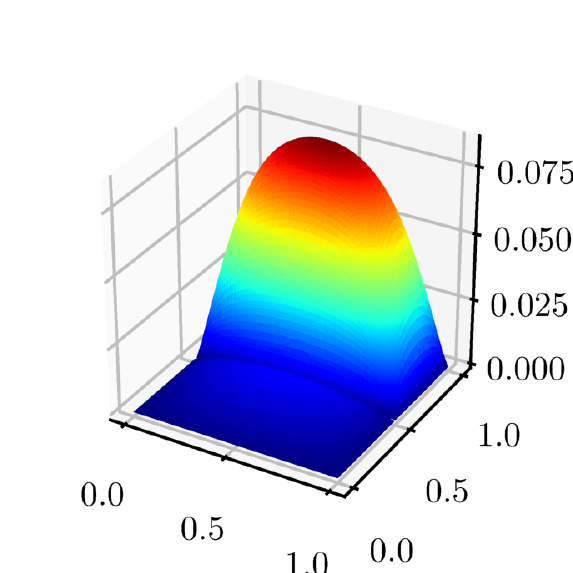}
  \caption{Solutions \(u\) on the two component domain corresponding to a constant right-hand side for \(s=0.25\) (\emph{top}) and for \(s=0.75\) (\emph{bottom}).}
  \label{fig:solutionsDisconnected}
\end{figure}

\begin{figure}
  \centering
  \includegraphics{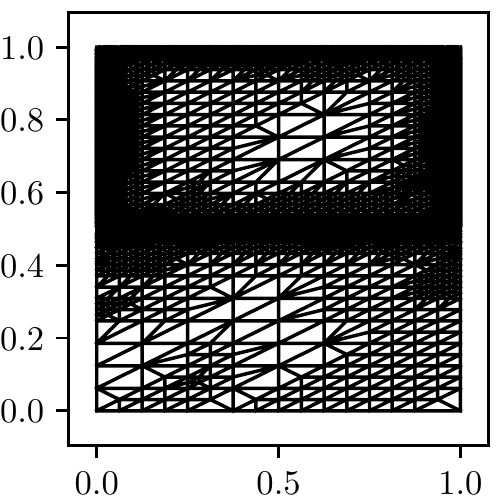}
  \includegraphics{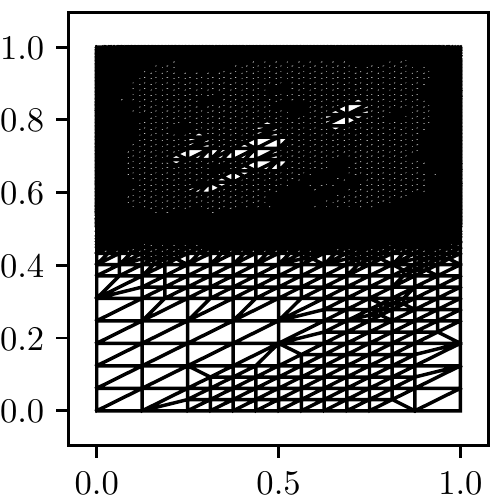}
  \caption{
    Adaptively refined meshes for the two component domain.
    \(s=0.25\) on the left, \(s=0.75\) on the right.}
  \label{fig:meshesDisconnected}
\end{figure}

\subsection{Exploiting Symmetry}
\label{sec:exploiting-symmetry}

The examples of the previous sections displayed symmetries and periodicity in the shape of the domain and the right-hand side and therefore in the solution.
In particular, the solution shown in \Cref{fig:solutionsPlateau} suggests that one should be able to exploit symmetry to halve the number of degrees of freedom required.

Let \(\mat{B}\) be an orthogonal transformation such that for some \(K\), \(\mat{B}^{K}=\operatorname{Id}\), \(\mat{B}\Omega=\Omega\), and let \(\omega\subset\tilde{\omega}\subset \mathbb{R}^{d}\) be representative domains such that \(\mathbb{R}^{d}=\bigcup_{i=0}^{K-1}\mat{B}^{i}\tilde{\omega}\) and \(\Omega=\bigcup_{i=0}^{K-1}\mat{B}^{i}\omega\). Moreover, assume that \(u\left(\mat{B}^{i}\vec{x}\right)=u\left(\vec{x}\right)\) and \(f\left(\mat{B}^{i}\vec{x}\right)=f\left(\vec{x}\right)\).
Examples of such transformations for \(\Omega=B(0,1)\) include the reflection \(\mat{B}=-\operatorname{Id}\) with \(\tilde{\omega}\) and \(\omega\) being the half-space and the half disc, or the rotation by angle \(\theta=\frac{2\pi}{K}\) and the domains \(\tilde{\omega}=\left\{(r\cos\phi, r\sin\phi)\in\mathbb{R}^{2}\mid r\geq 0, \phi\in[0,2\pi/K]\right\}\) and \(\omega=\tilde{\omega}\cap\left\{r\leq 1\right\}\).

The variational formulation of the fractional Poisson problem \eqref{eq:fracPoissonVariational} then reduces to
\begin{align*}
  a\left(u,v\right)
  &= \frac{C(d,s)}{2}\int_{\omega}\int_{\omega}\sum_{i,j=0}^{K}\frac{\left(u\left(\mat{B}^{i}\vec{x}\right)-u\left(\mat{B}^{j}\vec{y}\right)\right)\left(v\left(\mat{B}^{i}\vec{x}\right)-v\left(\mat{B}^{j}\vec{y}\right)\right)}{\abs{\mat{B}^{i}\vec{x}-\mat{B}^{j}\vec{y}}^{d+2s}} \\
  &\quad+ C(d,s)\int_{\omega}\int_{\tilde{\omega}\setminus \omega}\sum_{i,j=0}^{K}\frac{u\left(\mat{B}^{i}\vec{x}\right)v\left(\mat{B}^{i}\vec{x}\right)}{\abs{\mat{B}^{i}\vec{x}-\mat{B}^{j}\vec{y}}^{d+2s}}\\
  &= K\frac{C(d,s)}{2}\int_{\omega}\int_{\omega} \left(u\left(\vec{x}\right)-u\left(\vec{y}\right)\right)\left(v\left(\vec{x}\right)-v\left(\vec{y}\right)\right) k_{\text{eff}}\left(\vec{x},\vec{y}\right)  \\
  &\quad+ K C(d,s)\int_{\omega}\int_{\tilde{\omega}\setminus \omega}u\left(\vec{x}\right)v\left(\vec{x}\right)k_{\text{eff}}\left(\vec{x},\vec{y}\right)
    \intertext{and}
    \left\langle f, v\right\rangle
  &= \sum_{i=0}^{K-1} \int_{\omega}f\left(\mat{B}^{i}\vec{x}\right)v\left(\mat{B}^{i}\vec{x}\right) \\
  &= K \int_{\omega}fv
\end{align*}
with the effective kernel
\begin{align*}
  k_{\text{eff}}\left(\vec{x},\vec{y}\right)&=\sum_{i=0}^{K-1}\abs{\vec{x}-\mat{B}^{i}\vec{y}}^{-d-2s}.
\end{align*}
that accounts for the interaction with mirror images.
Effectively, the computational effort for the assembly and solution of systems involving the dense stiffness matrix is divided by a factor of \(K\), since only the sub-domain \(\omega\) needs to be discretized.
The complexity of the cluster method is expected to decrease from \(\mathcal{O}\left(n\left(\log n\right)^{2d}\right)\) to \(\mathcal{O}\left(n/K\left(\log n\right)^{2d}\right)\) since the assembly of the near-field contributions constitutes the bulk of the work.

To illustrate the approach, we solve the fractional Poisson problem with right-hand side \(f_{2,8}\) on the unit disc.
By leveraging the periodicity in the angular direction (and neglecting the mirror symmetry), we reduce to a problem on the circular sector \(\tilde{\omega}=\left\{(r\cos\phi, r\sin\phi)\in\mathbb{R}^{2}\mid r\geq 0, \phi\in[0,\pi/4]\right\}\) and \(\omega=\tilde{\omega}\cap\left\{r\leq 1\right\}\).
For simplicity, we employ a uniform mesh and assemble the dense system matrix.
The solution is shown in \Cref{fig:periodicSolution}.

\begin{figure}
  \centering
  \includegraphics{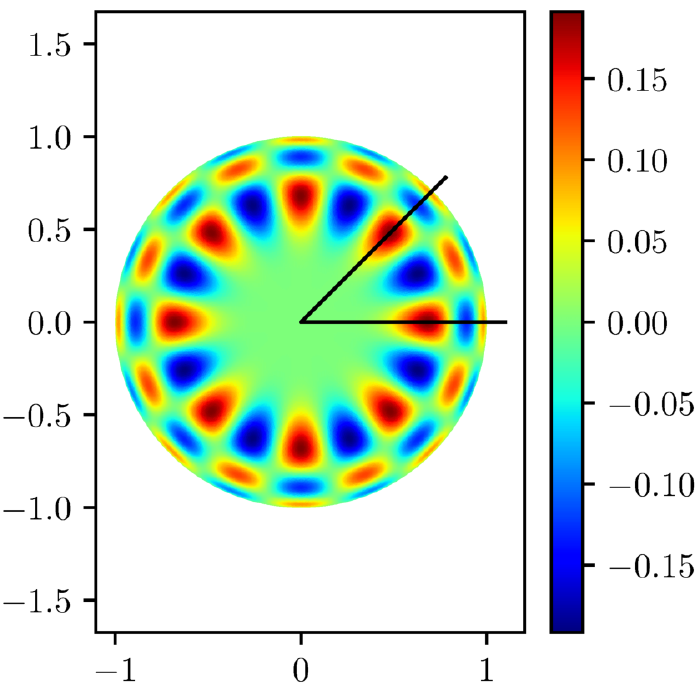}
  \caption{
    Periodic solution \(u=u_{2,8}^{2D}\).
    Only the circular sector on the right is discretized, with periodic images taken into account through the effective kernel \(k_{\text{eff}}\).
  }
  \label{fig:periodicSolution}
\end{figure}

\section{Conclusion}
\label{sec:conclusion}

In this work, we have extended the numerical approximation of the fractional Laplacian operator from the case of globally quasi-uniform meshes to locally refined meshes.
In order to overcome slow convergence due to the inherent singularity of the solution close to the boundary, we introduced a posteriori error indicators and adaptive mesh refinement.
The non-locality of the problem leads to dense matrices and solution complexity of \(\mathcal{O}\left(n^{3}\right)\) using direct solvers and at best \(\mathcal{O}\left(n^{2}\right)\) using an optimal iterative method.
We showed that by employing a cluster method, assembly, matrix-vector product and computation of the error estimators scale quasi-linear in the number of unknowns.
This means that the solution of the fractional Poisson problem \eqref{eq:fracPoisson} and the fractional Heat equation \eqref{eq:fractionalHeat} can be performed in the same complexity and with equal (or even better) rate of convergence as of the standard integer order equivalents.
Through several one- and two-dimensional examples, we illustrated that the predicted optimal rates of convergence are obtained.
We demonstrated how symmetries and periodicity in the problem can be leveraged to reduce the problem size even further.

\FloatBarrier

\appendix
\gdef\thesection{\Alph{section}} 
\makeatletter
\renewcommand\@seccntformat[1]{Appendix \csname the#1\endcsname.\hspace{0.5em}}
\makeatother

\section{Proof of \Cref{lem:strongForm}}
\label{sec:proof}

\strongForm*
\begin{proof}
  Let \(u\in V_{h}\) and \(\vec{x}\in K_{0}\), for some \(K_{0}\in \mathcal{P}_{h}\).
  Then
  \begin{align}
    \frac{\left(-\Delta\right)^{s}u\left(\vec{x}\right)}{C\left(d,s\right)}
    &= \operatorname{p.v.}\int_{\mathbb{R}^{d}} \frac{u\left(\vec{x}\right)-u\left(\vec{y}\right)}{\abs{\vec{x}-\vec{y}}^{d+2s}} \dd \vec{y} \nonumber\\
    &= \operatorname{p.v.}\int_{K_{0}} \frac{u\left(\vec{x}\right)-u\left(\vec{y}\right)}{\abs{\vec{x}-\vec{y}}^{d+2s}} \dd \vec{y}
      + u\left(\vec{x}\right) \int_{\mathbb{R}^{d}\setminus K_{0}} \frac{1}{\abs{\vec{x}-\vec{y}}^{d+2s}} \dd \vec{y}\nonumber\\
      &\quad-\sum_{K\neq K_{0}} \int_{K} \frac{u\left(\vec{y}\right)}{\abs{\vec{x}-\vec{y}}^{d+2s}}\dd \vec{y}.\label{eq:1}
  \end{align}

  We have the following helpful identities:
  \begin{align}
    \frac{\vec{x}-\vec{y}}{\abs{\vec{x}-\vec{y}}^{d+2s}}
    &=
      \begin{cases}
        \frac{1}{d+2s-2} \grad_{\vec{y}} \frac{1}{\abs{\vec{x}-\vec{y}}^{d+2s-2}}, & s\neq 1-d/2, \\
        \grad_{\vec{y}}\log \frac{1}{\abs{\vec{x}-\vec{y}}}, & s=1-d/2,
      \end{cases} \label{eq:2}\\
    \frac{1}{\abs{\vec{x}-\vec{y}}^{d+2s}}
    &= \frac{1}{2s} \grad_{\vec{y}}\cdot \frac{\vec{x}-\vec{y}}{\abs{\vec{x}-\vec{y}}^{d+2s}},  \label{eq:3}\\
    \frac{1}{\abs{\vec{x}-\vec{y}}^{d+2s}}
    &=
      \begin{cases}
        \frac{1}{2s(d+2s-2)}\Delta_{\vec{y}}\frac{1}{\abs{\vec{x}-\vec{y}}^{d+2s-2}}, & s\neq 1-d/2, \\
        \frac{1}{d-2} \Delta_{\vec{y}} \log \frac{1}{\abs{\vec{x}-\vec{y}}}, & s=1-d/2.
      \end{cases} \label{eq:4}
  \end{align}
  For \(\vec{x},\vec{y}\in K_{0}\),
  \begin{align*}
    u\left(\vec{x}\right)-u\left(\vec{y}\right)&= \restr{\grad u}{K_{0}}\cdot\left(\vec{x}-\vec{y}\right)
  \end{align*}
  since \(\restr{u}{K_{0}}\) is linear.
  Hence, the first term of \eqref{eq:1} can be rewritten using \eqref{eq:2} as
  \begin{align*}
    \operatorname{p.v.}\int_{K_{0}} \frac{u\left(\vec{x}\right)-u\left(\vec{y}\right)}{\abs{\vec{x}-\vec{y}}^{d+2s}} \dd \vec{y}
    &= \lim_{\varepsilon\rightarrow0} \restr{\grad u}{K_{0}} \cdot \int_{K_{0}\setminus B(\vec{x},\varepsilon)}  \frac{\vec{x}-\vec{y}}{\abs{\vec{x}-\vec{y}}^{d+2s}} \dd \vec{y}\\
    &= \lim_{\varepsilon\rightarrow0} \left[\restr{\grad u}{K_{0}} \cdot \frac{1}{d+2s-2}\int_{\partial K_{0}} \frac{\vec{n}_{\vec{y}}}{\abs{\vec{x}-\vec{y}}^{d+2s-2}} \dd \vec{y} \right. \\
    &\qquad+ \left.\restr{\grad u}{K_{0}} \cdot \frac{1}{d+2s-2} \underbrace{\int_{\partial B(\vec{x},\varepsilon)} \frac{\vec{n}_{\vec{y}}}{\abs{\vec{x}-\vec{y}}^{d+2s-2}} \dd \vec{y}}_{=0}
      \right] \\
    &= \frac{1}{d+2s-2}\int_{\partial K_{0}} \frac{\restr{\grad u}{K_{0}}\cdot\vec{n}_{\vec{y}}}{\abs{\vec{x}-\vec{y}}^{d+2s-2}} \dd \vec{y}.
  \end{align*}
  Here, \(\vec{n}_{\vec{y}}\) is the outward normal at \(\vec{y}\).
  When \(s=1-d/2\), we obtain instead that
  \begin{align*}
    \operatorname{p.v.}\int_{K_{0}} \frac{u\left(\vec{x}\right)-u\left(\vec{y}\right)}{\abs{\vec{x}-\vec{y}}^{2}} \dd \vec{y}
    &=\int_{\partial K_{0}} \restr{\grad u}{K_{0}}\cdot\vec{n}_{\vec{y}} \log \frac{1}{\abs{\vec{x-\vec{y}}}} \dd \vec{y}.
  \end{align*}

  The second term of \eqref{eq:1} can be rewritten using \eqref{eq:3} as
  \begin{align*}
    \int_{\mathbb{R}^{d}\setminus K_{0}} \frac{1}{\abs{\vec{x}-\vec{y}}^{d+2s}} \dd \vec{y}
    &= \frac{1}{2s} \int_{\mathbb{R}^{d}\setminus K_{0}} \grad_{\vec{y}}\frac{\vec{x}-\vec{y}}{\abs{\vec{x}-\vec{y}}^{d+2s}} \dd \vec{y} \\
    &= -\frac{1}{2s} \int_{\partial K_{0}} \frac{\vec{n}_{\vec{y}}\cdot\left(\vec{x}-\vec{y}\right)}{\abs{\vec{x}-\vec{y}}^{d+2s}} \dd \vec{y}.
  \end{align*}
  Finally, for every \(K\in\mathcal{P}_{h}\), \(K\neq K_{0}\), it follows from \eqref{eq:4}, Green's second identity and \eqref{eq:2} that
  \begin{align*}
    \int_{K}  \frac{u\left(\vec{y}\right)}{\abs{\vec{x}-\vec{y}}^{d+2s}} \dd \vec{y}
    &= \frac{1}{2s(d+2s-2)}\int_{K}  u\left(\vec{y}\right) \Delta_{\vec{y}}\frac{1}{\abs{\vec{x}-\vec{y}}^{d+2s-2}} \dd \vec{y}\\
    &= \frac{1}{2s(d+2s-2)}\int_{K} \underbrace{\left(\Delta_{\vec{y}} u\left(\vec{y}\right)\right)}_{=0} \frac{1}{\abs{\vec{x}-\vec{y}}^{d+2s-2}} \dd \vec{y}\\
    &\quad + \frac{1}{2s(d+2s-2)}\int_{\partial K}  \left\{u\left(\vec{y}\right) (d+2s-2) \frac{\vec{n}_{\vec{y}}\cdot\left(\vec{x}-\vec{y}\right)}{\abs{\vec{x}-\vec{y}}^{d+2s}} - \restr{\grad u}{K} \cdot \vec{n}_{\vec{y}} \frac{1}{\abs{\vec{x}-\vec{y}}^{d+2s-2}}\right\}\dd \vec{y} \\
    &= \frac{1}{2s}\int_{\partial K}  u\left(\vec{y}\right) \frac{\vec{n}_{\vec{y}}\cdot\left(\vec{x}-\vec{y}\right)}{\abs{\vec{x}-\vec{y}}^{d+2s}}\dd \vec{y}
      - \frac{1}{2s(d+2s-2)} \int_{\partial K}\restr{\grad u}{K} \cdot \vec{n}_{\vec{y}} \frac{1}{\abs{\vec{x}-\vec{y}}^{d+2s-2}}\dd \vec{y}.
  \end{align*}
  When \(s=1-d/2\), we obtain instead that
  \begin{align*}
    \int_{K}  \frac{u\left(\vec{y}\right)}{\abs{\vec{x}-\vec{y}}^{d+2s}} \dd \vec{y}
    &= \frac{1}{2s}\int_{\partial K}  u\left(\vec{y}\right) \frac{\vec{n}_{\vec{y}}\cdot\left(\vec{x}-\vec{y}\right)}{\abs{\vec{x}-\vec{y}}^{2}}\dd \vec{y}
      - \frac{1}{2s} \int_{\partial K}\restr{\grad u}{K} \cdot \vec{n}_{\vec{y}} \log\frac{1}{\abs{\vec{x}-\vec{y}}}\dd \vec{y}.
  \end{align*}
  In conclusion, we find that
  \begin{align*}
    \frac{\left(-\Delta\right)^{s}u\left(\vec{x}\right)}{C\left(d,s\right)}
    &= \frac{1}{d+2s-2}\int_{\partial K_{0}} \frac{\restr{\grad u}{K_{0}}\cdot\vec{n}_{\vec{y}}}{\abs{\vec{x}-\vec{y}}^{d+2s-2}} \dd \vec{y}
      -\frac{u\left(\vec{x}\right)}{2s} \int_{\partial K_{0}} \frac{\vec{n}_{\vec{y}}\cdot\left(\vec{x}-\vec{y}\right)}{\abs{\vec{x}-\vec{y}}^{d+2s}} \dd \vec{y} \\
    &\quad + \sum_{K\neq K_{0}} \left\{\frac{1}{2s(d+2s-2)} \int_{\partial K}\restr{\grad u}{K} \cdot \vec{n}_{\vec{y}} \frac{1}{\abs{\vec{x}-\vec{y}}^{d+2s-2}}\dd \vec{y}
      - \frac{1}{2s}\int_{\partial K} u\left(\vec{y}\right) \frac{\vec{n}_{\vec{y}}\cdot\left(\vec{x}-\vec{y}\right)}{\abs{\vec{x}-\vec{y}}^{d+2s}}\dd \vec{y}\right\}
  \end{align*}
  and when \(s=1-d/2\) then
  \begin{align*}
    \frac{\left(-\Delta\right)^{s}u\left(\vec{x}\right)}{C\left(d,s\right)}
    &= \int_{\partial K_{0}} \restr{\grad u}{K_{0}}\cdot\vec{n}_{\vec{y}} \log \frac{1}{\abs{\vec{x-\vec{y}}}} \dd \vec{y}
      -\frac{u\left(\vec{x}\right)}{2s} \int_{\partial K_{0}} \frac{\vec{n}_{\vec{y}}\cdot\left(\vec{x}-\vec{y}\right)}{\abs{\vec{x}-\vec{y}}^{d+2s}} \dd \vec{y} \\
    &\quad + \sum_{K\neq K_{0}} \left\{\frac{1}{2s} \int_{\partial K}\restr{\grad u}{K} \cdot \vec{n}_{\vec{y}} \log\frac{1}{\abs{\vec{x}-\vec{y}}}\dd \vec{y}
      -\frac{1}{2s}\int_{\partial K}  u\left(\vec{y}\right) \frac{\vec{n}_{\vec{y}}\cdot\left(\vec{x}-\vec{y}\right)}{\abs{\vec{x}-\vec{y}}^{2}}\dd \vec{y}  \right\}.
  \end{align*}
\end{proof}

\section{Determination of Quadrature Orders}
\label{sec:determ-quadr-orders}

\begin{theorem}[\cite{SauterSchwab2010_BoundaryElementMethods}, Theorems 5.3.23 and 5.3.24]\label{thm:localError}
  If \(K\) and \(\tilde{K}\) (\(K\) and \(e\)) are touching elements, then
  \begin{align*}
    \abs{E_{K\times\tilde{K}}^{i,j}}&\leq Ch_{K}^{2-2s}\rho_{1}^{-2k_{T}},\\
    \abs{E_{K\times e}^{i,j}} &\leq Ch_{K}^{2-2s}\rho_{3}^{-2k_{T,\partial}},
  \end{align*}
  where \(\rho_{1},\rho_{3}>1\) and \(k_{T}\), \(k_{T,\partial}\) are the quadrature orders in every dimension of \cref{eq:elementPair,eq:elementPairBoundary} (after regularisation).

  If \(K\) and \(\tilde{K}\) (\(K\) and \(e\)) are not touching, then
  \begin{flalign*}
    \abs{E_{K\times\tilde{K}}^{i,j}} &\leq C \left(h_{K}h_{\tilde{K}}\right)^{2} d_{K,\tilde{K}}^{-2-2s}\left\{\left(\frac{d_{K,\tilde{K}}}{h_{K}}\right)^{2}\tilde{\rho}_{2}\left(K,\tilde{K}\right)^{-2k_{NT}} + \left(\frac{d_{K,\tilde{K}}}{h_{\tilde{K}}}\right)^{2}\tilde{\rho}_{2}\left(\tilde{K},K\right)^{-2k_{NT}}\right\},\\
    \abs{E_{K\times e}^{i,j}} &\leq C \left(h_{K}h_{e}\right)^{2} d_{K,e}^{-2-2s}\left\{\left(\frac{d_{K,e}}{h_{K}}\right)^{2}\tilde{\rho}_{4}\left(K,e\right)^{-2k_{NT}} + \left(\frac{d_{K,e}}{h_{e}}\right)^{2}\tilde{\rho}_{4}\left(e,K\right)^{-2k_{NT}}\right\},
  \end{flalign*}
  where \(d_{K,\tilde{K}}:=dist(K,\tilde{K})\), \(d_{K,e}:=dist(K,e)\), \(\tilde{\rho}_{2}(K,\tilde{K}):=\rho_{2}\max\left\{\frac{d_{K,\tilde{K}}}{h_{K}},1\right\}\), \(\tilde{\rho}_{4}(K,e):=\rho_{4}\max\left\{\frac{d_{K,e}}{h_{K}},1\right\}\) and \(\rho_{2},\rho_{4}>1\), and \(k_{NT}\), \(k_{NT,\partial}\) are the quadrature order in every dimension of \cref{eq:elementPair,eq:elementPairBoundary}.
\end{theorem}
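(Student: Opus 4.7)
The plan is to establish the two bounds by separating the non-touching and touching cases, following the template for singular boundary-integral kernels. The estimates concern the four-dimensional integrals in \cref{eq:elementPair,eq:elementPairBoundary}, so the geometric configuration of the pair determines whether or not the integrand is analytic.

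In the non-touching case, the integrand in \cref{eq:elementPair} has no singularity on \(K\times\tilde{K}\) because \(\abs{\vec{x}-\vec{y}}\geq d_{K,\tilde{K}}>0\) there. I would pull both elements back to reference simplices via affine maps and examine the transformed integrand as a function of the reference coordinates \(\hat{\vec{x}},\hat{\vec{y}}\). For fixed \(\hat{\vec{y}}\), the map \(\hat{\vec{x}}\mapsto \abs{\vec{x}(\hat{\vec{x}})-\vec{y}(\hat{\vec{y}})}^{-d-2s}\) admits analytic continuation into a Bernstein polyellipse whose parameter is controlled by the ratio \(d_{K,\tilde{K}}/h_K\), and symmetrically in the other variable with \(h_{\tilde{K}}\) replacing \(h_K\). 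Applying the classical Gauss quadrature error estimate for analytic functions in each reference coordinate and combining via the tensor-product splitting \(I-Q_1\otimes Q_2=(I-Q_1)\otimes I + Q_1\otimes(I-Q_2)\) produces one contraction factor \(\tilde{\rho}_2(K,\tilde{K})^{-2k_{NT}}\) from one variable and \(\tilde{\rho}_2(\tilde{K},K)^{-2k_{NT}}\) from the other. Multiplying by the Jacobian \((h_K h_{\tilde{K}})^d\), the magnitude \(d_{K,\tilde{K}}^{-d-2s}\) of the kernel on the integration set, and the two factors of \(\abs{\vec{x}-\vec{y}}^2\) inherited from the basis-function differences yields the claimed prefactor \((h_K h_{\tilde{K}})^2 d_{K,\tilde{K}}^{-2-2s}\) after absorbing dimensional constants and the normalising \((d_{K,\tilde{K}}/h)^2\) factors shown in the statement.

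In the touching case, the singularity prevents direct application of Gauss quadrature, and the standard remedy is a Sauter--Schwab relative-coordinate regularisation. I would decompose \(K\times\tilde{K}\) into subregions according to the configuration (identical element, shared face, shared edge, shared vertex), and in each subregion apply a composite of Duffy-type maps that transports the piece to the unit cube while the Jacobian cancels the kernel singularity. The transformed integrand then extends analytically into a fixed Bernstein polyellipse in the reference coordinates, with analyticity radius independent of \(h_K\). Exponential convergence of tensor-product Gauss--Legendre quadrature for such analytic integrands produces the factor \(\rho_1^{-2k_T}\) with a uniform \(\rho_1>1\). Tracking the \(h_K\)-dependence through the combined affine and Duffy maps yields the overall scale \(h_K^{2-2s}\), matching the intrinsic size of the entry \(a^{K\times\tilde{K}}(\phi_i,\phi_j)\) itself, so the stated bound is in fact an \emph{exponentially small relative} error.

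The boundary pair bounds for \cref{eq:elementPairBoundary} are obtained by the same two-case analysis. The absence of basis-function differences in \cref{eq:elementPairBoundary} and the replacement of \(\dd\vec{y}\) by a surface measure change only the dimensional count; an analogous Duffy regularisation in the touching case and Bernstein-ellipse argument in the non-touching case produce the stated rates \(\rho_3^{-2k_{T,\partial}}\) and \(\tilde{\rho}_4^{-2k_{NT}}\). The main obstacle will be the bookkeeping in the touching case: enumerating the subconfigurations, constructing the Duffy map on each one explicitly, verifying that a single \(\rho_1>1\) works uniformly across them, and carefully tracking the \(h_K\)-scaling of the composite affine-and-Duffy maps so that the final prefactor is \(h_K^{2-2s}\) and not something weaker. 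The asymmetric tensor-product split in the non-touching case is comparatively routine but must be carried out carefully to obtain two contraction factors rather than a single less-informative bound.
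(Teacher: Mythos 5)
This theorem is not proved in the paper at all: the authors cite it verbatim from Sauter and Schwab's monograph (Theorems 5.3.23 and 5.3.24 of \cite{SauterSchwab2010_BoundaryElementMethods}), and the only proof appearing in the appendix is that of the downstream result \Cref{thm:quadError}, which takes \Cref{thm:localError} as a black box. So there is no in-paper proof for your proposal to diverge from.

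That said, your sketch is a faithful reconstruction of the argument found in the cited reference. The non-touching case is handled exactly as you describe: pull back to reference simplices, observe that the pulled-back kernel extends analytically into a Bernstein polyellipse whose parameter scales like \(1+d_{K,\tilde K}/h_K\) in one factor and \(1+d_{K,\tilde K}/h_{\tilde K}\) in the other, apply the classical Gauss error bound for analytic integrands, and use the telescoping identity \(I\otimes I - Q_1\otimes Q_2 = (I-Q_1)\otimes I + Q_1\otimes(I-Q_2)\) to get the two contraction factors \(\tilde\rho_2(K,\tilde K)^{-2k_{NT}}\) and \(\tilde\rho_2(\tilde K,K)^{-2k_{NT}}\). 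The touching case is the Sauter--Schwab relative-coordinate regularisation by configuration (identical, common face, common edge, common vertex), with Duffy-type changes of variable whose Jacobian absorbs the \(\abs{\vec x-\vec y}^{-d-2s}\) singularity, giving a uniform analyticity radius \(\rho_1>1\) and the overall scale \(h_K^{2-2s}\). Two caveats worth flagging if you were to carry this through in full. First, the bilinear form \eqref{eq:elementPair} involves differences \(\phi_i(\vec x)-\phi_i(\vec y)\) rather than the pure products \(\phi_i(\vec x)\phi_j(\vec y)\) that appear in classical BEM single-layer kernels; the regularisation machinery transfers, but the bookkeeping of how many powers of \(\abs{\vec x-\vec y}\) come out of the numerator versus the kernel (and hence the final \(h_K^{2-2s}\) versus the prefactor \((h_Kh_{\tilde K})^2 d_{K,\tilde K}^{-2-2s}\)) differs between near and far field, and your sketch glosses over the fact that in the far field the basis-function differences are \(\mathcal O(1)\) rather than \(\mathcal O(\abs{\vec x-\vec y}/h)\). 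Second, the boundary terms \eqref{eq:elementPairBoundary} live on \(K\times e\) with \(e\) a \((d-1)\)-dimensional face, so the Duffy decompositions in the touching sub-case are not literally the same four configurations as for \(K\times\tilde K\); the enumeration needs to be redone for the mixed-dimension pair, which is what yields the separate constant \(\rho_3\). Neither point changes the conclusion, but they are the places where a careless transcription of Sauter--Schwab would leave gaps.
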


\begin{theorem}\label{thm:quadError}
  For \(d=2\), let \(\mathcal{I}_{K}\) index the degrees of freedom on \(K\in\mathcal{P}_{h}\), and define \(\mathcal{I}_{K\times\tilde{K}}:=\mathcal{I}_{K}\cup\mathcal{I}_{\tilde{K}}\).
  Let \(k_{T}\left(K,\tilde{K}\right)\) (respectively \(k_{T,\partial}\left(K,e\right)\)) be the quadrature order used for touching pairs \(K\times\tilde{K}\) (respectively \(K\times e\)), and let \(k_{NT}\left(K,\tilde{K}\right)\) (respectively \(k_{NT,\partial}\left(K,e\right)\)) be the quadrature order used for pairs that have empty intersection.
  Denote the resulting approximation to the bilinear form \(a\left(\cdot,\cdot\right)\) by \(a^{Q}\left(\cdot,\cdot\right)\).
  Then the  consistency error due to quadrature is bounded by
  \begin{align*}
    \abs{a(u,v)-a_{Q}(u,v)}&\leq C \left(E_{T} + E_{NT} + E_{T,\partial} + E_{NT,\partial}\right) \norm{u}_{L^{2}\left(\Omega\right)} \norm{v}_{L^{2}\left(\Omega\right)} \quad \forall u,v \in V_{h},
  \end{align*}
  where the errors are given by
  \begin{align*}
    E_{T}&= n \max_{K,\tilde{K}\in\mathcal{P}_{h}, \overline{K}\cap\overline{\tilde{K}}\not=\emptyset}h_{K}^{-2s}\rho_{1}^{-2k_{T}\left(K,\tilde{K}\right)},\\
    E_{NT} &= n \max_{K,\tilde{K}\in\mathcal{P}_{h}, \overline{K}\cap\overline{\tilde{K}}=\emptyset} \left[h_{\tilde{K}}^{2}\tilde{\rho}_{2}\left(K,\tilde{K}\right)^{-2k_{NT}\left(K,\tilde{K}\right)} + h_{K}^{2}\tilde{\rho}_{2}\left(\tilde{K},K\right)^{-2k_{NT}\left(K,\tilde{K}\right)}\right] d_{K,\tilde{K}}^{-2s} \max\left\{h_{K}^{-2},h_{\tilde{K}}^{-2}\right\}, \\
    E_{T,\partial} &= n^{1/2}\max_{K\in\mathcal{P}_{h},e\in\partial\mathcal{P}_{h},\overline{K}\cap\overline{e}\not=\emptyset}h_{K}^{-2s}\rho_{3}^{-2k_{T,\partial}\left(K,e\right)},\\
    E_{NT,\partial} &= n^{1/2}\max_{K\in\mathcal{P}_{h}, e\in\partial\mathcal{P}_{h}, \overline{K}\cap\overline{e}=\emptyset} \left[h_{e}^{2}\tilde{\rho}_{4}\left(K,e\right)^{-2k_{NT}\left(K,e\right)} + h_{K}^{2}\tilde{\rho}_{4}\left(e,K\right)^{-2k_{NT}\left(K,e\right)}\right] d_{K,e}^{-2s}h_{K}^{-2},
  \end{align*}
  and where
  \(d_{K,\tilde{K}}:=\inf_{\vec{x}\in K, \vec{y}\in\tilde{K}}\abs{\vec{x}-\vec{y}}\), \(d_{K,e}:=\inf_{\vec{x}\in K, \vec{y}\in e}\abs{\vec{x}-\vec{y}}\), and \(\rho_{1},\tilde{\rho}_{2},\rho_{3},\tilde{\rho}_{4}\) are as in \Cref{thm:localError}.
\end{theorem}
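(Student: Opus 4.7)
The plan is to decompose the quadrature error into local contributions, apply the local estimates from \Cref{thm:localError} pair by pair, control the coefficient products via an inverse estimate, and sum while keeping careful track of the number of pairs. Writing \(u=\sum_i u_i \phi_i\) and \(v=\sum_j v_j \phi_j\) and using the splitting of \(a(\cdot,\cdot)\) given at the start of \Cref{sec:solut-syst-involv},
\begin{align*}
  a(u,v)-a^Q(u,v) = \sum_{K,\tilde K}\sum_{i,j\in\mathcal{I}_{K\times\tilde K}} u_i v_j E^{i,j}_{K\times\tilde K} + \sum_{K,e}\sum_{i,j\in\mathcal{I}_K} u_i v_j E^{i,j}_{K\times e},
\end{align*}
and I split each of these into the four cases appearing in the statement: touching and non-touching element-element pairs, and touching and non-touching element-edge pairs.

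Next, I bound the coefficient products. Because \(u\) is piecewise linear and \(d=2\), the standard inverse inequality gives \(\abs{u_i}=\abs{u(\vec{z}_i)}\leq \norm{u}_{L^{\infty}(K)}\leq C h_K^{-1}\norm{u}_{L^{2}(K)}\leq C h_K^{-1}\norm{u}_{L^{2}(\Omega)}\) for every element \(K\) with \(\vec{z}_i\in K\). Applying the analogous estimate to \(v_j\) and distinguishing whether \(i,j\) both sit on \(K\), both on \(\tilde K\), or one on each, yields \(\abs{u_i v_j}\leq C\max\{h_K^{-2},h_{\tilde K}^{-2}\}\norm{u}_{L^{2}(\Omega)}\norm{v}_{L^{2}(\Omega)}\). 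Since shape regularity bounds \(\abs{\mathcal{I}_{K\times\tilde K}}\) uniformly, the inner double sum on a single pair is controlled by this quantity. Multiplying by the local bound of \Cref{thm:localError}, the per-pair contribution reduces, after cancellation, to exactly the quantity that appears inside the maximum in the corresponding \(E_\bullet\); for example, a single touching element-element pair contributes \(C h_K^{-2s}\rho_1^{-2k_T(K,\tilde K)}\norm{u}_{L^{2}(\Omega)}\norm{v}_{L^{2}(\Omega)}\) because the factor \(h_K^{2-2s}\) from \Cref{thm:localError} combines with the \(h_K^{-2}\) from the coefficient bound.

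Finally, I sum the per-pair bounds by pulling out the maximum and counting. A shape regular triangulation in two dimensions has \(\mathcal{O}(n)\) element-element pairs with non-empty closure intersection (each element has a bounded number of touching neighbours) and \(\mathcal{O}(n^{1/2})\) boundary edges in \(\partial\mathcal{P}_h\). Consequently the touching element-element sum is dominated by the maximum per-pair quantity times \(n\), yielding \(E_T\norm{u}_{L^{2}}\norm{v}_{L^{2}}\), and the touching element-edge sum analogously gives \(E_{T,\partial}\norm{u}_{L^{2}}\norm{v}_{L^{2}}\). For non-touching pairs the geometric decay factors \(\tilde\rho_2^{-2k_{NT}(K,\tilde K)}\) and \(\tilde\rho_4^{-2k_{NT,\partial}(K,e)}\) concentrate the sum, so that for fixed \(K\) (or fixed boundary edge) the inner sum over the second index is comparable to its maximum; the outer sum over \(K\) or over \(e\in\partial\mathcal{P}_h\) then contributes the counting factors \(n\) and \(n^{1/2}\) appearing in \(E_{NT}\) and \(E_{NT,\partial}\).

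The main obstacle is the accounting for non-touching pairs, where a naive enumeration would produce \(\mathcal{O}(n^2)\) element-element and \(\mathcal{O}(n^{3/2})\) element-edge terms and thus an overly pessimistic bound. The resolution is that the per-pair quantity already carries the geometric decay through \(\tilde\rho_2\) or \(\tilde\rho_4\): for fixed first index the sum over the second index is, up to constants depending on the cluster admissibility parameter, of the same order as the maximum, so only the outer loop produces a genuine counting factor. Once this reduction is established, the four contributions combine into \(\bigl(E_T+E_{NT}+E_{T,\partial}+E_{NT,\partial}\bigr)\norm{u}_{L^{2}(\Omega)}\norm{v}_{L^{2}(\Omega)}\), as asserted.
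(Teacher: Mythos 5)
Your initial decomposition into element--element and element--edge contributions and the introduction of the per-pair local errors \(E^{i,j}_{K\times\tilde K}\), \(E^{i,j}_{K\times e}\) coincide with the paper's. But the way you globalize the nodal coefficients and the counting step for non-touching pairs diverge from the paper's argument, and the latter step is a genuine gap.

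The paper never replaces \(\norm{u}_{L^2(K)}\) by \(\norm{u}_{L^2(\Omega)}\) at the per-pair stage. Instead it applies Cauchy--Schwarz to the inner sums, giving
\(\abs{E_{K\times\tilde K}(u,v)}\leq\bigl(\max_{i,j}\abs{E^{i,j}_{K\times\tilde K}}\bigr)\abs{\mathcal I_{K\times\tilde K}}\bigl(\sum_i\abs{u_i}^2\bigr)^{1/2}\bigl(\sum_j\abs{v_j}^2\bigr)^{1/2}\),
and then relates \(\sum_{i\in\mathcal I_{K\times\tilde K}}\abs{u_i}^2\) to \(h_K^{-d}\norm{u}^2_{L^2(K)}+h_{\tilde K}^{-d}\norm{u}^2_{L^2(\tilde K)}\), which \emph{preserves the locality}. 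After pulling out the maximum over pairs of \(\bigl(\max_{i,j}\abs{E^{i,j}}\bigr)\max\{h_K^{-d},h_{\tilde K}^{-d}\}\), the remaining sum \(\sum_K\sum_{\tilde K}\norm{u}_{L^2(K\cup\tilde K)}\norm{v}_{L^2(K\cup\tilde K)}\) is controlled purely combinatorially: another Cauchy--Schwarz shows \(\sum_K\sum_{\tilde K}\norm{u}^2_{L^2(K\cup\tilde K)}\leq 2\abs{\mathcal P_h}\norm{u}^2_{L^2(\Omega)}=\mathcal O(n)\norm{u}^2_{L^2(\Omega)}\). This single estimate produces the factor \(n\) for \emph{all} element--element pairs simultaneously, touching and non-touching alike; the analogous bound \(\sum_K\sum_e\norm{u}_{L^2(K)}\norm{v}_{L^2(K)}\leq\abs{\partial\mathcal P_h}\norm{u}_{L^2(\Omega)}\norm{v}_{L^2(\Omega)}=\mathcal O(n^{1/2})\norm{u}_{L^2(\Omega)}\norm{v}_{L^2(\Omega)}\) gives the \(n^{1/2}\) in the edge terms. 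No decay-in-distance argument is required, and nothing is demanded of the quadrature orders.

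The gap in your write-up is the step where you try to beat the apparent \(\mathcal O(n^2)\) count of non-touching element--element pairs by claiming that the decay in \(\tilde\rho_2^{-2k_{NT}}\) concentrates the inner sum. First, the justification you offer (``constants depending on the cluster admissibility parameter'') is misattributed: the cluster admissibility parameter governs the far-field compression of the cluster method, not the decay of the Gauss-quadrature error, which comes from \(\tilde\rho_2(K,\tilde K)=\rho_2\max\{d_{K,\tilde K}/h_K,1\}\). Establishing the concentration would require a separate estimate that trades the number of elements at each dyadic distance against the power of \(d_{K,\tilde K}/h_K\), and the resulting constant would depend on \(s\), on \(k_{NT}\), and (on locally refined meshes) on the mesh grading — whereas the constant in the theorem does not. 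Second, and more importantly, the difficulty you are fighting is self-inflicted: once you bound \(\abs{u_i}\leq Ch_K^{-1}\norm{u}_{L^2(\Omega)}\), you discard exactly the local \(L^2\) information that the paper exploits to collapse the double sum to \(\mathcal O(n)\). If you keep the per-pair bound in terms of \(\norm{u}_{L^2(K\cup\tilde K)}\norm{v}_{L^2(K\cup\tilde K)}\) and only globalize at the very end, the concentration argument becomes unnecessary and the stated bound follows with a mesh- and \(s\)-independent constant.
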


Based on the error estimates of \Cref{thm:quadError}, the optimal quadrature orders in order to allow convergence of up to order \(n^{-\alpha}\) should be chosen as
\begin{align*}
  k_{T}\left(K,\tilde{K}\right)
  &\geq \frac{\left(\alpha/2+1/2\right)\log n +s\abs{\log h_{K}}}{\log(\rho_{1})}-C,\\
  k_{NT}\left(K,\tilde{K}\right)
  &\geq \max\left\{\frac{\left(\alpha/2+1/2\right)\log n +(s-1)\abs{\log h_{\tilde{K}}} + \max\left\{\abs{\log h_{K}},\abs{\log h_{\tilde{K}}}\right\}- s\log\frac{d_{K,\tilde{K}}}{h_{\tilde{K}}} - C}{\log_{+}\frac{d_{K,\tilde{K}}}{h_{K}} + \log(\rho_{2})}, \right.\\
  &\left.\qquad\frac{\left(\alpha/2+1/2\right)\log n +(s-1)\abs{\log h_{K}} + \max\left\{\abs{\log h_{K}},\abs{\log h_{\tilde{K}}}\right\}- s\log\frac{d_{K,\tilde{K}}}{h_{K}} - C}{\log_{+}\frac{d_{K,\tilde{K}}}{h_{\tilde{K}}} + \log(\rho_{2})}
    \right\}\\
  k_{T,\partial}\left(K,e\right)
  &\geq \frac{\left(\alpha/2+1/4\right)\log n + s\abs{\log h_{K}}}{\log(\rho_{3})}-C,\\
  k_{NT,\partial}\left(K,e\right)
  &\geq \max\left\{\frac{\left(\alpha/2+1/4\right)\log n +(s-1)\abs{\log h_{e}} + \max\left\{\abs{\log h_{K}},\abs{\log h_{e}}\right\}- s\log\frac{d_{K,e}}{h_{e}} - C}{\log_{+}\frac{d_{K,e}}{h_{K}} + \log(\rho_{4})}, \right.\\
  &\left.\qquad\frac{\left(\alpha/2+1/4\right)\log n +(s-1)\abs{\log h_{K}} + \max\left\{\abs{\log h_{K}},\abs{\log h_{e}}\right\}- s\log\frac{d_{K,e}}{h_{K}} - C}{\log_{+}\frac{d_{K,e}}{h_{e}} + \log(\rho_{4})}
    \right\}.
\end{align*}

\begin{proof}[Proof of \Cref{thm:quadError}]
  Let the quadrature rules for the pairs \(K\times\tilde{K}\) and \(K\times e\) be denoted by \(a^{K\times\tilde{K}}_{Q}\left(\cdot,\cdot\right)\) and \(a^{K\times e}_{Q}\left(\cdot,\cdot\right)\).
  Set
  \begin{align*}
    E_{K\times\tilde{K}}^{i,j} &= a^{K\times\tilde{K}}\left(\phi_{i},\phi_{j}\right)-a_{Q}^{K\times\tilde{K}}\left(\phi_{i},\phi_{j}\right),\\
    E_{K\times e}^{i,j} &= a^{K\times e}\left(\phi_{i},\phi_{j}\right)-a_{Q}^{K\times e}\left(\phi_{i},\phi_{j}\right).
  \end{align*}
  For \(u,v\in V_{h}\), we set
  \begin{align*}
    E_{K\times\tilde{K}}(u,v)&= \sum_{i\in\mathcal{I}_{K\times\tilde{K}}} \sum_{j\in\mathcal{I}_{K\times\tilde{K}}} u_{i}v_{j}E_{K\times\tilde{K}}^{i,j}, \\
    E_{K\times e}(u,v)&= \sum_{i\in\mathcal{I}_{K}} \sum_{j\in\mathcal{I}_{K}} u_{i}v_{j}E_{K\times e}^{i,j}
  \end{align*}
  so that
  \begin{align*}
    \abs{E_{K\times\tilde{K}}(u,v)}
    &\leq \left(\max_{i,j}\abs{E_{K\times\tilde{K}}^{i,j}}\right) \sum_{i\in\mathcal{I}_{K\times\tilde{K}}} \abs{u_{i}} \sum_{j\in\mathcal{I}_{K\times\tilde{K}}} \abs{v_{j}} \\
    &\leq \left(\max_{i,j}\abs{E_{K\times\tilde{K}}^{i,j}}\right) \abs{\mathcal{I}_{K\times\tilde{K}}} \sqrt{\sum_{i\in\mathcal{I}_{K\times\tilde{K}}} \abs{u_{i}}^{2}} \sqrt{\sum_{j\in\mathcal{I}_{K\times\tilde{K}}} \abs{v_{j}}^{2}}, \\
    \abs{E_{K\times e}(u,v)}
    &\leq \left(\max_{i,j}\abs{E_{K, e}^{i,j}}\right) \sum_{i\in\mathcal{I}_{K}} \abs{u_{i}} \sum_{j\in\mathcal{I}_{K}} \abs{v_{j}} \\
    &\leq \left(\max_{i,j}\abs{E_{K, e}^{i,j}}\right) \abs{\mathcal{I}_{K}} \sqrt{\sum_{i\in\mathcal{I}_{K}} \abs{u_{i}}^{2}} \sqrt{\sum_{j\in\mathcal{I}_{K}} \abs{v_{j}}^{2}}
  \end{align*}
  Since
  \begin{align*}
    \sum_{i\in\mathcal{I}_{K\times\tilde{K}}} \abs{u_{i}}^{2}
    &\leq C\left[h_{K}^{-d}\int_{K}u^{2} + h_{\tilde{K}}^{-d}\int_{\tilde{K}}u^{2}\right], \\
    \sum_{i\in\mathcal{I}_{K}} \abs{u_{i}}^{2}
    &\leq C h_{K}^{-d}\int_{K}u^{2},
  \end{align*}
  we find
  \begin{align*}
    \abs{a(u,v)-a_{Q}(u,v)}
    &\leq \sum_{K}\sum_{\tilde{K}} \abs{E_{K\times\tilde{K}}(u,v)} + \sum_{K}\sum_{e} \abs{E_{K\times e}(u,v)} \\
    &\leq C\sum_{K}\sum_{\tilde{K}} \left(\max_{i,j}\abs{E_{K\times\tilde{K}}^{i,j}}\right) \max\left\{h_{K}^{-d},h_{\tilde{K}}^{-d}\right\} \left[\norm{u}_{L^{2}(K)}^{2} + \norm{u}_{L^{2}(\tilde{K})}^{2}\right]^{1/2} \\
    &\qquad\qquad \left[ \norm{v}_{L^{2}(K)}^{2} + \norm{v}_{L^{2}(\tilde{K})}^{2}\right]^{1/2} \\
    &\quad + C\sum_{K}\sum_{e} \left(\max_{i,j}\abs{E_{K\times e}^{i,j}}\right)  h_{K}^{-d}\norm{u}_{L^{2}(K)} \norm{v}_{L^{2}(K)} \\
    &\leq C \left(\max_{K,\tilde{K}}\max_{i,j}\abs{E_{K\times\tilde{K}}^{i,j}} \max\left\{h_{K}^{-d},h_{\tilde{K}}^{-d}\right\}\right) \sum_{K}\sum_{\tilde{K}} \norm{u}_{L^{2}(K\cup\tilde{K})} \norm{v}_{L^{2}(K\cup\tilde{K})} \\
    &\quad + C \left(\max_{K,e}\max_{i,j}\abs{E_{K\times e}^{i,j}}h_{K}^{-d}\right) \sum_{K}\sum_{e} \norm{u}_{L^{2}(K)} \norm{v}_{L^{2}(K)}.
  \end{align*}
  Because
  \begin{align*}
    \sum_{K}\sum_{\tilde{K}} \norm{u}_{L^{2}(K\cup\tilde{K})} \norm{v}_{L^{2}(K\cup\tilde{K})}
    &\leq \sqrt{\sum_{K}\sum_{\tilde{K}} \norm{u}_{L^{2}(K\cup\tilde{K})}^{2}} \sqrt{\sum_{K}\sum_{\tilde{K}} \norm{v}_{L^{2}(K\cup\tilde{K})}^{2}} \\
    &\leq 2\abs{\mathcal{P}_{h}} \norm{u}_{L^{2}(\Omega)} \norm{v}_{L^{2}(\Omega)}\\
    &\leq Cn \norm{u}_{L^{2}(\Omega)} \norm{v}_{L^{2}(\Omega)}
      \intertext{and}
      \sum_{K}\sum_{e} \norm{u}_{L^{2}(K)} \norm{v}_{L^{2}(K)}
    &\leq \abs{\partial\mathcal{P}_{h}} \norm{u}_{L^{2}(\Omega)} \norm{v}_{L^{2}(\Omega)}\\
    &\leq Cn^{(d-1)/d} \norm{u}_{L^{2}(\Omega)} \norm{v}_{L^{2}(\Omega)},
  \end{align*}
  we obtain
  \begin{align*}
    \abs{a(u,v)-a_{Q}(u,v)}
    &\leq C \left[n\left(\max_{K,\tilde{K}}\max_{i,j}\abs{E_{K\times\tilde{K}}^{i,j}} \max\left\{h_{K}^{-d},h_{\tilde{K}}^{-d}\right\}\right) \right.\\
    &\qquad\left.+ n^{(d-1)/d}\left(\max_{K,e}\max_{i,j}\abs{E_{K\times e}^{i,j}} h_{K}^{-d}\right)\right] \norm{u}_{L^{2}(\Omega)} \norm{v}_{L^{2}(\Omega)}.
  \end{align*}
  For \(d=2\), using \Cref{thm:localError} stated below permits to conclude.
\end{proof}


\bibliographystyle{elsarticle-harv}
\bibliography{bibtex/papers}

\end{document}